\newtheorem{lemma}{Lemma} 
\newtheorem{theorem}{Theorem} 
\newtheorem{definition}{Definition} 
\newtheorem{corollary}{Corollary} 
\newtheorem{example}{Example}
\newcommand{\opt}{\mathrm{opt}}
\newcommand{\non}{\mathrm{non}}
\begin{document}
\title{A Unified Markov Chain Approach to Analysing Randomised Search Heuristics }
 
\author{Jun He \thanks{Department of Computer Science, Aberystwyth University, Aberystwyth, SY23 3DB, U.K.} 
\and Feidun He  
\and Xin Yao   } 
 
\date{\today}

\maketitle
\begin{abstract}
The convergence,   convergence rate  and  expected hitting time  play fundamental roles in the  analysis of randomised search heuristics. This paper presents a unified   Markov chain approach to studying  them.   Using the approach,  the sufficient and necessary conditions of convergence in distribution are established. Then the average convergence rate is introduced to randomised search heuristics and its lower and upper bounds are derived. Finally, novel average  drift analysis and backward drift analysis are proposed for bounding the   expected hitting time. A computational study is also conducted to investigate the convergence,   convergence rate  and  expected hitting time. The theoretical study belongs to a prior and general study while the computational  study belongs to a posterior and case study.
\end{abstract}

\section{Introduction}
Randomised search heuristics, such as evolutionary algorithms,   have been  widely applied to optimization problems. Randomised search heuristics  belong to   iterative methods.  As iterative methods,   the following three  questions are fundamental in both theory and practice. 
\begin{enumerate}
\item   (\emph{Convergence}) whether is a randomised search heuristic able to find an optimal solution eventually?
\item  (\emph{Convergence rate}) how fast does a randomised search heuristic converge to the optimal set per iteration?  

\item  (\emph{Hitting time}) how many iterations are needed for obtaining an optimal solution?

\end{enumerate}

Most randomised search heuristics   satisfy the Markov property, that is,  a   population sequence (where a population consists of one or more solutions) is generated subject to some probability distribution; and  the state of current population decides the state of next population in a probabilistic way. 
Hence Markov chain theory provides a theoretical framework for analysing   randomised search heuristics~\cite{rudolph1998finite,he2003towards},

Based on absorbing Markov chain theory~\cite{kemeny1960finite,grinstead1997introduction},   a unified approach is used for studying the convergence, convergence rate and   expected hitting time of randomised search heuristics in this paper. The idea  is described as follows: the population sequence generated by a randomised search heuristic is modelled by an absorbing Markov chain. Consider the probability distribution of a population in the   non-optimal solution set and represent it by a vector. Then the randomised search    is  equivalent to  a  matrix iteration. The vector 1-norm is chosen to measure the distance between a population and the  optimal solution set. This  new feature  makes our current analysis  different from previous work \cite{rudolph1994convergence,suzuki1995markov,he1999convergence}.  Using matrix iteration analysis~\cite{varga2009matrix,meyer2000matrix},  all theoretical results can be established in a unified manner.

The purpose of this paper is to seek new theoretical tools for analysing randomised search heuristics. Indeed we have developed three new tools in the paper, which are  novel average  drift analysis  and novel backward drift analysis for bounding the   expected hitting time;   and the average convergence rate  of randomised search heuristics.  These new tools are seldom studied before.

This paper is organised as follows: literature  review  is given in Section \ref{secReview}. The   Markov chain model appears in Section~\ref{secMakovChains}.   Convergence is analysed in Section~\ref{secConvergence}.  The average convergence rate of EAs is discussed in Section~\ref{secConvergenceRate}. The expected hitting time is analysed in Section~\ref{secHittingTimes}.  Final conclusions are described in Section~\ref{secConclusion}.

\section{Literature Review}
\label{secReview}
In this section we  review the work related to Markov chain analysis for randomised search heuristics,  and also show the difference between our work and previous ones. 

Markov chains have been applied into analysing  randomised search heuristics  more than two decades~\cite{goldberg1987finite,eiben1991global}. \cite{rudolph1998finite} gave a survey of  Markov chain analysis of evolutionary algorithms  up to 1998. \cite{oliveto2007time}   reviewed some achievements  after that year.  

Markov chain theory provides a theoretical framework to model randomised search heuristics. For example, \cite{nix1992modeling,davis1993markov,dejong1995using} modelled genetic algorithms by Markov chains. \cite{cantu2000markov} presented Markov chain models of parallel genetic algorithms.  In general, randomised search heuristics for discrete optimisation may be modelled by Markov chains, and randomised search heuristics for continuous optimisation by Markov processes. 

Markov chain theory is widely applied to the    limit behaviour of randomised search heuristics and their convergence. For example,   \cite{fogel1994asymptotic,rudolph1994convergence,he2001conditions}   proposed  different convergent conditions of genetic algorithms.  \cite{rudolph2000convergence} analysed convergence properties of some multi-objective evolutionary algorithms. \cite{chakraborty1996analysis} compared  various selection algorithms using Markov chains.
 
The   convergence rate of randomised search heuristics is a less studied topic. \cite{suzuki1995markov} analysed  a simple GA by evaluating the eigenvalues of the transition matrix of the Markov chain and computed its convergence rate.   \cite{he1999convergence} gave the convergence rates of general genetic algorithms by using the minorization
condition.  \cite{du2010convergence} studied the convergence rates of gene expression programming    by means of Markov chain and spectrum analysis. 

The   expected hitting time study  has received more  attentions recently.  \cite{he2001drift,droste2002analysis} made two initial discussions on  evolutionary algorithms. According to  absorbing Markov chain theory,  the  expected hitting time can be calculated based on the fundamental matrix. Using this approach, \cite{he2002individual}  compared  the expected hitting time of (1 + 1)   and (N + N) evolutionary algorithms.  \cite{he2003towards} gave a framework for analysing  the expected hitting time  of evolutionary algorithms. \cite{zhou2009comparative}   compared the runtime of three simple heuristic algorithms.  

However, the fundamental matrix approach can only be suitable for simple algorithms and problems. Thus  \cite{he1998study,he2001drift} introduced drift analysis  to the expected hitting time study. Currently drift analysis becomes a popular theoretical tool. Different variants have been developed, such as simplified drift analysis \cite{oliveto2011simplified}, multiplicative drift analysis   \cite{doerr2010multiplicative},   adaptive drift analysis     \cite{doerr2010adaptive},  and variable drift  \cite{mitavskiy2009theoretical,johannsen2010random}.  Drift analysis have been applied to  both  (1+1) EAs \cite{witt2012optimizing} and population-based EAs \cite{lehre2010negative}. Drift theorems can be established using either Markov chain theory or super-martingale theory~\citep{he2001drift,neumann2009analysis}.

So far the convergence,    convergence rate  and expected hitting time  are   studied separately. Different from existing work,  we present  a unified  approach  to bringing these three issues together.  

\section{Absorbing Makrov Chain Model}
\label{secMakovChains}
\subsection{Theoretical Study}
In this subsection, we describe absorbing Markov chain models of randomised search heuristics. 

Consider a maximisation problem on a finite state   space, that is 
\begin{equation}
   \max\,  f(x),   \qquad x \in D,
\end{equation}
where $x$ is a variable and $D$ is its definition domain, a finite set. $f(x)$ is called a \emph{fitness function}. 
 
A randomised search heuristic   can be viewed as a  randomised  iteration process: initially construct a population of solutions $\Phi_0$;  based on   $\Phi_0$, then probabilistically generate a new population of solutions $\Phi_1$;  based on $\Phi_1$, then  probabilistically generate a new population of solutions $\Phi_2$, and so on. This procedure is repeated until a stopping criterion is satisfied. Then a sequence of populations is produced
$$\Phi_0 \to \Phi_1 \to \Phi_2 \to \cdots.$$  

In order to guarantee that the best solution is always kept during the iteration, an extra archive  is added for recording the best found solution.   The archive itself is not involved in  generating   new  solutions. This strategy  is   called \emph{elitist}.
A  randomised search heuristic with an archive  is described in Algorithm \ref{alg1}.

\begin{algorithm}
\caption{randomised search heuristic with an archive} \label{alg1}
\begin{algorithmic}[1]
\STATE set counter  $t$ to 0; 
\STATE initialize a population of solutions $ \Phi_{0}$;
\STATE archive $\Lambda_0$  keeps the best solution in $\Phi_0$; 
\FOR{$t=0,1,2, \cdots$}
\STATE a new population of solutions $\Phi_{t+1}$ is generated from $\Phi_t$;
\STATE  update the archive $\Lambda_{t+1}$ if the best solution in $\Phi_{t+1}$ is better than $\Lambda_t$; 
\STATE  counter  $t$ is increased by $1$;
\ENDFOR
\end{algorithmic}
\end{algorithm}

A \emph{population} consists of one or more solutions while a solution is called an  \emph{individual}. The procedure of generating new a population may include several steps, such as  mutation, crossover and selection in a genetic algorithm.  For convenience of analysis, we only consider   non-dynamical algorithms whose search operators are not changed during the iteration. 
The algorithm  runs for ever. This assumption is taken for  convenience of   analysis of the hitting time.     
The fitness of a  population at the $t$-th iteration is defined by  the archive at the $t$-th iteration, denoted by $f_t$.

The sequence $\{ \Phi_t; t=0,1, \cdots \}$ can be formulated by a Markov chain. Let $X$ and $Y$ be two populations. The transition from $X$ to $Y$ happens with a probability $P(X,Y)$, that is,  
\begin{align}
P(X,Y):=P(\Phi_{t+1}= Y \mid  \Phi_t = X), \quad  X , Y \in S,
\end{align}
where $S$ denotes the set of all populations. Both $\Phi_t$ and $X$ represent a population, but   $\Phi_t$ is a random variable for representing the population at the $t$-th iteration; $X$  its value, or called a \emph{state} in the population space.  

For the sake of argument, we introduce an auxiliary Markov chain $\{\Phi'_t; t=0,1, \cdots\}$ as follow. Let $\Phi'_{t}  =\Phi_t$ before the optimal solution is found for the first time.  Once   an optimal solution is found at the $t$-th iteration, then assign
$
  \Phi'_{s}  =\Phi_t$ for any  iteration $s$ after $t$.
This implies the optimal set is always absorbing in the new chain $\{\Phi'_t; t=0,1, \cdots\}$. We don't care about the behaviour of randomised search heuristics after the first time to hit a optimal solution. To simplify notation, we still denote the new chain by $\{\Phi_t; t=0,1, \cdots\}$.

As a result, the population sequence $\{ \Phi_t; t=0,1, \cdots \}$   is modelled by  a homogeneous  Markov chain where the optimal solution set is always absorbing.

\subsection{Case Studies}
In this subsection, we show how randomised heuristics can be easily modelled by Markov chains. For the sake of illustration, we  consider  a simple maximisation problem  
\begin{align}
\label{equFunction}
&\max \, f(x), &x \in \{ 0,1, \cdots, 100 \}
\end{align}
 
 Two randomised search heuristics are applied to the above problem.
The first   algorithm  adopts random walk with elitist selection, denoted by RSH-I (see Algorithm \ref{alg2}).

\begin{algorithm}[ht]
\caption{RSH-I}
\label{alg2}  
\begin{algorithmic} 
\STATE   \textbf{Random Walk}: provided that $\Phi_t=x$, then  it walks  to $x-1$ (if $x-1$ is not less than $0$) with   probability $0.01$, or walks   to $x+1$ (if $x+1$ is not more than $100$)  with  probability $0.01$. Denote the new position   by $\Phi_{t+1/2}$.

\STATE \textbf{Elitist Selection:} if $f(\Phi_{t+1/2}) >f(\Phi_t)$, then let $\Phi_{t+1} \leftarrow \Phi_{t+1/2}$; otherwise $\Phi_{t+1} \leftarrow\Phi_{t}$.
\end{algorithmic}
\end{algorithm}
 
Th sequence $\{\Phi_t; t=0,1, \cdots \}$ is a Markov chain and its transition probabilities are given as follows:  for any $x,y \in \{0, 1, \cdots, 100 \}$ which are not   in the optimal solution set, 
\begin{align*}
P(x,y) =\left\{
\begin{array}{llll}
0.01, &\mbox{if } y=x-1   \mbox{ and } f(y)>f(x);\\
0.01, &\mbox{if } y=x+1   \mbox{ and } f(y)>f(x);\\
p, &\mbox{if } y=x;\\
0, &\mbox{otherwise}.
\end{array}
\right.
\end{align*}
where
$p=1-P(x,x-1)-P(x,x+1)$.

The second   algorithm  adopts random walk with non-elitist selection, denoted by RSH-II (see Algorithm \ref{alg3}). Different from RSI-I, RSH-II allows a worse child to be accepted.  

\begin{algorithm}[ht] 
\caption{RSH-II} 
\label{alg3}
\begin{algorithmic} 
\STATE   \textbf{Random Walk}: provided that $\Phi_t=x$, then  it walks  to $x-1$ (if $x-1$ is not less than $0$) with  probability $0.01$, or walks   to $x+1$ (if $x+1$ is not more than $100$)  with  probability $0.01$. Denote the new position   by $\Phi_{t+1/2}$.

\STATE \textbf{Non-elitist Selection:} if $f(\Phi_{t+1/2}) >f(\Phi_t)$, then definitely let $\Phi_{t+1} \leftarrow \Phi_{t+1/2}$; otherwise let $\Phi_{t+1} \leftarrow\Phi_{t+1/2}$ with  probability $0.5$.
\end{algorithmic}
\end{algorithm}

Th sequence $\{\Phi_t; t=0,1, \cdots \}$ is a Markov chain and its transition probabilities are given as follows: for any $x,y \in \{0, 1, \cdots, 100 \}$ which are not   in the optimal solution set, 
\begin{align*}
P(x,y) =\left\{
\begin{array}{llll}
0.01, &\mbox{if } y=x-1   \mbox{ and } f(y)>f(x);\\
0.005, &\mbox{if } y=x-1   \mbox{ and } f(y) \le f(x);\\
0.01, &\mbox{if } y=x+1   \mbox{ and } f(y)>f(x);\\
0.005, &\mbox{if } y=x+1   \mbox{ and } f(y) \le f(x);\\
p, &\mbox{if } y=x;\\
0, &\mbox{otherwise};
\end{array}
\right.
\end{align*}
where
$p=1-P(x,x-1)-P(x,x+1)$.

\section{Convergence in Distribution}
\label{secConvergence}

\subsection{Theory Study: Convergence Condition} 
In this subsection we define the convergence  in distribution  of randomised search heuristics and  establish the sufficient and necessary conditions of convergence in distribution.

Let $S_{\opt}$ denote the set of all populations which includes at least one  optimal solution, and $S_{\non}$ the set of all populations which doesn't include any optimal solution.
\begin{definition}
A  randomised search heuristic  is called \emph{convergence in distribution}  if   starting from any non-optimal population, the probability of $\Phi_t$ in the  optimal set  goes towards  $1$ as $t$ to the infinitely large. That is 
$$
\lim_{t \to +\infty} P(\Phi_t \in S_{\opt}) = 1.
$$
\end{definition}

We consider the probability  distribution of $\Phi_t$  in the non-optimal set. Let $q_t(X)$ denote the probability of $\Phi_t$ at a non-optimal state $X$, 
$$
q_t(X): =P(\Phi_t=X).
$$
Let  $(X_1, X_2, \cdots )$ represent all populations in the non-optimal set. Then
the vector\footnote{Notation  $\mathbf{v}$ represents a column vector and $\mathbf{v}^T$ the row column with the transpose operation.} $\mathbf{q}_t$  
$$\mathbf{q}_t:= (q_t(X_1), q_t(X_2), \cdots   )^T
$$ 
  denote   the probability distribution of  $\Phi_t$ over all non-optimal populations.

 Notice that   the  vector 1-norm\footnote{Given an $n \times n$   matrix $\mathbf{A}=[a_{i,j}]$, its $1$-norm is    $\parallel \mathbf{A} \parallel_1 = \max_{1 \le j \le n}  \sum _{i=1}^n \mid a_{ij} \mid$. Given a vector $\mathbf{v}$, its $1$-norm is
$\sum_{i=1}^n \mid v_i\mid $.} $\parallel \mathbf{q}_t  \parallel_1$  equals to  the population $\Phi_t$ in the non-optimal set, i.e.,   
$$ \parallel \mathbf{q}_t  \parallel_1 = P(  \Phi_{t} \in S_{\non}). 
$$
Thus the convergence of a randomised search heuristic  is rewritten in the norm form:
$$
\lim_{t \to +\infty}  \parallel \mathbf{q}_t  \parallel_1 = 0.
$$ 

The 1-norm  $\parallel \mathbf{q}_t  \parallel_1$   plays the role of the distance   between the population $\Phi_t$ and the optimal set.   
  This can be viewed as a special case of the analysis in \cite{he1999convergence}, where  a general vector norm has been used as the distance.

In the following we draw the convergence condition of randomised search heuristics based on the absorbing Markov chain theory. 

Let $\mathbf{P}$ denote the transition matrix  of the Markov chain associated with a randomised search heuristic, whose  entries are $P(X,Y),$ where $ X, Y \in S$.
Since a state in the optimal set  is always absorbing, so the  transition matrix $\mathbf{P} $  can be written in the   canonical form below,
\begin{equation}
\label{equCanonicalForm} 
\mathbf{P} = 
\begin{pmatrix}
  \mathbf{I} & \mathbf{O}\\
 \mathbf{R} & \mathbf{Q}\\
 \end{pmatrix},
\end{equation}
where $\mathbf{I}$ is a unit matrix and $\mathbf{O}$ a zero matrix.  $ \mathbf{Q}$ is a matrix to denote    probability transitions among non-optimal  populations.
  $\mathbf{R}$ is a matrix to represent  probability transitions from non-optimal populations to the optimal set.

According to    absorbing Markov chain theory~\cite[Chapter 11]{grinstead1997introduction},  
\begin{align*}
P(\Phi_{t+1}=Y) =\sum_{X \in S_{\non}} P(\Phi_t=X) P(X,Y),
\end{align*}
then  the  iteration 
$\Phi_t \to \Phi_{t+1}$  is represented in an  equivalent   matrix iteration,
\begin{equation}
\label{equMatrixIteration}
\mathbf{q}^T_{t+1}=   \mathbf{q}^T_t \mathbf{Q} =\mathbf{q}^T_0 \mathbf{Q}^t  .  
\end{equation}

From the sufficient and necessary condition of convergence of iterative methods (\cite[Theorem 1.10]{varga2009matrix}), it is straightforward to obtain the  convergence condition for  randomised search heuristics:
\begin{lemma}\label{lemConvergence} 
A randomised search heuristic  is convergent 
 if and only if the spectral radius\footnote{The spectral radius of a  square matrix  $\mathbf{A}$, denoted by $\rho(\mathbf{A})$, is  the supremum among the absolute values of all eigenvalues of $\mathbf{A}.$ }  $\rho(\mathbf{Q})<1.$
\end{lemma}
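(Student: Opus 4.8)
The plan is to reduce the statement to the classical fact that the powers of a finite square matrix vanish precisely when its spectral radius is below one, and then invoke \cite[Theorem 1.10]{varga2009matrix}. First I would restate convergence in the matrix-iteration language already set up: using the definition together with the identity $\parallel \mathbf{q}_t \parallel_1 = P(\Phi_t \in S_{\non})$, the heuristic is convergent if and only if $\lim_{t \to +\infty} \parallel \mathbf{q}_t \parallel_1 = 0$ for every non-optimal starting point, i.e. for every initial vector $\mathbf{q}_0 = \mathbf{e}_i$ (the point mass at a non-optimal state $X_i$). Since a non-optimal initial distribution is a convex combination of such $\mathbf{e}_i$, and $\mathbf{q}_0 \mapsto \mathbf{q}_0^T \mathbf{Q}^t$ is linear, it suffices to work with these coordinate vectors. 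Combined with $\mathbf{q}_t^T = \mathbf{q}_0^T \mathbf{Q}^t$, convergence is then equivalent to: every row of $\mathbf{Q}^t$ tends to the zero vector as $t \to +\infty$. Because $S$, hence $S_{\non}$, is finite, $\mathbf{Q}$ is a finite nonnegative (sub-stochastic) matrix, so "every row tends to $\mathbf{0}$" is the same as $\mathbf{Q}^t \to \mathbf{O}$ entrywise; and since all norms on a finite-dimensional space are equivalent, this is in turn the same as $\parallel \mathbf{Q}^t \parallel_1 \to 0$.

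Second, I would apply \cite[Theorem 1.10]{varga2009matrix}, which asserts that for a square matrix $\mathbf{Q}$ one has $\lim_{t \to +\infty} \mathbf{Q}^t = \mathbf{O}$ if and only if $\rho(\mathbf{Q}) < 1$; this delivers both directions of the lemma simultaneously. For concreteness one can also spell out the two implications directly: if $\rho(\mathbf{Q}) < 1$ then $\parallel \mathbf{Q}^t \parallel_1 \to 0$, so $\parallel \mathbf{q}_t \parallel_1 \le \parallel \mathbf{q}_0 \parallel_1 \, \parallel \mathbf{Q}^t \parallel_1 \to 0$ for every initial distribution; conversely, if the heuristic converges then testing against $\mathbf{q}_0 = \mathbf{e}_i$ for each non-optimal state $X_i$ forces the $i$-th row of $\mathbf{Q}^t$ to vanish, whence $\mathbf{Q}^t \to \mathbf{O}$ and therefore $\rho(\mathbf{Q}) < 1$.

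I do not expect a genuine obstacle here, since the lemma is essentially a restatement of a textbook result; the only points that need a little care are (i) justifying that it is enough to consider coordinate initial vectors rather than arbitrary non-optimal initial distributions, and (ii) being explicit that "$\parallel \mathbf{q}_t \parallel_1 \to 0$ for all $\mathbf{q}_0$" is genuinely equivalent to "$\mathbf{Q}^t \to \mathbf{O}$" and not merely implied by it. Once both are noted, \cite[Theorem 1.10]{varga2009matrix} closes the argument, and the finiteness of the state space is what lets us pass freely between entrywise convergence and convergence in the $1$-norm.
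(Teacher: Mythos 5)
Your proposal is correct and follows essentially the same route as the paper, which states the lemma as a direct consequence of \cite[Theorem 1.10]{varga2009matrix} without further proof. Your extra care in reducing "convergence from every non-optimal start" to $\mathbf{Q}^t \to \mathbf{O}$ via coordinate initial vectors simply fills in the routine details the paper leaves implicit.
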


The above sufficient and necessary condition is less useful in practice since it is too difficult to calculate the spectral radius   of the  transition matrix. Therefore we turn to find an equivalent condition which is much  easier to verify.  The following lemma gives such a condition.

\begin{lemma}
\label{lemConvergence2}
A  randomised search heuristic is convergent 
 if and only if   starting from any non-optimal population, it is possible to visit the optimal set after finite iterations. That is, there exists an integer  $k>0$ and for any non-optimal state $X$ and $t \ge 0$,   
\begin{align}
\label{equConvergence}
   P(  \Phi_{t+k} \in S_{\opt} \mid \Phi_t =X)
>0.
\end{align}
\end{lemma}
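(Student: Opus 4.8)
The plan is to reduce the reachability condition \eqref{equConvergence} to the spectral condition $\rho(\mathbf{Q})<1$ of Lemma~\ref{lemConvergence} and transfer between the two. First I would use homogeneity to strip off the parameter $t$: since the chain is homogeneous, $P(\Phi_{t+k}\in S_{\opt}\mid \Phi_t=X)=P(\Phi_k\in S_{\opt}\mid \Phi_0=X)$, so \eqref{equConvergence} is equivalent to the existence of a single $k>0$ with $P(\Phi_k\in S_{\opt}\mid \Phi_0=X)>0$ for every non-optimal $X$. Because $S_{\opt}$ is absorbing, the event $\{\Phi_k\in S_{\opt}\}$ coincides with ``$S_{\opt}$ is reached within $k$ iterations'', so this probability is non-decreasing in $k$; hence the uniform ``one common $k$'' version in the statement is equivalent to the per-state version ``from each non-optimal $X$ some finite number of iterations suffices'' (take $k$ to be the maximum of the finitely many per-state bounds — here the finiteness of $D$, hence of $S_{\non}$, is used). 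Finally, via the matrix iteration \eqref{equMatrixIteration} started from $\mathbf{q}_0$ equal to the indicator vector of $X$, one has $P(\Phi_k\in S_{\non}\mid \Phi_0=X)=\parallel \mathbf{q}_k\parallel_1=$ the $X$-th row sum of $\mathbf{Q}^k$, so \eqref{equConvergence} becomes: some power $\mathbf{Q}^k$ has all of its row sums strictly below $1$.

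For the ``if'' direction, assume such a $k$ exists. Since $S_{\non}$ is finite, $\delta:=1-\max_X P(\Phi_k\in S_{\non}\mid \Phi_0=X)>0$, so every row sum of $\mathbf{Q}^k$ is at most $1-\delta$. As the spectral radius is bounded above by any induced matrix norm, in particular by the maximum absolute row sum, $\rho(\mathbf{Q})^k=\rho(\mathbf{Q}^k)\le 1-\delta<1$, whence $\rho(\mathbf{Q})<1$ and Lemma~\ref{lemConvergence} yields convergence. (Equivalently, and staying in the paper's language, for any initial distribution $\mathbf{q}_0$ on $S_{\non}$ one gets $\parallel \mathbf{q}_{mk}\parallel_1\le(1-\delta)^m\parallel \mathbf{q}_0\parallel_1$ by iterating the Markov property, and since $\parallel \mathbf{q}_t\parallel_1$ is non-increasing in $t$ this forces $\parallel \mathbf{q}_t\parallel_1\to 0$.)

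For the ``only if'' direction, assume the heuristic is convergent, so $\rho(\mathbf{Q})<1$ by Lemma~\ref{lemConvergence}. Then $\mathbf{Q}^t\to\mathbf{O}$ entrywise as $t\to+\infty$ (Gelfand's formula, or \cite[Theorem 1.10]{varga2009matrix}). Because $S_{\non}$ is finite there are only finitely many row sums, so there is a $k$ for which every row sum of $\mathbf{Q}^k$ is $<1$; this is exactly $P(\Phi_k\in S_{\opt}\mid \Phi_0=X)>0$ for every non-optimal $X$, and homogeneity restores the ``for all $t\ge 0$'' form of \eqref{equConvergence}.

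The matrix bookkeeping is routine; the step that genuinely must be used — and the one to state carefully — is the finiteness of $S_{\non}$: it is what lets a single $k$ serve all non-optimal states at once, what upgrades ``each entry of $\mathbf{Q}^t\to 0$'' into ``some $\mathbf{Q}^k$ has all row sums $<1$'', and what makes $\delta$ strictly positive in the ``if'' direction. A secondary point worth spelling out is that $S_{\opt}$ being absorbing makes $t\mapsto P(\Phi_t\in S_{\non})$ non-increasing, which is what permits passing between ``in exactly $k$ iterations'' and ``within $k$ iterations'' and lets the bound at times $mk$ control all intermediate times.
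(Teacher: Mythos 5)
Your proposal is correct and follows essentially the same route as the paper: reduce to $t=0$ by homogeneity, translate condition \eqref{equConvergence} into a norm condition on a power of $\mathbf{Q}$, use ``spectral radius $\le$ induced norm'' to get $\rho(\mathbf{Q})<1$ for sufficiency, and use $\rho(\mathbf{Q})<1\Rightarrow\mathbf{Q}^t\to\mathbf{O}$ (the paper invokes Gelfand's formula directly) for necessity, with Lemma~\ref{lemConvergence} closing both directions. Your phrasing via row sums of $\mathbf{Q}^k$ is the same quantity as the paper's $\parallel(\mathbf{Q}^T)^k\parallel_1$, and your explicit appeal to the finiteness of $S_{\non}$ makes precise a step the paper leaves implicit.
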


\begin{proof}
Since the chain $\{ \Phi_t; t =0,1, \cdots\}$ is homogeneous, thus it is enough to prove  the case of $t=0$.

(i) The proof that the condition is sufficient.

Assume  (\ref{equConvergence}) holds. From 
$$
 P(  \Phi_{k} \in S_{\non} \mid \Phi_0 =X) +  P(  \Phi_{k} \in S_{\opt} \mid \Phi_0 =X) =1, 
$$
then it gives
$
 \parallel \mathbf{q}_k  \parallel_1 
 <  1
$ for any $ \parallel \mathbf{q}_0  \parallel_1 $ such that  $\parallel \mathbf{q}_0  \parallel_1 
 = 1$.

Then from the matrix iteration
$
 \mathbf{q}_k    =(\mathbf{Q}^T)^k \mathbf{q}_0,  
$
and the matrix 1-norm definition
\begin{align*}
\parallel (\mathbf{Q}^T)^k \parallel_1 
 =\max_{\parallel \mathbf{q}_0  \parallel_1 
 = 1} \parallel (\mathbf{Q}^T)^k \mathbf{q}_0 \parallel_1,
\end{align*} 
it follows
\begin{align*}
 \parallel (\mathbf{Q}^T)^k \parallel_1 
 <  1.
\end{align*}

Since the spectral radius of a matrix is never bigger than its consistent norm  \citep[Example 7.1.4]{meyer2000matrix}, the above inequality yields
\[
 \rho\left( (\mathbf{Q}^T)^k\right) \le \parallel  (\mathbf{Q}^T)^k \parallel_1 < 1,
\]
so that
$
 \rho(\mathbf{Q}) =\rho(\mathbf{Q}^T) <  1.
$
According to Lemma~\ref{lemConvergence}, this implies  that the algorithm is convergent.

(ii) The proof that the condition is necessary.

Suppose the algorithm is convergent, then according to Lemma \ref{lemConvergence}, $\rho(\mathbf{Q})=\rho(\mathbf{Q}^T)<1$. From Gelfand's spectral radius   formula\footnote{Gelfand's spectral radius   formula says that for any induced matrix norm: $\lim_{t \to +\infty }\parallel \mathbf{A}^k \parallel^{1/k}=\rho(\mathbf{A})$   \citep[Example 7.10.1]{meyer2000matrix}.}, there exists an integer $k>0$ such that 
\begin{equation}
\label{equMaximumNorm1}
\parallel (\mathbf{Q}^T)^k \parallel^{1/k}_1 <1.
\end{equation}
and then $\parallel (\mathbf{Q}^T)^k \parallel_1 <1.$

From  $
 \mathbf{q}_k    =(\mathbf{Q}^T)^k \mathbf{q}_0,  
$
it gives
$
 \parallel \mathbf{q}_k  \parallel_1 
 <  1.
$

From 
$$
 P(  \Phi_{k} \in S_{\non} \mid \Phi_0 =X) +  P(  \Phi_{k} \in S_{\opt} \mid \Phi_0 =X) =1, 
$$
and  
$$ \parallel \mathbf{q}_k  \parallel_1 = P(  \Phi_{k} \in S_{\non} \mid \Phi_0 =X), 
$$
It follows then that
$$
   P(  \Phi_k \in S_{\opt} \mid \Phi_0 =X)
>0,$$ 
and this proves (\ref{equConvergence}).
\end{proof}

From the above lemma, we can easily draw the following sufficient and necessary condition, which is much easier to verify in practice.
 \begin{theorem}
\label{theConvergence}
A  randomised search heuristic is convergent 
 if and only if   starting from any non-optimal state, it is possible to reach  a better state after finite iterations. That is, there exists an integer  $k>0$ and for any non-optimal state $X$ and   $t \ge 0$,   
\begin{align}
   P( f_{t+k} >f_t \mid \Phi_t =X)>0.
\end{align}
\end{theorem}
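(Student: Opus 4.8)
The plan is to deduce the theorem from Lemma~\ref{lemConvergence2} by showing that ``reaching a strictly larger archive fitness within a bounded number of iterations'' and ``reaching the optimal set within a bounded number of iterations'' are equivalent conditions. As in the proof of that lemma, homogeneity of the chain $\{\Phi_t\}$ lets me fix $t=0$ throughout.

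\emph{Necessity.} Suppose the heuristic is convergent. By Lemma~\ref{lemConvergence2} there is a uniform integer $k>0$ with $P(\Phi_k \in S_{\opt} \mid \Phi_0 = X)>0$ for every non-optimal state $X$. Writing $f^\ast=\max_{x\in D} f(x)$, a non-optimal $X$ has archive fitness $f_0 < f^\ast$, whereas $\Phi_k \in S_{\opt}$ forces $f_k = f^\ast$. Hence the event $\{\Phi_k \in S_{\opt}\}$ is contained in $\{f_k > f_0\}$, and $P(f_k > f_0 \mid \Phi_0 = X)>0$ follows with the same $k$.

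\emph{Sufficiency.} Suppose a $k>0$ exists such that from every non-optimal state the archive fitness strictly increases within $k$ iterations with positive probability. Since $D$ is finite, $f$ takes only $m := |f(D)|$ distinct values. Starting from a non-optimal $X$, with positive probability $f_k > f_0$; because the archive is elitist, $f_t$ is non-decreasing, so this gain is never lost. If the state reached at step $k$ is still non-optimal, apply the hypothesis again: by the Markov property and homogeneity the two positive transition probabilities multiply, and with positive probability $f_{2k} > f_k$. Iterating, each round strictly increases $f$ by at least one value of $f(D)$, so after at most $m-1$ rounds the chain must occupy a maximal-fitness state, i.e. a state in $S_{\opt}$ (and it stays there, since $S_{\opt}$ is absorbing). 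Hence $P(\Phi_{(m-1)k} \in S_{\opt}\mid \Phi_0 = X) > 0$, which is precisely condition~(\ref{equConvergence}) of Lemma~\ref{lemConvergence2} with the integer $(m-1)k$ in place of $k$; that lemma then yields convergence.

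The main obstacle is the bookkeeping in the sufficiency part: one must turn the chain of per-round ``improvement events'' into a single positive-probability event that lands in $S_{\opt}$ after a \emph{fixed} number of steps. This is exactly where the elitist archive and the finiteness of $f(D)$ enter — monotonicity of $f_t$ guarantees an improvement is never undone, and $|f(D)|<\infty$ caps the number of improvements at $m-1$, producing the uniform horizon $(m-1)k$. A minor point to check is that the $k$ supplied by the hypothesis is uniform over all non-optimal states (as the statement asserts), so the product of the per-round bounds remains one fixed integer.
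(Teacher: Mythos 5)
Your proof is correct and takes exactly the route the paper intends: the paper states this theorem as an immediate consequence of Lemma~\ref{lemConvergence2} without spelling out the argument, and your two directions supply the omitted details — the elitist archive makes $f_t$ non-decreasing, $\Phi_k\in S_{\opt}$ forces $f_k$ to the maximum (necessity), and chaining at most $|f(D)|-1$ improvement events via the Markov property and absorption of $S_{\opt}$ yields condition~(\ref{equConvergence}) with the uniform horizon $(m-1)k$ (sufficiency). The only imprecision, that $f_t$ is determined by the archive rather than by $\Phi_t$ alone, is inherited from the paper's own formulation and not a defect of your argument.
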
 

\subsection{Case Study}
In this subsection, we show how the convergence conditions are applied to determine the convergence of a randomised search heuristic.

\begin{example}
Consider RSH-I and RSH-II for solving the following maximisation problem, 
\begin{align}
& \max  \, f(x), &x \in \{ 0,1, \cdots, 100 \}.
\end{align} 
\end{example}

RSH-I does not converge if there exists a state $x$ such that $f(x)>f(x-1)$ and $f(x)>f(x+1)$. This means $f(x)$ is a multi-modal function. At the local optimum $x$, the algorithm cannot make any move.

RSH-II always converges. It is easy to verify that from any state $X$, the algorithm can make an improvement at most 100 iterations with a positive probability, thus according to  Theorem~\ref{theConvergence}, the algorithm is not convergent.
 
\begin{example}
Consider RSH-I for solving the following maximisation problem  
\begin{align}
& \max  \, x^2, &x \in \{ 0,1, \cdots, 100 \}.
\end{align} 
It is a unimodal function with the optimum at $100$.
\end{example}

The corresponding transition matrix   $\mathbf{Q}$ is
\begin{equation}
\mathbf{Q} =  \begin{pmatrix} 
 0.99 & 0 & \cdots & 0 & 0\\
  0.01 & 0.99& \cdots & 0 & 0\\
  \vdots & \vdots & \vdots& \vdots & \vdots  \\
  0 & 0& \cdots & 0.99 & 0\\
  0 & 0& \cdots & 0.01 & 0.99\\
 \end{pmatrix}.
\end{equation}
 
We can prove the convergence of RSH-I using Lemma~\ref{lemConvergence}. Since $\rho(\mathbf{Q})=0.99<1$, so RSH-I for maximising $f_1(x)$ is convergent. 

We also can prove   the convergence of RSH-I using Theorem~\ref{theConvergence} without calculating the spectral radius. It is easy to see that the probability of obtaining a better child is   $0.01$, thus according to  Theorem~\ref{theConvergence}, the algorithm is convergent.

Comparing with the two approaches,  we see that Theorem~\ref{theConvergence} provides a faster approach to determine the convergence.

\subsection{Computational Study}
In this subsection, we propose a computational approach to illustrating  the convergence in distribution of randomised search heuristics. 

In the computation study, we run an algorithm for $k$ times. Let $n (\Phi_t \in S_{\opt})$ denote the number of  $\Phi_t$  (where $t=0,1, \cdots $) appearing in the optimal set for these $k$ runs.  

According to the law of large numbers, the probability $P(\Phi_t \in S_{\opt})$  will be approximated by the relative frequency as follows:
$$
\frac{n (\Phi_t \in S_{\opt})}{k}, \mbox{ when } k \to +\infty.
$$

The above frequency is used  as  the probability $P(\Phi_t \in S_{\opt})$ in the the computational study.

\begin{example}
Consider RSH-II for solving the following problem   
\begin{align}
&\max \, (x-49)^2, \quad x \in \{ 0,1, \cdots, 100 \}.
\end{align}
It is a two-modal function, with one local optimum at $0$ and one global optimum at  $100$. 
\end{example}

We apply   RSH-I to the  problem. 
Run the algorithm for 100,000 times. The initial population is $\Phi_0=20$.  

  Figure~\ref{f3-convergence2} shows the probability of $\Phi_t$ in the optimal set is 0 when $t\le 100,000$. In other words, no convergence happens yet in $100,000$ iterations.  
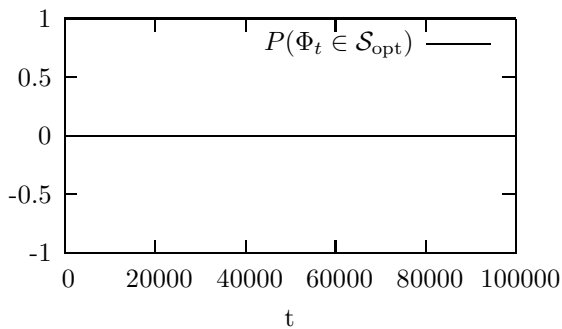
\begin{figure}[ht]
\begin{center}
\setlength{\unitlength}{0.240900pt}
\ifx\plotpoint\undefined\newsavebox{\plotpoint}\fi
\begin{picture}(900,540)(0,0)
\sbox{\plotpoint}{\rule[-0.200pt]{0.400pt}{0.400pt}}%
\put(130.0,131.0){\rule[-0.200pt]{4.818pt}{0.400pt}}
\put(110,131){\makebox(0,0)[r]{-1}}
\put(819.0,131.0){\rule[-0.200pt]{4.818pt}{0.400pt}}
\put(130.0,223.0){\rule[-0.200pt]{4.818pt}{0.400pt}}
\put(110,223){\makebox(0,0)[r]{-0.5}}
\put(819.0,223.0){\rule[-0.200pt]{4.818pt}{0.400pt}}
\put(130.0,315.0){\rule[-0.200pt]{4.818pt}{0.400pt}}
\put(110,315){\makebox(0,0)[r]{ 0}}
\put(819.0,315.0){\rule[-0.200pt]{4.818pt}{0.400pt}}
\put(130.0,407.0){\rule[-0.200pt]{4.818pt}{0.400pt}}
\put(110,407){\makebox(0,0)[r]{ 0.5}}
\put(819.0,407.0){\rule[-0.200pt]{4.818pt}{0.400pt}}
\put(130.0,499.0){\rule[-0.200pt]{4.818pt}{0.400pt}}
\put(110,499){\makebox(0,0)[r]{ 1}}
\put(819.0,499.0){\rule[-0.200pt]{4.818pt}{0.400pt}}
\put(130.0,131.0){\rule[-0.200pt]{0.400pt}{4.818pt}}
\put(130,90){\makebox(0,0){ 0}}
\put(130.0,479.0){\rule[-0.200pt]{0.400pt}{4.818pt}}
\put(272.0,131.0){\rule[-0.200pt]{0.400pt}{4.818pt}}
\put(272,90){\makebox(0,0){ 20000}}
\put(272.0,479.0){\rule[-0.200pt]{0.400pt}{4.818pt}}
\put(414.0,131.0){\rule[-0.200pt]{0.400pt}{4.818pt}}
\put(414,90){\makebox(0,0){ 40000}}
\put(414.0,479.0){\rule[-0.200pt]{0.400pt}{4.818pt}}
\put(555.0,131.0){\rule[-0.200pt]{0.400pt}{4.818pt}}
\put(555,90){\makebox(0,0){ 60000}}
\put(555.0,479.0){\rule[-0.200pt]{0.400pt}{4.818pt}}
\put(697.0,131.0){\rule[-0.200pt]{0.400pt}{4.818pt}}
\put(697,90){\makebox(0,0){ 80000}}
\put(697.0,479.0){\rule[-0.200pt]{0.400pt}{4.818pt}}
\put(839.0,131.0){\rule[-0.200pt]{0.400pt}{4.818pt}}
\put(839,90){\makebox(0,0){ 100000}}
\put(839.0,479.0){\rule[-0.200pt]{0.400pt}{4.818pt}}
\put(130.0,131.0){\rule[-0.200pt]{0.400pt}{88.651pt}}
\put(130.0,131.0){\rule[-0.200pt]{170.798pt}{0.400pt}}
\put(839.0,131.0){\rule[-0.200pt]{0.400pt}{88.651pt}}
\put(130.0,499.0){\rule[-0.200pt]{170.798pt}{0.400pt}}
\put(484,29){\makebox(0,0){t}}
\put(679,459){\makebox(0,0)[r]{$P(\Phi_t \in  \mathcal{S}_{\opt})$}}
\put(699.0,459.0){\rule[-0.200pt]{24.090pt}{0.400pt}}
\put(130,315){\usebox{\plotpoint}}
\put(130,315){\usebox{\plotpoint}}
\put(130,315){\usebox{\plotpoint}}
\put(130,315){\usebox{\plotpoint}}
\put(130,315){\usebox{\plotpoint}}
\put(130,315){\usebox{\plotpoint}}
\put(130,315){\usebox{\plotpoint}}
\put(130,315){\usebox{\plotpoint}}
\put(130,315){\usebox{\plotpoint}}
\put(130,315){\usebox{\plotpoint}}
\put(130,315){\usebox{\plotpoint}}
\put(130,315){\usebox{\plotpoint}}
\put(130,315){\usebox{\plotpoint}}
\put(130,315){\usebox{\plotpoint}}
\put(130,315){\usebox{\plotpoint}}
\put(130,315){\usebox{\plotpoint}}
\put(130,315){\usebox{\plotpoint}}
\put(130,315){\usebox{\plotpoint}}
\put(130,315){\usebox{\plotpoint}}
\put(130,315){\usebox{\plotpoint}}
\put(130,315){\usebox{\plotpoint}}
\put(130,315){\usebox{\plotpoint}}
\put(130,315){\usebox{\plotpoint}}
\put(130,315){\usebox{\plotpoint}}
\put(130,315){\usebox{\plotpoint}}
\put(130,315){\usebox{\plotpoint}}
\put(130,315){\usebox{\plotpoint}}
\put(130,315){\usebox{\plotpoint}}
\put(130,315){\usebox{\plotpoint}}
\put(130,315){\usebox{\plotpoint}}
\put(130,315){\usebox{\plotpoint}}
\put(130,315){\usebox{\plotpoint}}
\put(130,315){\usebox{\plotpoint}}
\put(130,315){\usebox{\plotpoint}}
\put(130,315){\usebox{\plotpoint}}
\put(130,315){\usebox{\plotpoint}}
\put(130,315){\usebox{\plotpoint}}
\put(130,315){\usebox{\plotpoint}}
\put(130,315){\usebox{\plotpoint}}
\put(130,315){\usebox{\plotpoint}}
\put(130,315){\usebox{\plotpoint}}
\put(130,315){\usebox{\plotpoint}}
\put(130,315){\usebox{\plotpoint}}
\put(130,315){\usebox{\plotpoint}}
\put(130,315){\usebox{\plotpoint}}
\put(130,315){\usebox{\plotpoint}}
\put(130,315){\usebox{\plotpoint}}
\put(130,315){\usebox{\plotpoint}}
\put(130,315){\usebox{\plotpoint}}
\put(130,315){\usebox{\plotpoint}}
\put(130,315){\usebox{\plotpoint}}
\put(130,315){\usebox{\plotpoint}}
\put(130,315){\usebox{\plotpoint}}
\put(130,315){\usebox{\plotpoint}}
\put(130,315){\usebox{\plotpoint}}
\put(130,315){\usebox{\plotpoint}}
\put(130,315){\usebox{\plotpoint}}
\put(130,315){\usebox{\plotpoint}}
\put(130,315){\usebox{\plotpoint}}
\put(130,315){\usebox{\plotpoint}}
\put(130,315){\usebox{\plotpoint}}
\put(130,315){\usebox{\plotpoint}}
\put(130,315){\usebox{\plotpoint}}
\put(130,315){\usebox{\plotpoint}}
\put(130,315){\usebox{\plotpoint}}
\put(130,315){\usebox{\plotpoint}}
\put(130,315){\usebox{\plotpoint}}
\put(130,315){\usebox{\plotpoint}}
\put(130,315){\usebox{\plotpoint}}
\put(130,315){\usebox{\plotpoint}}
\put(130,315){\usebox{\plotpoint}}
\put(130.0,315.0){\rule[-0.200pt]{170.798pt}{0.400pt}}
\put(130.0,131.0){\rule[-0.200pt]{0.400pt}{88.651pt}}
\put(130.0,131.0){\rule[-0.200pt]{170.798pt}{0.400pt}}
\put(839.0,131.0){\rule[-0.200pt]{0.400pt}{88.651pt}}
\put(130.0,499.0){\rule[-0.200pt]{170.798pt}{0.400pt}}
\end{picture}
\end{center}
\caption{The probability   $P(\Phi_t \in S_{\opt})$ when RSH-I  maximises $(x-49)^2$.}
\label{f3-convergence2} 
\end{figure}

Our approach  is different from  that of visualising the fitness value over $t$. The latter approach  may be the most popular used  for illustrating the convergence of randomised search heuristics (for example, see Figures 1 to 3 in \cite{back1993overview}).   
Figure~\ref{f3-convergence1} shows   the fitness value $f_t$ `converges' after about 5,000 iterations  and  thereafter no change.  
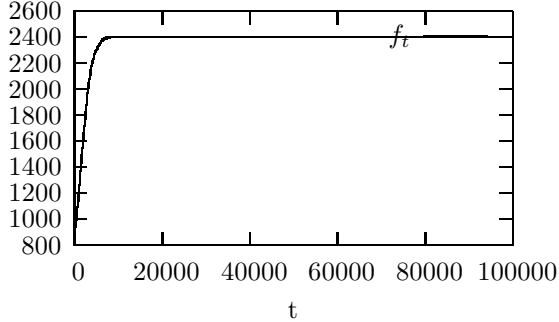
\begin{figure}[ht]
\begin{center}
\setlength{\unitlength}{0.240900pt}
\ifx\plotpoint\undefined\newsavebox{\plotpoint}\fi
\begin{picture}(900,540)(0,0)
\sbox{\plotpoint}{\rule[-0.200pt]{0.400pt}{0.400pt}}%
\put(150.0,131.0){\rule[-0.200pt]{4.818pt}{0.400pt}}
\put(130,131){\makebox(0,0)[r]{ 800}}
\put(819.0,131.0){\rule[-0.200pt]{4.818pt}{0.400pt}}
\put(150.0,172.0){\rule[-0.200pt]{4.818pt}{0.400pt}}
\put(130,172){\makebox(0,0)[r]{ 1000}}
\put(819.0,172.0){\rule[-0.200pt]{4.818pt}{0.400pt}}
\put(150.0,213.0){\rule[-0.200pt]{4.818pt}{0.400pt}}
\put(130,213){\makebox(0,0)[r]{ 1200}}
\put(819.0,213.0){\rule[-0.200pt]{4.818pt}{0.400pt}}
\put(150.0,254.0){\rule[-0.200pt]{4.818pt}{0.400pt}}
\put(130,254){\makebox(0,0)[r]{ 1400}}
\put(819.0,254.0){\rule[-0.200pt]{4.818pt}{0.400pt}}
\put(150.0,295.0){\rule[-0.200pt]{4.818pt}{0.400pt}}
\put(130,295){\makebox(0,0)[r]{ 1600}}
\put(819.0,295.0){\rule[-0.200pt]{4.818pt}{0.400pt}}
\put(150.0,335.0){\rule[-0.200pt]{4.818pt}{0.400pt}}
\put(130,335){\makebox(0,0)[r]{ 1800}}
\put(819.0,335.0){\rule[-0.200pt]{4.818pt}{0.400pt}}
\put(150.0,376.0){\rule[-0.200pt]{4.818pt}{0.400pt}}
\put(130,376){\makebox(0,0)[r]{ 2000}}
\put(819.0,376.0){\rule[-0.200pt]{4.818pt}{0.400pt}}
\put(150.0,417.0){\rule[-0.200pt]{4.818pt}{0.400pt}}
\put(130,417){\makebox(0,0)[r]{ 2200}}
\put(819.0,417.0){\rule[-0.200pt]{4.818pt}{0.400pt}}
\put(150.0,458.0){\rule[-0.200pt]{4.818pt}{0.400pt}}
\put(130,458){\makebox(0,0)[r]{ 2400}}
\put(819.0,458.0){\rule[-0.200pt]{4.818pt}{0.400pt}}
\put(150.0,499.0){\rule[-0.200pt]{4.818pt}{0.400pt}}
\put(130,499){\makebox(0,0)[r]{ 2600}}
\put(819.0,499.0){\rule[-0.200pt]{4.818pt}{0.400pt}}
\put(150.0,131.0){\rule[-0.200pt]{0.400pt}{4.818pt}}
\put(150,90){\makebox(0,0){ 0}}
\put(150.0,479.0){\rule[-0.200pt]{0.400pt}{4.818pt}}
\put(288.0,131.0){\rule[-0.200pt]{0.400pt}{4.818pt}}
\put(288,90){\makebox(0,0){ 20000}}
\put(288.0,479.0){\rule[-0.200pt]{0.400pt}{4.818pt}}
\put(426.0,131.0){\rule[-0.200pt]{0.400pt}{4.818pt}}
\put(426,90){\makebox(0,0){ 40000}}
\put(426.0,479.0){\rule[-0.200pt]{0.400pt}{4.818pt}}
\put(563.0,131.0){\rule[-0.200pt]{0.400pt}{4.818pt}}
\put(563,90){\makebox(0,0){ 60000}}
\put(563.0,479.0){\rule[-0.200pt]{0.400pt}{4.818pt}}
\put(701.0,131.0){\rule[-0.200pt]{0.400pt}{4.818pt}}
\put(701,90){\makebox(0,0){ 80000}}
\put(701.0,479.0){\rule[-0.200pt]{0.400pt}{4.818pt}}
\put(839.0,131.0){\rule[-0.200pt]{0.400pt}{4.818pt}}
\put(839,90){\makebox(0,0){ 100000}}
\put(839.0,479.0){\rule[-0.200pt]{0.400pt}{4.818pt}}
\put(150.0,131.0){\rule[-0.200pt]{0.400pt}{88.651pt}}
\put(150.0,131.0){\rule[-0.200pt]{165.980pt}{0.400pt}}
\put(839.0,131.0){\rule[-0.200pt]{0.400pt}{88.651pt}}
\put(150.0,499.0){\rule[-0.200pt]{165.980pt}{0.400pt}}
\put(494,29){\makebox(0,0){t}}
\put(679,459){\makebox(0,0)[r]{$f_t$}}
\put(699.0,459.0){\rule[-0.200pt]{24.090pt}{0.400pt}}
\put(150,139){\usebox{\plotpoint}}
\put(150,139){\usebox{\plotpoint}}
\put(150.0,139.0){\rule[-0.200pt]{0.400pt}{1.927pt}}
\put(150.0,147.0){\usebox{\plotpoint}}
\put(151.0,147.0){\rule[-0.200pt]{0.400pt}{3.132pt}}
\put(151.0,160.0){\usebox{\plotpoint}}
\put(152,169.67){\rule{0.241pt}{0.400pt}}
\multiput(152.00,169.17)(0.500,1.000){2}{\rule{0.120pt}{0.400pt}}
\put(152.0,160.0){\rule[-0.200pt]{0.400pt}{2.409pt}}
\put(153,171){\usebox{\plotpoint}}
\put(153,171){\usebox{\plotpoint}}
\put(153,171){\usebox{\plotpoint}}
\put(153,171){\usebox{\plotpoint}}
\put(153,171){\usebox{\plotpoint}}
\put(153,171){\usebox{\plotpoint}}
\put(153,171){\usebox{\plotpoint}}
\put(153,171){\usebox{\plotpoint}}
\put(153,171){\usebox{\plotpoint}}
\put(153,171){\usebox{\plotpoint}}
\put(153,171){\usebox{\plotpoint}}
\put(153,171){\usebox{\plotpoint}}
\put(153,171){\usebox{\plotpoint}}
\put(153.0,171.0){\rule[-0.200pt]{0.400pt}{2.409pt}}
\put(153.0,181.0){\usebox{\plotpoint}}
\put(154.0,181.0){\rule[-0.200pt]{0.400pt}{2.409pt}}
\put(154.0,191.0){\usebox{\plotpoint}}
\put(155,199.67){\rule{0.241pt}{0.400pt}}
\multiput(155.00,199.17)(0.500,1.000){2}{\rule{0.120pt}{0.400pt}}
\put(155.0,191.0){\rule[-0.200pt]{0.400pt}{2.168pt}}
\put(156,201){\usebox{\plotpoint}}
\put(156,201){\usebox{\plotpoint}}
\put(156,201){\usebox{\plotpoint}}
\put(156,201){\usebox{\plotpoint}}
\put(156,201){\usebox{\plotpoint}}
\put(156,201){\usebox{\plotpoint}}
\put(156,201){\usebox{\plotpoint}}
\put(156,201){\usebox{\plotpoint}}
\put(156,201){\usebox{\plotpoint}}
\put(156,201){\usebox{\plotpoint}}
\put(156,201){\usebox{\plotpoint}}
\put(156.0,201.0){\rule[-0.200pt]{0.400pt}{2.891pt}}
\put(156.0,213.0){\usebox{\plotpoint}}
\put(157.0,213.0){\rule[-0.200pt]{0.400pt}{2.891pt}}
\put(157.0,225.0){\usebox{\plotpoint}}
\put(158,235.67){\rule{0.241pt}{0.400pt}}
\multiput(158.00,235.17)(0.500,1.000){2}{\rule{0.120pt}{0.400pt}}
\put(158.0,225.0){\rule[-0.200pt]{0.400pt}{2.650pt}}
\put(159,237){\usebox{\plotpoint}}
\put(159,237){\usebox{\plotpoint}}
\put(159,237){\usebox{\plotpoint}}
\put(159,237){\usebox{\plotpoint}}
\put(159,237){\usebox{\plotpoint}}
\put(159,237){\usebox{\plotpoint}}
\put(159,237){\usebox{\plotpoint}}
\put(159,237){\usebox{\plotpoint}}
\put(159,237){\usebox{\plotpoint}}
\put(159,237){\usebox{\plotpoint}}
\put(159,237){\usebox{\plotpoint}}
\put(159.0,237.0){\rule[-0.200pt]{0.400pt}{2.650pt}}
\put(159.0,248.0){\usebox{\plotpoint}}
\put(160.0,248.0){\rule[-0.200pt]{0.400pt}{2.891pt}}
\put(160.0,260.0){\usebox{\plotpoint}}
\put(161.0,260.0){\rule[-0.200pt]{0.400pt}{2.891pt}}
\put(161.0,272.0){\usebox{\plotpoint}}
\put(162.0,272.0){\rule[-0.200pt]{0.400pt}{2.891pt}}
\put(162.0,284.0){\usebox{\plotpoint}}
\put(163.0,284.0){\rule[-0.200pt]{0.400pt}{2.891pt}}
\put(163.0,296.0){\usebox{\plotpoint}}
\put(164,306.67){\rule{0.241pt}{0.400pt}}
\multiput(164.00,306.17)(0.500,1.000){2}{\rule{0.120pt}{0.400pt}}
\put(164.0,296.0){\rule[-0.200pt]{0.400pt}{2.650pt}}
\put(165,308){\usebox{\plotpoint}}
\put(165,308){\usebox{\plotpoint}}
\put(165,308){\usebox{\plotpoint}}
\put(165,308){\usebox{\plotpoint}}
\put(165,308){\usebox{\plotpoint}}
\put(165,308){\usebox{\plotpoint}}
\put(165,308){\usebox{\plotpoint}}
\put(165,308){\usebox{\plotpoint}}
\put(165,308){\usebox{\plotpoint}}
\put(165,308){\usebox{\plotpoint}}
\put(165,308){\usebox{\plotpoint}}
\put(165,308){\usebox{\plotpoint}}
\put(165.0,308.0){\rule[-0.200pt]{0.400pt}{2.650pt}}
\put(165.0,319.0){\usebox{\plotpoint}}
\put(166.0,319.0){\rule[-0.200pt]{0.400pt}{2.650pt}}
\put(166.0,330.0){\usebox{\plotpoint}}
\put(167.0,330.0){\rule[-0.200pt]{0.400pt}{2.409pt}}
\put(167.0,340.0){\usebox{\plotpoint}}
\put(168.0,340.0){\rule[-0.200pt]{0.400pt}{2.650pt}}
\put(168.0,351.0){\usebox{\plotpoint}}
\put(169.0,351.0){\rule[-0.200pt]{0.400pt}{2.168pt}}
\put(169.0,360.0){\usebox{\plotpoint}}
\put(170,368.67){\rule{0.241pt}{0.400pt}}
\multiput(170.00,368.17)(0.500,1.000){2}{\rule{0.120pt}{0.400pt}}
\put(170.0,360.0){\rule[-0.200pt]{0.400pt}{2.168pt}}
\put(171,370){\usebox{\plotpoint}}
\put(171,370){\usebox{\plotpoint}}
\put(171,370){\usebox{\plotpoint}}
\put(171,370){\usebox{\plotpoint}}
\put(171,370){\usebox{\plotpoint}}
\put(171,370){\usebox{\plotpoint}}
\put(171,370){\usebox{\plotpoint}}
\put(171,370){\usebox{\plotpoint}}
\put(171,370){\usebox{\plotpoint}}
\put(171,370){\usebox{\plotpoint}}
\put(171,370){\usebox{\plotpoint}}
\put(171,370){\usebox{\plotpoint}}
\put(171.0,370.0){\rule[-0.200pt]{0.400pt}{1.927pt}}
\put(171.0,378.0){\usebox{\plotpoint}}
\put(172.0,378.0){\rule[-0.200pt]{0.400pt}{1.927pt}}
\put(172.0,386.0){\usebox{\plotpoint}}
\put(173.0,386.0){\rule[-0.200pt]{0.400pt}{1.445pt}}
\put(173.0,392.0){\usebox{\plotpoint}}
\put(174.0,392.0){\rule[-0.200pt]{0.400pt}{1.686pt}}
\put(174.0,399.0){\usebox{\plotpoint}}
\put(175.0,399.0){\rule[-0.200pt]{0.400pt}{1.686pt}}
\put(175.0,406.0){\usebox{\plotpoint}}
\put(176.0,406.0){\rule[-0.200pt]{0.400pt}{1.445pt}}
\put(176.0,412.0){\usebox{\plotpoint}}
\put(177.0,412.0){\rule[-0.200pt]{0.400pt}{1.204pt}}
\put(177.0,417.0){\usebox{\plotpoint}}
\put(178.0,417.0){\rule[-0.200pt]{0.400pt}{0.723pt}}
\put(178.0,420.0){\usebox{\plotpoint}}
\put(179.0,420.0){\rule[-0.200pt]{0.400pt}{0.964pt}}
\put(179.0,424.0){\usebox{\plotpoint}}
\put(180.0,424.0){\rule[-0.200pt]{0.400pt}{0.964pt}}
\put(180.0,428.0){\usebox{\plotpoint}}
\put(181.0,428.0){\rule[-0.200pt]{0.400pt}{0.723pt}}
\put(181.0,431.0){\usebox{\plotpoint}}
\put(182.0,431.0){\rule[-0.200pt]{0.400pt}{0.723pt}}
\put(182.0,434.0){\usebox{\plotpoint}}
\put(183.0,434.0){\rule[-0.200pt]{0.400pt}{0.482pt}}
\put(183.0,436.0){\usebox{\plotpoint}}
\put(184.0,436.0){\rule[-0.200pt]{0.400pt}{0.482pt}}
\put(184.0,438.0){\usebox{\plotpoint}}
\put(185.0,438.0){\rule[-0.200pt]{0.400pt}{0.482pt}}
\put(185.0,440.0){\usebox{\plotpoint}}
\put(186.0,440.0){\rule[-0.200pt]{0.400pt}{0.723pt}}
\put(186.0,443.0){\usebox{\plotpoint}}
\put(187.0,443.0){\usebox{\plotpoint}}
\put(187.0,444.0){\usebox{\plotpoint}}
\put(188.0,444.0){\rule[-0.200pt]{0.400pt}{0.482pt}}
\put(188.0,446.0){\usebox{\plotpoint}}
\put(189.0,446.0){\usebox{\plotpoint}}
\put(189.0,447.0){\usebox{\plotpoint}}
\put(190.0,447.0){\rule[-0.200pt]{0.400pt}{0.482pt}}
\put(190.0,449.0){\usebox{\plotpoint}}
\put(191.0,449.0){\rule[-0.200pt]{0.400pt}{0.482pt}}
\put(191.0,451.0){\usebox{\plotpoint}}
\put(192.0,451.0){\usebox{\plotpoint}}
\put(192.0,452.0){\usebox{\plotpoint}}
\put(193.0,452.0){\usebox{\plotpoint}}
\put(193.0,453.0){\usebox{\plotpoint}}
\put(194.0,453.0){\usebox{\plotpoint}}
\put(194.0,454.0){\rule[-0.200pt]{0.482pt}{0.400pt}}
\put(196.0,454.0){\usebox{\plotpoint}}
\put(196.0,455.0){\usebox{\plotpoint}}
\put(197.0,455.0){\usebox{\plotpoint}}
\put(197.0,456.0){\rule[-0.200pt]{0.723pt}{0.400pt}}
\put(200.0,456.0){\usebox{\plotpoint}}
\put(200.0,457.0){\rule[-0.200pt]{1.445pt}{0.400pt}}
\put(206.0,457.0){\usebox{\plotpoint}}
\put(206.0,458.0){\rule[-0.200pt]{152.490pt}{0.400pt}}
\put(150.0,131.0){\rule[-0.200pt]{0.400pt}{88.651pt}}
\put(150.0,131.0){\rule[-0.200pt]{165.980pt}{0.400pt}}
\put(839.0,131.0){\rule[-0.200pt]{0.400pt}{88.651pt}}
\put(150.0,499.0){\rule[-0.200pt]{165.980pt}{0.400pt}}
\end{picture}
\end{center}
\caption{The fitness value $f_t$  when  RSH-I maximises $(x-49)^2$.   $f_t$ is the mean fitness value   averaged over 100,000 runs.}
\label{f3-convergence1}
\end{figure}

In Figure~\ref{f3-convergence1}, RSH-I seems convergent, but this is only a kind of  premature convergence to the local optimum at $0$, rather than the global optimum at $100$.  In fact, the solution becomes farther away from the global optimum using the Euclidean distance: initially the   distance between the solution  and the optimum is $100-20=80$; then after $6000$ iterations, the distance increases to $100-0=100$.  

The approach of using the convergence in distribution provides a more accurate description   of converegnce than  that of visualising the fitness value does.

\section{Convergence Rate} 

\label{secConvergenceRate}
\subsection{Theoretical Study: Average Convergence Rate}

In this subsection, we define the average convergence rate of randomised search heuristics and then present lower and upper bounds on the average convergence rate.
The convergence rate  is  how fast  a randomised search heuristic converges to the optimal set per iteration. It is an  important measure of the performance of randomised search heuristics, but  less studied.

Since randomised search heuristics belong to iterative methods,
 we adopt   the average convergence rate, commonly used in iterative methods  \citep[Definition 3.1]{varga2009matrix}.

\begin{definition} 
Assume the probability of the initial population $\Phi_0$ in the non-optimal set is larger than 0.  The \emph{average  rate of convergence for $t$ iterations} is given by the following logarithmic reduction: 
\begin{align}
-\frac{1}{t} \ln \frac{\parallel    \mathbf{q}_t  \parallel_1}{\parallel    \mathbf{q}_0  \parallel_1}.  
\end{align}
\end{definition}

Since $P(\Phi_t \in S_{\non}) =\parallel \mathbf{q}_t \parallel_1$, then the  average convergence rate for $t$ iterations can be rewritten as
\begin{align}
-\frac{1}{t}\ln   \frac{P(\Phi_t \in S_{\non})}{P(\Phi_0 \in S_{\non})} .
\end{align} 

In the above definition, we don't consider the case of  the initial population  in the optimal set with probability 1. In this case the algorithm already converges and no need to discuss the convergence rate.

Notice that
\begin{align*}
\frac{\parallel \mathbf{q}_{t} \parallel_1}{\parallel \mathbf{q}_0 \parallel_1} =\frac{\parallel \mathbf{q}_{t} \parallel_1}{\parallel \mathbf{q}_{t-1} \parallel_1}
\frac{\parallel \mathbf{q}_{t-} \parallel_1}{\parallel \mathbf{q}_{t-2} \parallel_1} \cdots \frac{\parallel \mathbf{q}_{1} \parallel_1}{\parallel \mathbf{q}_0 \parallel_1}, 
 \end{align*}
 and the average convergence rate is equal to the   logarithmic mean  
 \begin{align*}
 \begin{array}{lll}
&-\frac{1}{t} \ln \frac{\parallel \mathbf{q}_{t} \parallel_1}{\parallel \mathbf{q}_0 \parallel_1}  \\=&-\frac{1}{t} \left( \ln \frac{\parallel \mathbf{q}_{t} \parallel_1}{\parallel \mathbf{q}_{t-1} \parallel_1}+\cdots +\ln \frac{\parallel \mathbf{q}_{1} \parallel_1}{\parallel \mathbf{q}_0 \parallel_1} \right) \\
=& -\frac{1}{t} \left( \ln \frac{P(\Phi_t \in S_{\non})}{P(\Phi_{t-1} \in S_{\non})}+\cdots +\ln \frac{P(\Phi_1 \in S_{\non})}{P(\Phi_0 \in S_{\non})}\right).
 \end{array}
\end{align*}
 
The last formula shows that the average convergence rate  is the average reduction factor of the probability of $\Phi_t$ in the non-optimal set per iteration in terms of  the   logarithmic mean.

In the following we estimate the lower bound and upper bound of the average convergence rate. The following theorem gives a lower bound on  the average convergence rate.

\begin{theorem} 
\label{theRateLowerBound}
If a randomised search heuristic is convergent, then the averaged convergence rate    is lower-bounded by
 \begin{align}
  &-\frac{1}{t} \ln \frac{\parallel    \mathbf{q}_t  \parallel_1}{\parallel    \mathbf{q}_0  \parallel_1}  \ge -\frac{1}{t} \ln \parallel    (\mathbf{Q}^T)^t   \parallel_1,
\\
& -   \lim_{t \to +\infty}    \frac{1}{t} \ln \frac{\parallel    \mathbf{q}_t  \parallel_1}{\parallel    \mathbf{q}_0  \parallel_1}    \ge  -\ln \rho(\mathbf{Q}).
\end{align}
\end{theorem}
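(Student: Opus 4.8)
The plan is to derive both inequalities directly from the matrix iteration \eqref{equMatrixIteration} together with the consistency (submultiplicativity) of the $1$-norm, and then to pass to the limit using Gelfand's formula, which has already been quoted in the proof of Lemma~\ref{lemConvergence2}.

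First I would take the transpose of \eqref{equMatrixIteration} to write $\mathbf{q}_t = (\mathbf{Q}^T)^t \mathbf{q}_0$, and apply the defining property of the consistent matrix norm to obtain $\parallel \mathbf{q}_t \parallel_1 \le \parallel (\mathbf{Q}^T)^t \parallel_1 \, \parallel \mathbf{q}_0 \parallel_1$. By the hypothesis of the definition, $\parallel \mathbf{q}_0 \parallel_1 = P(\Phi_0 \in S_{\non}) > 0$, so we may divide through and get $\parallel \mathbf{q}_t \parallel_1 / \parallel \mathbf{q}_0 \parallel_1 \le \parallel (\mathbf{Q}^T)^t \parallel_1$. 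Applying the (monotone) logarithm and then multiplying by the negative factor $-1/t$, which reverses the inequality, yields the first claimed bound.

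For the asymptotic statement I would rewrite $-\frac{1}{t} \ln \parallel (\mathbf{Q}^T)^t \parallel_1 = -\ln\bigl( \parallel (\mathbf{Q}^T)^t \parallel_1^{1/t} \bigr)$ and invoke Gelfand's spectral radius formula, $\lim_{t \to +\infty} \parallel (\mathbf{Q}^T)^t \parallel_1^{1/t} = \rho(\mathbf{Q}^T) = \rho(\mathbf{Q})$, together with the continuity of $\ln$ on $(0,+\infty)$; since convergence forces $\rho(\mathbf{Q}) < 1 > 0$ by Lemma~\ref{lemConvergence} the logarithm is evaluated at a positive number, so $-\frac{1}{t}\ln \parallel (\mathbf{Q}^T)^t \parallel_1 \to -\ln \rho(\mathbf{Q})$. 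Letting $t \to +\infty$ in the first inequality then gives the second.

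I do not anticipate a genuine obstacle; the only delicate point is bookkeeping on the left-hand side. The quantity $-\frac{1}{t} \ln\bigl(\parallel \mathbf{q}_t \parallel_1 / \parallel \mathbf{q}_0 \parallel_1\bigr)$ need not possess an honest limit for an arbitrary initial distribution, so strictly the asymptotic inequality should be read with $\liminf_{t\to+\infty}$ on the left (or stated for those $\mathbf{q}_0$ for which the limit exists). This is harmless: from $a_t \ge b_t$ for all $t$ and the existence of $\lim_t b_t$ one gets $\liminf_t a_t \ge \lim_t b_t$, which is exactly what is needed. I would add a one-line remark to this effect and otherwise present the computation as above.
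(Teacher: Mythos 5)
Your proposal is correct and follows essentially the same route as the paper's own proof: the bound $\parallel \mathbf{q}_t \parallel_1 \le \parallel (\mathbf{Q}^T)^t \parallel_1 \parallel \mathbf{q}_0 \parallel_1$ from the matrix iteration gives the first inequality, and Gelfand's spectral radius formula gives the limit statement. Your extra remark that the left-hand side should strictly be read as a $\liminf$ (since the limit of $-\frac{1}{t}\ln(\parallel \mathbf{q}_t \parallel_1/\parallel \mathbf{q}_0 \parallel_1)$ need not exist) is a legitimate refinement that the paper's proof passes over silently.
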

\begin{proof}
 From the matrix iteration
$
 \mathbf{q}_t = (\mathbf{Q}^T)^t \mathbf{q}_0,
$
we get
\begin{align*}
 \frac{1}{t} \ln \frac{\parallel    \mathbf{q}_t  \parallel_1}{\parallel    \mathbf{q}_0  \parallel_1}  = \frac{1}{t} \ln \frac{\parallel     (\mathbf{Q}^T)^t \mathbf{q}_0\parallel_1}{\parallel    \mathbf{q}_0  \parallel_1}    \le   \frac{1}{t} \ln \parallel    (\mathbf{Q}^T)^t   \parallel_1.
\end{align*}
Then  the first conclusion is proven.

  Let   $t \to +\infty$ and apply  Gelfand's spectral radius formula, then 
 $$ \lim_{t \to +\infty} \frac{ \ln \parallel (\mathbf{Q}^T)^t\parallel_1}{t}=  \ln \mathbf{\rho(\mathbf{Q}^T)} =\ln \mathbf{\rho(\mathbf{Q})}.$$

Thus we prove that
$$
- \lim_{t \to +\infty} \frac{1}{t} \ln \frac{\parallel    \mathbf{q}_t  \parallel_1}{\parallel    \mathbf{q}_0  \parallel_1}  \ge -\ln \mathbf{\rho(\mathbf{Q})}.
$$
We proves the second conclusion.
\end{proof}

The following theorems gives an upper bound on the average convergence rate.
\begin{theorem} 
\label{theRateUpperBound}
If a randomised search heuristic is convergent, then the average convergence rate is upper-bounded by
\begin{align}
&- \frac{1}{t}\ln \frac{\parallel \mathbf{q}_t \parallel_1}{\parallel \mathbf{q}_0 \parallel_1}    
\le    -\frac{1}{t} \ln \left( \parallel ((\mathbf{Q}^{T})^{-1})^t \parallel_1 \right)^{-1},\\
 &- \lim_{t \to +\infty} \frac{1}{t} \ln  \frac{\parallel \mathbf{q}_t \parallel_1}{\parallel \mathbf{q}_0 \parallel_1}   \le   \ln {\rho(\mathbf{Q}^{-1})}
\end{align}
\end{theorem}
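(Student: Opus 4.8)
The plan is to run the matrix iteration~(\ref{equMatrixIteration}) \emph{backwards}. Convergence gives $\rho(\mathbf{Q})<1$ by Lemma~\ref{lemConvergence}, and — as is implicit in the appearance of $(\mathbf{Q}^T)^{-1}$ in the statement — I will take $\mathbf{Q}$ (hence $\mathbf{Q}^T$) to be nonsingular, so that the forward relation $\mathbf{q}_t=(\mathbf{Q}^T)^t\mathbf{q}_0$ inverts to $\mathbf{q}_0=\big((\mathbf{Q}^T)^{-1}\big)^t\mathbf{q}_t$.

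First I would apply consistency of the induced $1$-norm with the vector $1$-norm to this identity, obtaining $\parallel\mathbf{q}_0\parallel_1\le\parallel\big((\mathbf{Q}^T)^{-1}\big)^t\parallel_1\,\parallel\mathbf{q}_t\parallel_1$. Since $\parallel\mathbf{q}_0\parallel_1=P(\Phi_0\in S_{\non})>0$ by the standing hypothesis of the definition, and $\parallel\big((\mathbf{Q}^T)^{-1}\big)^t\parallel_1>0$, dividing through yields
\begin{align*}
\frac{\parallel\mathbf{q}_t\parallel_1}{\parallel\mathbf{q}_0\parallel_1}\ \ge\ \Big(\parallel\big((\mathbf{Q}^T)^{-1}\big)^t\parallel_1\Big)^{-1}.
\end{align*}
Applying $-\tfrac1t\ln(\cdot)$, which reverses the inequality because it is decreasing, gives exactly the first displayed bound of the theorem.

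For the asymptotic bound I would let $t\to+\infty$ in this inequality and evaluate the right-hand side by Gelfand's spectral radius formula applied to $(\mathbf{Q}^T)^{-1}$, using $\rho(\mathbf{A}^T)=\rho(\mathbf{A})$:
\begin{align*}
\lim_{t\to+\infty}-\frac1t\ln\Big(\parallel\big((\mathbf{Q}^T)^{-1}\big)^t\parallel_1\Big)^{-1}=\lim_{t\to+\infty}\frac1t\ln\parallel\big((\mathbf{Q}^T)^{-1}\big)^t\parallel_1=\ln\rho(\mathbf{Q}^{-1}).
\end{align*}
Combined with the per-$t$ inequality above, this forces $-\limsup_{t\to+\infty}\tfrac1t\ln\big(\parallel\mathbf{q}_t\parallel_1/\parallel\mathbf{q}_0\parallel_1\big)\le\ln\rho(\mathbf{Q}^{-1})$, i.e. the second conclusion (read as a limit whenever that limit exists).

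The routine steps — submultiplicativity/consistency of the norm, the direction of the logarithm inequality, and the remark that $\rho(\mathbf{Q}^{-1})=1/\min_i|\lambda_i(\mathbf{Q})|>1$, so that this upper bound together with Theorem~\ref{theRateLowerBound} sandwiches the rate between $-\ln\rho(\mathbf{Q})$ and $\ln\rho(\mathbf{Q}^{-1})$ — require no real effort. The single point needing care, and the only genuine obstacle, is the nonsingularity of $\mathbf{Q}$: convergence alone yields only $\rho(\mathbf{Q})<1$, which still allows $0$ as an eigenvalue, so the theorem as written presupposes $\det\mathbf{Q}\ne0$. I would state this assumption explicitly; if one insists on covering the singular case, $(\mathbf{Q}^T)^{-1}$ must be replaced by an argument restricted to the range of $(\mathbf{Q}^T)^t$, but I expect the intended setting is simply the nonsingular one.
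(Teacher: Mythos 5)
Your argument is correct and follows essentially the same route as the paper's own proof: invert the iteration to $\mathbf{q}_0=\left((\mathbf{Q}^T)^{-1}\right)^t\mathbf{q}_t$, apply consistency of the $1$-norm and take $-\tfrac1t\ln(\cdot)$ for the first bound, then use Gelfand's formula with $\rho((\mathbf{Q}^T)^{-1})=\rho(\mathbf{Q}^{-1})$ for the asymptotic one. Your explicit remark that the theorem tacitly presupposes $\det\mathbf{Q}\neq 0$ (convergence alone only gives $\rho(\mathbf{Q})<1$) is a fair observation about an assumption the paper leaves implicit, but it does not change the argument.
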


\begin{proof}
From $\mathbf{q}_t = (\mathbf{Q}^T)^t \mathbf{q}_0$, we get
\begin{align*}
&\mathbf{q}_0 = ((\mathbf{Q}^{T})^{-1})^t \mathbf{q}_t,
\end{align*}

Hence
\begin{align*}
&\parallel \mathbf{q}_0 \parallel_1 \le \parallel ((\mathbf{Q}^{T})^{-1})^t \parallel_1 \parallel \mathbf{q}_t \parallel_1,\\
&\frac{\parallel \mathbf{q}_0 \parallel_1}{\parallel \mathbf{q}_t \parallel_1} \le \parallel ((\mathbf{Q}^{T})^{-1})^t \parallel_1,
\end{align*}

then 
\begin{align*}
- \frac{1}{t}\ln \frac{\parallel \mathbf{q}_t \parallel_1}{\parallel \mathbf{q}_0 \parallel_1}    
\le &  -\frac{1}{t} \ln \left( \parallel ((\mathbf{Q}^{T})^{-1})^t \parallel_1 \right)^{-1}.
\end{align*}
Then the first conclusion is proven.

According to Gelfand's spectral radius formula  and the fact  $ \rho((\mathbf{Q}^{T})^{-1})=\rho(\mathbf{Q}^{-1})$, we get 
\begin{align*} 
 \lim_{t \to +\infty} \left(\parallel ((\mathbf{Q}^{T})^{-1})^t \parallel_1\right)^{1/t} =\rho((\mathbf{Q}^{T})^{-1})=\rho(\mathbf{Q}^{-1}).
\end{align*}
Then 
\begin{align*}
 -\lim_{t \to +\infty} \frac{1}{t}\ln \frac{\parallel \mathbf{q}_t \parallel_1}{\parallel \mathbf{q}_t \parallel_1}      
 \le &  \ln \rho( \mathbf{Q} ^{-1}).
\end{align*}
which is the second conclusion.
\end{proof}
 
From  the theoretical viewpoint, the above two theorems  show lower and upper  bounds on the average convergence rate. 
But in practice it is hard to apply the theoretical results since  both spectral radii $\rho( \mathbf{Q})$ and $\rho( \mathbf{Q} ^{-1})$ are too difficult to calculate in most cases.

\subsection{Computational Study} 
In this subsection, we propose a computational approach to illustrating  the average convergence rate of randomised heuristics. Unlike the theoretical study, the calculation of the average convergence rate is rather simple in the computational study.

We run a randomised search heuristic for $k$ times. Let $n  (\Phi_t \in S_{\non}) $ denote the number of $\Phi_t$ (where $t=0,1, \cdots$) appearing  in the non-optimal set for these $k$ runs. Then in practice, we will take 
\begin{align}
  -\frac{1}{t}\ln    \frac{n  (\Phi_t \in S_{\non})}{k}
\end{align}
as the average convergence rate. 

\begin{example}
\label{exaAverageRate1}
Consider  RSH-I and RSH-II for solving the  maximising problem 
$$
\max \, x^2, x \in \{ 0, \cdots, 100 \}.$$
\end{example}
 
 We run each algorithm for 100,000 times. The initial population is $\Phi_0=20$.   If    $n(\Phi_t \in S_{\non}) \le 10^{-5}$ happens, we don't calculate the average convergence rate.   It is due to the following reason: the event  $n(\Phi_t \in S_{\non}) \le 10^{-5}$  is a small probability event. Computer simulation in 100,000 runs is not enough from the statistical viewpoint.

 Figure \ref{f1-rate1c} shows  the average convergence rate  of RSH-I is much higher than that of RSA-II.  Initially the average convergence rate of both algorithms is $0$. Then the average convergence rate of RSH-I increases from $0$ to about $0.0009$ quickly,  but  the average rate of RSH-II increases from $0$ to about $0.0004$ slowly.

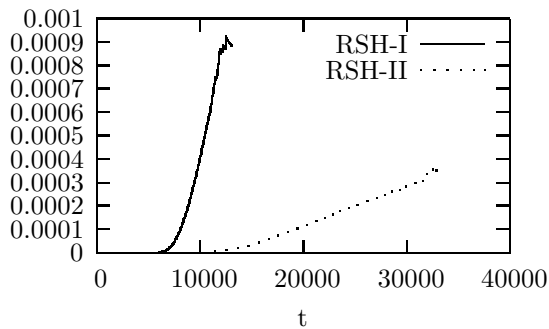
\begin{figure}[ht]
\begin{center}
\setlength{\unitlength}{0.240900pt}
\ifx\plotpoint\undefined\newsavebox{\plotpoint}\fi
\begin{picture}(900,540)(0,0)
\sbox{\plotpoint}{\rule[-0.200pt]{0.400pt}{0.400pt}}%
\put(190.0,131.0){\rule[-0.200pt]{4.818pt}{0.400pt}}
\put(170,131){\makebox(0,0)[r]{ 0}}
\put(819.0,131.0){\rule[-0.200pt]{4.818pt}{0.400pt}}
\put(190.0,168.0){\rule[-0.200pt]{4.818pt}{0.400pt}}
\put(170,168){\makebox(0,0)[r]{ 0.0001}}
\put(819.0,168.0){\rule[-0.200pt]{4.818pt}{0.400pt}}
\put(190.0,205.0){\rule[-0.200pt]{4.818pt}{0.400pt}}
\put(170,205){\makebox(0,0)[r]{ 0.0002}}
\put(819.0,205.0){\rule[-0.200pt]{4.818pt}{0.400pt}}
\put(190.0,241.0){\rule[-0.200pt]{4.818pt}{0.400pt}}
\put(170,241){\makebox(0,0)[r]{ 0.0003}}
\put(819.0,241.0){\rule[-0.200pt]{4.818pt}{0.400pt}}
\put(190.0,278.0){\rule[-0.200pt]{4.818pt}{0.400pt}}
\put(170,278){\makebox(0,0)[r]{ 0.0004}}
\put(819.0,278.0){\rule[-0.200pt]{4.818pt}{0.400pt}}
\put(190.0,315.0){\rule[-0.200pt]{4.818pt}{0.400pt}}
\put(170,315){\makebox(0,0)[r]{ 0.0005}}
\put(819.0,315.0){\rule[-0.200pt]{4.818pt}{0.400pt}}
\put(190.0,352.0){\rule[-0.200pt]{4.818pt}{0.400pt}}
\put(170,352){\makebox(0,0)[r]{ 0.0006}}
\put(819.0,352.0){\rule[-0.200pt]{4.818pt}{0.400pt}}
\put(190.0,389.0){\rule[-0.200pt]{4.818pt}{0.400pt}}
\put(170,389){\makebox(0,0)[r]{ 0.0007}}
\put(819.0,389.0){\rule[-0.200pt]{4.818pt}{0.400pt}}
\put(190.0,425.0){\rule[-0.200pt]{4.818pt}{0.400pt}}
\put(170,425){\makebox(0,0)[r]{ 0.0008}}
\put(819.0,425.0){\rule[-0.200pt]{4.818pt}{0.400pt}}
\put(190.0,462.0){\rule[-0.200pt]{4.818pt}{0.400pt}}
\put(170,462){\makebox(0,0)[r]{ 0.0009}}
\put(819.0,462.0){\rule[-0.200pt]{4.818pt}{0.400pt}}
\put(190.0,499.0){\rule[-0.200pt]{4.818pt}{0.400pt}}
\put(170,499){\makebox(0,0)[r]{ 0.001}}
\put(819.0,499.0){\rule[-0.200pt]{4.818pt}{0.400pt}}
\put(190.0,131.0){\rule[-0.200pt]{0.400pt}{4.818pt}}
\put(190,90){\makebox(0,0){ 0}}
\put(190.0,479.0){\rule[-0.200pt]{0.400pt}{4.818pt}}
\put(352.0,131.0){\rule[-0.200pt]{0.400pt}{4.818pt}}
\put(352,90){\makebox(0,0){ 10000}}
\put(352.0,479.0){\rule[-0.200pt]{0.400pt}{4.818pt}}
\put(514.0,131.0){\rule[-0.200pt]{0.400pt}{4.818pt}}
\put(514,90){\makebox(0,0){ 20000}}
\put(514.0,479.0){\rule[-0.200pt]{0.400pt}{4.818pt}}
\put(677.0,131.0){\rule[-0.200pt]{0.400pt}{4.818pt}}
\put(677,90){\makebox(0,0){ 30000}}
\put(677.0,479.0){\rule[-0.200pt]{0.400pt}{4.818pt}}
\put(839.0,131.0){\rule[-0.200pt]{0.400pt}{4.818pt}}
\put(839,90){\makebox(0,0){ 40000}}
\put(839.0,479.0){\rule[-0.200pt]{0.400pt}{4.818pt}}
\put(190.0,131.0){\rule[-0.200pt]{0.400pt}{88.651pt}}
\put(190.0,131.0){\rule[-0.200pt]{156.344pt}{0.400pt}}
\put(839.0,131.0){\rule[-0.200pt]{0.400pt}{88.651pt}}
\put(190.0,499.0){\rule[-0.200pt]{156.344pt}{0.400pt}}
\put(514,29){\makebox(0,0){t}}
\put(679,459){\makebox(0,0)[r]{RSH-I}}
\put(699.0,459.0){\rule[-0.200pt]{24.090pt}{0.400pt}}
\put(190,131){\usebox{\plotpoint}}
\put(190,131){\usebox{\plotpoint}}
\put(190,131){\usebox{\plotpoint}}
\put(190,131){\usebox{\plotpoint}}
\put(190,131){\usebox{\plotpoint}}
\put(190,131){\usebox{\plotpoint}}
\put(190,131){\usebox{\plotpoint}}
\put(190,131){\usebox{\plotpoint}}
\put(190,131){\usebox{\plotpoint}}
\put(190,131){\usebox{\plotpoint}}
\put(190,131){\usebox{\plotpoint}}
\put(190,131){\usebox{\plotpoint}}
\put(190,131){\usebox{\plotpoint}}
\put(190,131){\usebox{\plotpoint}}
\put(190,131){\usebox{\plotpoint}}
\put(190,131){\usebox{\plotpoint}}
\put(190,131){\usebox{\plotpoint}}
\put(190,131){\usebox{\plotpoint}}
\put(190,131){\usebox{\plotpoint}}
\put(190,131){\usebox{\plotpoint}}
\put(190,131){\usebox{\plotpoint}}
\put(190,131){\usebox{\plotpoint}}
\put(190,131){\usebox{\plotpoint}}
\put(190,131){\usebox{\plotpoint}}
\put(190,131){\usebox{\plotpoint}}
\put(190,131){\usebox{\plotpoint}}
\put(190,131){\usebox{\plotpoint}}
\put(190,131){\usebox{\plotpoint}}
\put(190,131){\usebox{\plotpoint}}
\put(190,131){\usebox{\plotpoint}}
\put(190,131){\usebox{\plotpoint}}
\put(190.0,131.0){\rule[-0.200pt]{23.608pt}{0.400pt}}
\put(288.0,131.0){\usebox{\plotpoint}}
\put(288.0,132.0){\rule[-0.200pt]{1.204pt}{0.400pt}}
\put(293.0,132.0){\usebox{\plotpoint}}
\put(293.0,133.0){\rule[-0.200pt]{0.723pt}{0.400pt}}
\put(296.0,133.0){\usebox{\plotpoint}}
\put(296.0,134.0){\rule[-0.200pt]{0.482pt}{0.400pt}}
\put(298.0,134.0){\usebox{\plotpoint}}
\put(298.0,135.0){\rule[-0.200pt]{0.482pt}{0.400pt}}
\put(300.0,135.0){\usebox{\plotpoint}}
\put(300.0,136.0){\usebox{\plotpoint}}
\put(301.0,136.0){\usebox{\plotpoint}}
\put(301.0,137.0){\rule[-0.200pt]{0.482pt}{0.400pt}}
\put(303.0,137.0){\usebox{\plotpoint}}
\put(303.0,138.0){\usebox{\plotpoint}}
\put(304.0,138.0){\usebox{\plotpoint}}
\put(304.0,139.0){\usebox{\plotpoint}}
\put(305.0,139.0){\usebox{\plotpoint}}
\put(305.0,140.0){\usebox{\plotpoint}}
\put(306.0,140.0){\usebox{\plotpoint}}
\put(306.0,141.0){\usebox{\plotpoint}}
\put(307.0,141.0){\usebox{\plotpoint}}
\put(307.0,142.0){\usebox{\plotpoint}}
\put(308.0,142.0){\rule[-0.200pt]{0.400pt}{0.482pt}}
\put(308.0,144.0){\usebox{\plotpoint}}
\put(309.0,144.0){\usebox{\plotpoint}}
\put(309.0,145.0){\usebox{\plotpoint}}
\put(310,145.67){\rule{0.241pt}{0.400pt}}
\multiput(310.00,145.17)(0.500,1.000){2}{\rule{0.120pt}{0.400pt}}
\put(310.0,145.0){\usebox{\plotpoint}}
\put(311,147){\usebox{\plotpoint}}
\put(311,147){\usebox{\plotpoint}}
\put(311,147){\usebox{\plotpoint}}
\put(311,147){\usebox{\plotpoint}}
\put(311,147){\usebox{\plotpoint}}
\put(311,147){\usebox{\plotpoint}}
\put(311,147){\usebox{\plotpoint}}
\put(311,147){\usebox{\plotpoint}}
\put(311,147){\usebox{\plotpoint}}
\put(311,147){\usebox{\plotpoint}}
\put(311,147){\usebox{\plotpoint}}
\put(311,147){\usebox{\plotpoint}}
\put(311,147){\usebox{\plotpoint}}
\put(311,147){\usebox{\plotpoint}}
\put(311,147){\usebox{\plotpoint}}
\put(311,147){\usebox{\plotpoint}}
\put(311,147){\usebox{\plotpoint}}
\put(311,147){\usebox{\plotpoint}}
\put(311,147){\usebox{\plotpoint}}
\put(311,147){\usebox{\plotpoint}}
\put(311,147){\usebox{\plotpoint}}
\put(311,147){\usebox{\plotpoint}}
\put(311,147){\usebox{\plotpoint}}
\put(311,147){\usebox{\plotpoint}}
\put(311,147){\usebox{\plotpoint}}
\put(311,147){\usebox{\plotpoint}}
\put(311,147){\usebox{\plotpoint}}
\put(311,147){\usebox{\plotpoint}}
\put(311,147){\usebox{\plotpoint}}
\put(311,147){\usebox{\plotpoint}}
\put(311,147){\usebox{\plotpoint}}
\put(311,147){\usebox{\plotpoint}}
\put(311,147){\usebox{\plotpoint}}
\put(311,147){\usebox{\plotpoint}}
\put(311,147){\usebox{\plotpoint}}
\put(311,147){\usebox{\plotpoint}}
\put(311,147){\usebox{\plotpoint}}
\put(311,147){\usebox{\plotpoint}}
\put(311.0,147.0){\usebox{\plotpoint}}
\put(311.0,148.0){\usebox{\plotpoint}}
\put(312.0,148.0){\rule[-0.200pt]{0.400pt}{0.482pt}}
\put(312.0,150.0){\usebox{\plotpoint}}
\put(313.0,150.0){\rule[-0.200pt]{0.400pt}{0.482pt}}
\put(313.0,152.0){\usebox{\plotpoint}}
\put(314.0,152.0){\usebox{\plotpoint}}
\put(314.0,153.0){\usebox{\plotpoint}}
\put(315.0,153.0){\rule[-0.200pt]{0.400pt}{0.482pt}}
\put(315.0,155.0){\usebox{\plotpoint}}
\put(316.0,155.0){\rule[-0.200pt]{0.400pt}{0.482pt}}
\put(316.0,157.0){\usebox{\plotpoint}}
\put(317.0,157.0){\rule[-0.200pt]{0.400pt}{0.723pt}}
\put(317.0,160.0){\usebox{\plotpoint}}
\put(318.0,160.0){\rule[-0.200pt]{0.400pt}{0.482pt}}
\put(318.0,162.0){\usebox{\plotpoint}}
\put(319.0,162.0){\rule[-0.200pt]{0.400pt}{0.482pt}}
\put(319.0,164.0){\usebox{\plotpoint}}
\put(320,165.67){\rule{0.241pt}{0.400pt}}
\multiput(320.00,165.17)(0.500,1.000){2}{\rule{0.120pt}{0.400pt}}
\put(320.0,164.0){\rule[-0.200pt]{0.400pt}{0.482pt}}
\put(321,167){\usebox{\plotpoint}}
\put(321,167){\usebox{\plotpoint}}
\put(321,167){\usebox{\plotpoint}}
\put(321,167){\usebox{\plotpoint}}
\put(321,167){\usebox{\plotpoint}}
\put(321,167){\usebox{\plotpoint}}
\put(321,167){\usebox{\plotpoint}}
\put(321,167){\usebox{\plotpoint}}
\put(321,167){\usebox{\plotpoint}}
\put(321,167){\usebox{\plotpoint}}
\put(321,167){\usebox{\plotpoint}}
\put(321,167){\usebox{\plotpoint}}
\put(321,167){\usebox{\plotpoint}}
\put(321,167){\usebox{\plotpoint}}
\put(321,167){\usebox{\plotpoint}}
\put(321,167){\usebox{\plotpoint}}
\put(321,167){\usebox{\plotpoint}}
\put(321,167){\usebox{\plotpoint}}
\put(321,167){\usebox{\plotpoint}}
\put(321,167){\usebox{\plotpoint}}
\put(321,167){\usebox{\plotpoint}}
\put(321,167){\usebox{\plotpoint}}
\put(321,167){\usebox{\plotpoint}}
\put(321,167){\usebox{\plotpoint}}
\put(321,167){\usebox{\plotpoint}}
\put(321.0,167.0){\rule[-0.200pt]{0.400pt}{0.482pt}}
\put(321.0,169.0){\usebox{\plotpoint}}
\put(322.0,169.0){\rule[-0.200pt]{0.400pt}{0.482pt}}
\put(322.0,171.0){\usebox{\plotpoint}}
\put(323.0,171.0){\rule[-0.200pt]{0.400pt}{0.723pt}}
\put(323.0,174.0){\usebox{\plotpoint}}
\put(324.0,174.0){\rule[-0.200pt]{0.400pt}{0.723pt}}
\put(324.0,177.0){\usebox{\plotpoint}}
\put(325.0,177.0){\rule[-0.200pt]{0.400pt}{0.723pt}}
\put(325.0,180.0){\usebox{\plotpoint}}
\put(326.0,180.0){\rule[-0.200pt]{0.400pt}{0.723pt}}
\put(326.0,183.0){\usebox{\plotpoint}}
\put(327.0,183.0){\rule[-0.200pt]{0.400pt}{0.723pt}}
\put(327.0,186.0){\usebox{\plotpoint}}
\put(328.0,186.0){\rule[-0.200pt]{0.400pt}{0.723pt}}
\put(328.0,189.0){\usebox{\plotpoint}}
\put(329.0,189.0){\rule[-0.200pt]{0.400pt}{0.723pt}}
\put(329.0,192.0){\usebox{\plotpoint}}
\put(330.0,192.0){\rule[-0.200pt]{0.400pt}{0.723pt}}
\put(330.0,195.0){\usebox{\plotpoint}}
\put(331.0,195.0){\rule[-0.200pt]{0.400pt}{0.723pt}}
\put(331.0,198.0){\usebox{\plotpoint}}
\put(332.0,198.0){\rule[-0.200pt]{0.400pt}{0.964pt}}
\put(332.0,202.0){\usebox{\plotpoint}}
\put(333.0,202.0){\rule[-0.200pt]{0.400pt}{0.723pt}}
\put(333.0,205.0){\usebox{\plotpoint}}
\put(334.0,205.0){\rule[-0.200pt]{0.400pt}{0.964pt}}
\put(334.0,209.0){\usebox{\plotpoint}}
\put(335.0,209.0){\rule[-0.200pt]{0.400pt}{0.964pt}}
\put(335.0,213.0){\usebox{\plotpoint}}
\put(336,215.67){\rule{0.241pt}{0.400pt}}
\multiput(336.00,215.17)(0.500,1.000){2}{\rule{0.120pt}{0.400pt}}
\put(336.0,213.0){\rule[-0.200pt]{0.400pt}{0.723pt}}
\put(337,217){\usebox{\plotpoint}}
\put(337,217){\usebox{\plotpoint}}
\put(337,217){\usebox{\plotpoint}}
\put(337,217){\usebox{\plotpoint}}
\put(337,217){\usebox{\plotpoint}}
\put(337,217){\usebox{\plotpoint}}
\put(337,217){\usebox{\plotpoint}}
\put(337,217){\usebox{\plotpoint}}
\put(337,217){\usebox{\plotpoint}}
\put(337,217){\usebox{\plotpoint}}
\put(337,217){\usebox{\plotpoint}}
\put(337,217){\usebox{\plotpoint}}
\put(337,217){\usebox{\plotpoint}}
\put(337,217){\usebox{\plotpoint}}
\put(337,217){\usebox{\plotpoint}}
\put(337,217){\usebox{\plotpoint}}
\put(337.0,217.0){\rule[-0.200pt]{0.400pt}{0.723pt}}
\put(337.0,220.0){\usebox{\plotpoint}}
\put(338.0,220.0){\rule[-0.200pt]{0.400pt}{0.964pt}}
\put(338.0,224.0){\usebox{\plotpoint}}
\put(339.0,224.0){\rule[-0.200pt]{0.400pt}{0.964pt}}
\put(339.0,228.0){\usebox{\plotpoint}}
\put(340.0,228.0){\rule[-0.200pt]{0.400pt}{0.964pt}}
\put(340.0,232.0){\usebox{\plotpoint}}
\put(341.0,232.0){\rule[-0.200pt]{0.400pt}{0.964pt}}
\put(341.0,236.0){\usebox{\plotpoint}}
\put(342.0,236.0){\rule[-0.200pt]{0.400pt}{0.964pt}}
\put(342.0,240.0){\usebox{\plotpoint}}
\put(343.0,240.0){\rule[-0.200pt]{0.400pt}{0.964pt}}
\put(343.0,244.0){\usebox{\plotpoint}}
\put(344.0,244.0){\rule[-0.200pt]{0.400pt}{0.964pt}}
\put(344.0,248.0){\usebox{\plotpoint}}
\put(345.0,248.0){\rule[-0.200pt]{0.400pt}{0.964pt}}
\put(345.0,252.0){\usebox{\plotpoint}}
\put(346.0,252.0){\rule[-0.200pt]{0.400pt}{1.204pt}}
\put(346.0,257.0){\usebox{\plotpoint}}
\put(347.0,257.0){\rule[-0.200pt]{0.400pt}{0.964pt}}
\put(347.0,261.0){\usebox{\plotpoint}}
\put(348.0,261.0){\rule[-0.200pt]{0.400pt}{0.964pt}}
\put(348.0,265.0){\usebox{\plotpoint}}
\put(349.0,265.0){\rule[-0.200pt]{0.400pt}{0.964pt}}
\put(349.0,269.0){\usebox{\plotpoint}}
\put(350.0,269.0){\rule[-0.200pt]{0.400pt}{0.964pt}}
\put(350.0,273.0){\usebox{\plotpoint}}
\put(351.0,273.0){\rule[-0.200pt]{0.400pt}{1.204pt}}
\put(351.0,278.0){\usebox{\plotpoint}}
\put(352.0,278.0){\rule[-0.200pt]{0.400pt}{1.204pt}}
\put(352.0,283.0){\usebox{\plotpoint}}
\put(353.0,283.0){\rule[-0.200pt]{0.400pt}{0.964pt}}
\put(353.0,287.0){\usebox{\plotpoint}}
\put(354.0,287.0){\rule[-0.200pt]{0.400pt}{1.445pt}}
\put(354.0,293.0){\usebox{\plotpoint}}
\put(355.0,293.0){\rule[-0.200pt]{0.400pt}{1.204pt}}
\put(355.0,298.0){\usebox{\plotpoint}}
\put(356.0,298.0){\rule[-0.200pt]{0.400pt}{0.964pt}}
\put(356.0,302.0){\usebox{\plotpoint}}
\put(357.0,302.0){\usebox{\plotpoint}}
\put(357.0,302.0){\usebox{\plotpoint}}
\put(357.0,302.0){\rule[-0.200pt]{0.400pt}{1.204pt}}
\put(357.0,307.0){\usebox{\plotpoint}}
\put(358.0,307.0){\rule[-0.200pt]{0.400pt}{0.964pt}}
\put(358.0,310.0){\usebox{\plotpoint}}
\put(358.0,310.0){\rule[-0.200pt]{0.400pt}{0.482pt}}
\put(358.0,312.0){\usebox{\plotpoint}}
\put(359.0,312.0){\rule[-0.200pt]{0.400pt}{0.964pt}}
\put(359.0,316.0){\usebox{\plotpoint}}
\put(360.0,316.0){\rule[-0.200pt]{0.400pt}{0.964pt}}
\put(360.0,320.0){\usebox{\plotpoint}}
\put(361.0,320.0){\usebox{\plotpoint}}
\put(361.0,320.0){\usebox{\plotpoint}}
\put(361.0,320.0){\rule[-0.200pt]{0.400pt}{1.204pt}}
\put(361.0,325.0){\usebox{\plotpoint}}
\put(362.0,325.0){\rule[-0.200pt]{0.400pt}{1.204pt}}
\put(362.0,330.0){\usebox{\plotpoint}}
\put(363.0,330.0){\rule[-0.200pt]{0.400pt}{0.723pt}}
\put(363.0,332.0){\usebox{\plotpoint}}
\put(363.0,332.0){\rule[-0.200pt]{0.400pt}{0.482pt}}
\put(363.0,333.0){\usebox{\plotpoint}}
\put(363,333.67){\rule{0.241pt}{0.400pt}}
\multiput(363.00,333.17)(0.500,1.000){2}{\rule{0.120pt}{0.400pt}}
\put(363.0,333.0){\usebox{\plotpoint}}
\put(364.0,334.0){\usebox{\plotpoint}}
\put(364.0,334.0){\rule[-0.200pt]{0.400pt}{1.204pt}}
\put(364.0,339.0){\usebox{\plotpoint}}
\put(365.0,339.0){\rule[-0.200pt]{0.400pt}{1.204pt}}
\put(365.0,344.0){\usebox{\plotpoint}}
\put(366.0,344.0){\usebox{\plotpoint}}
\put(366.0,344.0){\usebox{\plotpoint}}
\put(366.0,344.0){\rule[-0.200pt]{0.400pt}{0.964pt}}
\put(366.0,348.0){\usebox{\plotpoint}}
\put(367,352.67){\rule{0.241pt}{0.400pt}}
\multiput(367.00,353.17)(0.500,-1.000){2}{\rule{0.120pt}{0.400pt}}
\put(367.0,348.0){\rule[-0.200pt]{0.400pt}{1.445pt}}
\put(368,353){\usebox{\plotpoint}}
\put(368.0,353.0){\rule[-0.200pt]{0.400pt}{1.204pt}}
\put(368.0,357.0){\usebox{\plotpoint}}
\put(368.0,357.0){\rule[-0.200pt]{0.400pt}{0.723pt}}
\put(368.0,360.0){\usebox{\plotpoint}}
\put(369.0,360.0){\rule[-0.200pt]{0.400pt}{0.482pt}}
\put(369.0,361.0){\usebox{\plotpoint}}
\put(369,363.67){\rule{0.241pt}{0.400pt}}
\multiput(369.00,363.17)(0.500,1.000){2}{\rule{0.120pt}{0.400pt}}
\put(369.0,361.0){\rule[-0.200pt]{0.400pt}{0.723pt}}
\put(370,365){\usebox{\plotpoint}}
\put(370,365){\usebox{\plotpoint}}
\put(370,365){\usebox{\plotpoint}}
\put(370,365){\usebox{\plotpoint}}
\put(370,365){\usebox{\plotpoint}}
\put(370,365){\usebox{\plotpoint}}
\put(370,365){\usebox{\plotpoint}}
\put(370,365){\usebox{\plotpoint}}
\put(370,365){\usebox{\plotpoint}}
\put(370,365){\usebox{\plotpoint}}
\put(370,365){\usebox{\plotpoint}}
\put(370.0,365.0){\usebox{\plotpoint}}
\put(370.0,365.0){\usebox{\plotpoint}}
\put(370.0,365.0){\rule[-0.200pt]{0.400pt}{1.445pt}}
\put(370.0,370.0){\usebox{\plotpoint}}
\put(370.0,370.0){\rule[-0.200pt]{0.400pt}{0.482pt}}
\put(370.0,372.0){\usebox{\plotpoint}}
\put(371,375.67){\rule{0.241pt}{0.400pt}}
\multiput(371.00,375.17)(0.500,1.000){2}{\rule{0.120pt}{0.400pt}}
\put(371.0,372.0){\rule[-0.200pt]{0.400pt}{0.964pt}}
\put(372,377){\usebox{\plotpoint}}
\put(372,377){\usebox{\plotpoint}}
\put(372,377){\usebox{\plotpoint}}
\put(372,377){\usebox{\plotpoint}}
\put(372,377){\usebox{\plotpoint}}
\put(372,377){\usebox{\plotpoint}}
\put(372,377){\usebox{\plotpoint}}
\put(372,377){\usebox{\plotpoint}}
\put(372,377){\usebox{\plotpoint}}
\put(372.0,377.0){\usebox{\plotpoint}}
\put(372.0,377.0){\usebox{\plotpoint}}
\put(372.0,377.0){\rule[-0.200pt]{0.400pt}{0.482pt}}
\put(372.0,378.0){\usebox{\plotpoint}}
\put(372.0,378.0){\rule[-0.200pt]{0.400pt}{1.204pt}}
\put(372.0,382.0){\usebox{\plotpoint}}
\put(372.0,382.0){\usebox{\plotpoint}}
\put(373.0,382.0){\rule[-0.200pt]{0.400pt}{1.927pt}}
\put(373.0,390.0){\usebox{\plotpoint}}
\put(374.0,390.0){\rule[-0.200pt]{0.400pt}{0.482pt}}
\put(374.0,391.0){\usebox{\plotpoint}}
\put(374.0,391.0){\rule[-0.200pt]{0.400pt}{1.204pt}}
\put(374.0,396.0){\usebox{\plotpoint}}
\put(375.0,396.0){\usebox{\plotpoint}}
\put(375.0,396.0){\usebox{\plotpoint}}
\put(375.0,396.0){\rule[-0.200pt]{0.400pt}{0.964pt}}
\put(375.0,399.0){\usebox{\plotpoint}}
\put(375.0,399.0){\rule[-0.200pt]{0.400pt}{1.204pt}}
\put(375.0,404.0){\usebox{\plotpoint}}
\put(376.0,403.0){\usebox{\plotpoint}}
\put(376.0,403.0){\rule[-0.200pt]{0.400pt}{1.204pt}}
\put(376.0,407.0){\usebox{\plotpoint}}
\put(376.0,407.0){\usebox{\plotpoint}}
\put(377.0,406.0){\usebox{\plotpoint}}
\put(377.0,406.0){\usebox{\plotpoint}}
\put(378.0,405.0){\usebox{\plotpoint}}
\put(378.0,405.0){\rule[-0.200pt]{0.400pt}{0.964pt}}
\put(378.0,408.0){\usebox{\plotpoint}}
\put(378.0,408.0){\rule[-0.200pt]{0.400pt}{0.482pt}}
\put(378.0,410.0){\usebox{\plotpoint}}
\put(379.0,410.0){\rule[-0.200pt]{0.400pt}{0.482pt}}
\put(379.0,411.0){\usebox{\plotpoint}}
\put(379.0,411.0){\rule[-0.200pt]{0.400pt}{1.927pt}}
\put(379.0,418.0){\usebox{\plotpoint}}
\put(379.0,418.0){\usebox{\plotpoint}}
\put(380.0,417.0){\usebox{\plotpoint}}
\put(380.0,417.0){\rule[-0.200pt]{0.400pt}{2.409pt}}
\put(380.0,427.0){\usebox{\plotpoint}}
\put(381.0,427.0){\rule[-0.200pt]{0.400pt}{1.927pt}}
\put(381.0,434.0){\usebox{\plotpoint}}
\put(381.0,434.0){\rule[-0.200pt]{0.400pt}{1.445pt}}
\put(381.0,440.0){\usebox{\plotpoint}}
\put(382.0,440.0){\rule[-0.200pt]{0.400pt}{1.686pt}}
\put(382.0,445.0){\rule[-0.200pt]{0.400pt}{0.482pt}}
\put(382.0,445.0){\usebox{\plotpoint}}
\put(383.0,443.0){\rule[-0.200pt]{0.400pt}{0.482pt}}
\put(383.0,443.0){\usebox{\plotpoint}}
\put(384.0,443.0){\rule[-0.200pt]{0.400pt}{1.927pt}}
\put(384.0,451.0){\usebox{\plotpoint}}
\put(385.0,449.0){\rule[-0.200pt]{0.400pt}{0.482pt}}
\put(385.0,449.0){\usebox{\plotpoint}}
\put(386.0,447.0){\rule[-0.200pt]{0.400pt}{0.482pt}}
\put(386.0,447.0){\usebox{\plotpoint}}
\put(387.0,446.0){\usebox{\plotpoint}}
\put(387.0,446.0){\usebox{\plotpoint}}
\put(388.0,446.0){\rule[-0.200pt]{0.400pt}{2.891pt}}
\put(388.0,456.0){\rule[-0.200pt]{0.400pt}{0.482pt}}
\put(388.0,456.0){\usebox{\plotpoint}}
\put(389.0,455.0){\usebox{\plotpoint}}
\put(389.0,455.0){\usebox{\plotpoint}}
\put(390.0,453.0){\rule[-0.200pt]{0.400pt}{0.482pt}}
\put(390.0,453.0){\usebox{\plotpoint}}
\put(391.0,452.0){\usebox{\plotpoint}}
\put(391.0,452.0){\usebox{\plotpoint}}
\put(392.0,451.0){\usebox{\plotpoint}}
\put(392.0,451.0){\rule[-0.200pt]{0.400pt}{5.059pt}}
\put(392.0,470.0){\rule[-0.200pt]{0.400pt}{0.482pt}}
\put(392.0,470.0){\usebox{\plotpoint}}
\put(393.0,469.0){\usebox{\plotpoint}}
\put(393.0,469.0){\usebox{\plotpoint}}
\put(394.0,467.0){\rule[-0.200pt]{0.400pt}{0.482pt}}
\put(394.0,467.0){\usebox{\plotpoint}}
\put(395.0,465.0){\rule[-0.200pt]{0.400pt}{0.482pt}}
\put(395.0,465.0){\usebox{\plotpoint}}
\put(396.0,464.0){\usebox{\plotpoint}}
\put(396.0,464.0){\usebox{\plotpoint}}
\put(397.0,462.0){\rule[-0.200pt]{0.400pt}{0.482pt}}
\put(397.0,462.0){\usebox{\plotpoint}}
\put(398.0,461.0){\usebox{\plotpoint}}
\put(398.0,461.0){\usebox{\plotpoint}}
\put(399.0,459.0){\rule[-0.200pt]{0.400pt}{0.482pt}}
\put(399.0,459.0){\usebox{\plotpoint}}
\put(400.0,458.0){\usebox{\plotpoint}}
\put(400.0,458.0){\usebox{\plotpoint}}
\put(401.0,456.0){\rule[-0.200pt]{0.400pt}{0.482pt}}
\put(401.0,456.0){\usebox{\plotpoint}}
\put(679,418){\makebox(0,0)[r]{RSH-II}}
\multiput(699,418)(20.756,0.000){5}{\usebox{\plotpoint}}
\put(799,418){\usebox{\plotpoint}}
\put(190,131){\usebox{\plotpoint}}
\put(190.00,131.00){\usebox{\plotpoint}}
\put(210.76,131.00){\usebox{\plotpoint}}
\put(231.51,131.00){\usebox{\plotpoint}}
\put(252.27,131.00){\usebox{\plotpoint}}
\put(273.02,131.00){\usebox{\plotpoint}}
\put(293.78,131.00){\usebox{\plotpoint}}
\put(314.53,131.00){\usebox{\plotpoint}}
\put(335.29,131.00){\usebox{\plotpoint}}
\put(356.04,131.00){\usebox{\plotpoint}}
\put(374.80,133.00){\usebox{\plotpoint}}
\put(393.56,135.00){\usebox{\plotpoint}}
\put(411.31,138.00){\usebox{\plotpoint}}
\put(428.07,142.00){\usebox{\plotpoint}}
\put(443.82,147.00){\usebox{\plotpoint}}
\put(459.58,152.00){\usebox{\plotpoint}}
\put(474.33,158.00){\usebox{\plotpoint}}
\put(490.00,163.09){\usebox{\plotpoint}}
\put(504.84,169.00){\usebox{\plotpoint}}
\put(519.60,175.00){\usebox{\plotpoint}}
\put(534.35,181.00){\usebox{\plotpoint}}
\put(548.11,186.00){\usebox{\plotpoint}}
\put(559.87,191.00){\usebox{\plotpoint}}
\put(571.62,196.00){\usebox{\plotpoint}}
\put(585.38,201.00){\usebox{\plotpoint}}
\put(599.13,206.00){\usebox{\plotpoint}}
\put(614.47,212.00){\usebox{\plotpoint}}
\put(630.23,217.00){\usebox{\plotpoint}}
\put(642.00,221.98){\usebox{\plotpoint}}
\put(653.74,227.00){\usebox{\plotpoint}}
\put(665.50,230.00){\usebox{\plotpoint}}
\put(675.25,235.00){\usebox{\plotpoint}}
\put(690.01,241.00){\usebox{\plotpoint}}
\put(701.76,244.00){\usebox{\plotpoint}}
\put(708.00,254.52){\usebox{\plotpoint}}
\put(717.00,261.73){\usebox{\plotpoint}}
\put(723,260){\usebox{\plotpoint}}
\put(190.0,131.0){\rule[-0.200pt]{0.400pt}{88.651pt}}
\put(190.0,131.0){\rule[-0.200pt]{156.344pt}{0.400pt}}
\put(839.0,131.0){\rule[-0.200pt]{0.400pt}{88.651pt}}
\put(190.0,499.0){\rule[-0.200pt]{156.344pt}{0.400pt}}
\end{picture}
\end{center}
\caption{The average convergence rate of RSH-I and RSH-II for maximising $x^2$.}
\label{f1-rate1c}
\end{figure}

The average convergence rate is different from   the  logarithmic progress rate,  $\ln f_t$, used in some references (for example, see Figures 8 and 9 in~\cite{salomon1998evolutionary}).   Such a logarithmic rate may provide an intuitive description of the  fitness change, but does not give a quantitative measure of the convergence rate itself. Let's demonstrate this by the following example.
 
\begin{example}
Consider RSH-II  for solving the following two   problems,   
$$\max \, x^2, \quad \max \,=10x^2, \quad x \in \{ 0,1, \cdots, 100 \}.
$$  

We run the algorithm for 100,000 times. The initial population is $\Phi_0=20$.  
Figure  \ref{f1-f3} depicts  that the logarithmic progress rate of RSH-II  on the function $10 x^2$ is lager than that on the function $x^2$. It is caused by the coefficient difference between  the two fitnesses.   The logarithmic progress rate is not a quantitative  measure  of the convergence rate.

\begin{figure}[ht]
\begin{center}
\setlength{\unitlength}{0.240900pt}
\ifx\plotpoint\undefined\newsavebox{\plotpoint}\fi
\begin{picture}(900,540)(0,0)
\sbox{\plotpoint}{\rule[-0.200pt]{0.400pt}{0.400pt}}%
\put(110.0,131.0){\rule[-0.200pt]{4.818pt}{0.400pt}}
\put(90,131){\makebox(0,0)[r]{ 5}}
\put(819.0,131.0){\rule[-0.200pt]{4.818pt}{0.400pt}}
\put(110.0,184.0){\rule[-0.200pt]{4.818pt}{0.400pt}}
\put(90,184){\makebox(0,0)[r]{ 6}}
\put(819.0,184.0){\rule[-0.200pt]{4.818pt}{0.400pt}}
\put(110.0,236.0){\rule[-0.200pt]{4.818pt}{0.400pt}}
\put(90,236){\makebox(0,0)[r]{ 7}}
\put(819.0,236.0){\rule[-0.200pt]{4.818pt}{0.400pt}}
\put(110.0,289.0){\rule[-0.200pt]{4.818pt}{0.400pt}}
\put(90,289){\makebox(0,0)[r]{ 8}}
\put(819.0,289.0){\rule[-0.200pt]{4.818pt}{0.400pt}}
\put(110.0,341.0){\rule[-0.200pt]{4.818pt}{0.400pt}}
\put(90,341){\makebox(0,0)[r]{ 9}}
\put(819.0,341.0){\rule[-0.200pt]{4.818pt}{0.400pt}}
\put(110.0,394.0){\rule[-0.200pt]{4.818pt}{0.400pt}}
\put(90,394){\makebox(0,0)[r]{ 10}}
\put(819.0,394.0){\rule[-0.200pt]{4.818pt}{0.400pt}}
\put(110.0,446.0){\rule[-0.200pt]{4.818pt}{0.400pt}}
\put(90,446){\makebox(0,0)[r]{ 11}}
\put(819.0,446.0){\rule[-0.200pt]{4.818pt}{0.400pt}}
\put(110.0,499.0){\rule[-0.200pt]{4.818pt}{0.400pt}}
\put(90,499){\makebox(0,0)[r]{ 12}}
\put(819.0,499.0){\rule[-0.200pt]{4.818pt}{0.400pt}}
\put(110.0,131.0){\rule[-0.200pt]{0.400pt}{4.818pt}}
\put(110,90){\makebox(0,0){ 0}}
\put(110.0,479.0){\rule[-0.200pt]{0.400pt}{4.818pt}}
\put(256.0,131.0){\rule[-0.200pt]{0.400pt}{4.818pt}}
\put(256,90){\makebox(0,0){ 4000}}
\put(256.0,479.0){\rule[-0.200pt]{0.400pt}{4.818pt}}
\put(402.0,131.0){\rule[-0.200pt]{0.400pt}{4.818pt}}
\put(402,90){\makebox(0,0){ 8000}}
\put(402.0,479.0){\rule[-0.200pt]{0.400pt}{4.818pt}}
\put(547.0,131.0){\rule[-0.200pt]{0.400pt}{4.818pt}}
\put(547,90){\makebox(0,0){ 12000}}
\put(547.0,479.0){\rule[-0.200pt]{0.400pt}{4.818pt}}
\put(693.0,131.0){\rule[-0.200pt]{0.400pt}{4.818pt}}
\put(693,90){\makebox(0,0){ 16000}}
\put(693.0,479.0){\rule[-0.200pt]{0.400pt}{4.818pt}}
\put(839.0,131.0){\rule[-0.200pt]{0.400pt}{4.818pt}}
\put(839,90){\makebox(0,0){ 20000}}
\put(839.0,479.0){\rule[-0.200pt]{0.400pt}{4.818pt}}
\put(110.0,131.0){\rule[-0.200pt]{0.400pt}{88.651pt}}
\put(110.0,131.0){\rule[-0.200pt]{175.616pt}{0.400pt}}
\put(839.0,131.0){\rule[-0.200pt]{0.400pt}{88.651pt}}
\put(110.0,499.0){\rule[-0.200pt]{175.616pt}{0.400pt}}
\put(474,29){\makebox(0,0){t}}
\put(679,459){\makebox(0,0)[r]{$x^2$}}
\put(699.0,459.0){\rule[-0.200pt]{24.090pt}{0.400pt}}
\put(110,183){\usebox{\plotpoint}}
\put(110,183){\usebox{\plotpoint}}
\put(110,183){\usebox{\plotpoint}}
\put(110,183){\usebox{\plotpoint}}
\put(110,183){\usebox{\plotpoint}}
\put(110,183){\usebox{\plotpoint}}
\put(110,183){\usebox{\plotpoint}}
\put(110,183){\usebox{\plotpoint}}
\put(110.0,183.0){\usebox{\plotpoint}}
\put(110.0,184.0){\usebox{\plotpoint}}
\put(111.0,184.0){\usebox{\plotpoint}}
\put(111.0,185.0){\usebox{\plotpoint}}
\put(112.0,185.0){\rule[-0.200pt]{0.400pt}{0.482pt}}
\put(112.0,187.0){\usebox{\plotpoint}}
\put(113.0,187.0){\usebox{\plotpoint}}
\put(113.0,188.0){\usebox{\plotpoint}}
\put(114.0,188.0){\rule[-0.200pt]{0.400pt}{0.482pt}}
\put(114.0,190.0){\usebox{\plotpoint}}
\put(115.0,190.0){\usebox{\plotpoint}}
\put(115.0,191.0){\usebox{\plotpoint}}
\put(116.0,191.0){\usebox{\plotpoint}}
\put(116.0,192.0){\usebox{\plotpoint}}
\put(117.0,192.0){\rule[-0.200pt]{0.400pt}{0.482pt}}
\put(117.0,194.0){\usebox{\plotpoint}}
\put(118.0,194.0){\usebox{\plotpoint}}
\put(118.0,195.0){\usebox{\plotpoint}}
\put(119.0,195.0){\usebox{\plotpoint}}
\put(119.0,196.0){\usebox{\plotpoint}}
\put(120.0,196.0){\rule[-0.200pt]{0.400pt}{0.482pt}}
\put(120.0,198.0){\usebox{\plotpoint}}
\put(121.0,198.0){\usebox{\plotpoint}}
\put(121.0,199.0){\usebox{\plotpoint}}
\put(122.0,199.0){\usebox{\plotpoint}}
\put(122.0,200.0){\usebox{\plotpoint}}
\put(123.0,200.0){\usebox{\plotpoint}}
\put(123.0,201.0){\usebox{\plotpoint}}
\put(124.0,201.0){\rule[-0.200pt]{0.400pt}{0.482pt}}
\put(124.0,203.0){\usebox{\plotpoint}}
\put(125.0,203.0){\usebox{\plotpoint}}
\put(125.0,204.0){\usebox{\plotpoint}}
\put(126.0,204.0){\usebox{\plotpoint}}
\put(126.0,205.0){\usebox{\plotpoint}}
\put(127.0,205.0){\usebox{\plotpoint}}
\put(127.0,206.0){\usebox{\plotpoint}}
\put(128.0,206.0){\usebox{\plotpoint}}
\put(128.0,207.0){\usebox{\plotpoint}}
\put(129,207.67){\rule{0.241pt}{0.400pt}}
\multiput(129.00,207.17)(0.500,1.000){2}{\rule{0.120pt}{0.400pt}}
\put(129.0,207.0){\usebox{\plotpoint}}
\put(130,209){\usebox{\plotpoint}}
\put(130,209){\usebox{\plotpoint}}
\put(130,209){\usebox{\plotpoint}}
\put(130,209){\usebox{\plotpoint}}
\put(130,209){\usebox{\plotpoint}}
\put(130,209){\usebox{\plotpoint}}
\put(130,209){\usebox{\plotpoint}}
\put(130,209){\usebox{\plotpoint}}
\put(130,209){\usebox{\plotpoint}}
\put(130,209){\usebox{\plotpoint}}
\put(130,209){\usebox{\plotpoint}}
\put(130,209){\usebox{\plotpoint}}
\put(130,209){\usebox{\plotpoint}}
\put(130,209){\usebox{\plotpoint}}
\put(130,209){\usebox{\plotpoint}}
\put(130,209){\usebox{\plotpoint}}
\put(130,209){\usebox{\plotpoint}}
\put(130,209){\usebox{\plotpoint}}
\put(130,209){\usebox{\plotpoint}}
\put(130,209){\usebox{\plotpoint}}
\put(130,209){\usebox{\plotpoint}}
\put(130,209){\usebox{\plotpoint}}
\put(130,209){\usebox{\plotpoint}}
\put(130.0,209.0){\usebox{\plotpoint}}
\put(130.0,210.0){\usebox{\plotpoint}}
\put(131.0,210.0){\usebox{\plotpoint}}
\put(131.0,211.0){\usebox{\plotpoint}}
\put(132.0,211.0){\usebox{\plotpoint}}
\put(132.0,212.0){\usebox{\plotpoint}}
\put(133.0,212.0){\usebox{\plotpoint}}
\put(133.0,213.0){\usebox{\plotpoint}}
\put(134.0,213.0){\usebox{\plotpoint}}
\put(134.0,214.0){\usebox{\plotpoint}}
\put(135.0,214.0){\usebox{\plotpoint}}
\put(135.0,215.0){\usebox{\plotpoint}}
\put(136.0,215.0){\usebox{\plotpoint}}
\put(136.0,216.0){\usebox{\plotpoint}}
\put(137.0,216.0){\usebox{\plotpoint}}
\put(137.0,217.0){\usebox{\plotpoint}}
\put(138.0,217.0){\usebox{\plotpoint}}
\put(138.0,218.0){\usebox{\plotpoint}}
\put(139.0,218.0){\usebox{\plotpoint}}
\put(139.0,219.0){\usebox{\plotpoint}}
\put(140.0,219.0){\usebox{\plotpoint}}
\put(140.0,220.0){\usebox{\plotpoint}}
\put(141,220.67){\rule{0.241pt}{0.400pt}}
\multiput(141.00,220.17)(0.500,1.000){2}{\rule{0.120pt}{0.400pt}}
\put(141.0,220.0){\usebox{\plotpoint}}
\put(142,222){\usebox{\plotpoint}}
\put(142,222){\usebox{\plotpoint}}
\put(142,222){\usebox{\plotpoint}}
\put(142,222){\usebox{\plotpoint}}
\put(142,222){\usebox{\plotpoint}}
\put(142,222){\usebox{\plotpoint}}
\put(142,222){\usebox{\plotpoint}}
\put(142,222){\usebox{\plotpoint}}
\put(142,222){\usebox{\plotpoint}}
\put(142,222){\usebox{\plotpoint}}
\put(142,222){\usebox{\plotpoint}}
\put(142,222){\usebox{\plotpoint}}
\put(142,222){\usebox{\plotpoint}}
\put(142,222){\usebox{\plotpoint}}
\put(142,222){\usebox{\plotpoint}}
\put(142,222){\usebox{\plotpoint}}
\put(142,222){\usebox{\plotpoint}}
\put(142,222){\usebox{\plotpoint}}
\put(142,222){\usebox{\plotpoint}}
\put(142,222){\usebox{\plotpoint}}
\put(142,222){\usebox{\plotpoint}}
\put(142,222){\usebox{\plotpoint}}
\put(142,222){\usebox{\plotpoint}}
\put(142,222){\usebox{\plotpoint}}
\put(142,222){\usebox{\plotpoint}}
\put(142,222){\usebox{\plotpoint}}
\put(142,221.67){\rule{0.241pt}{0.400pt}}
\multiput(142.00,221.17)(0.500,1.000){2}{\rule{0.120pt}{0.400pt}}
\put(143,223){\usebox{\plotpoint}}
\put(143,223){\usebox{\plotpoint}}
\put(143,223){\usebox{\plotpoint}}
\put(143,223){\usebox{\plotpoint}}
\put(143,223){\usebox{\plotpoint}}
\put(143,223){\usebox{\plotpoint}}
\put(143,223){\usebox{\plotpoint}}
\put(143,223){\usebox{\plotpoint}}
\put(143,223){\usebox{\plotpoint}}
\put(143,223){\usebox{\plotpoint}}
\put(143,223){\usebox{\plotpoint}}
\put(143,223){\usebox{\plotpoint}}
\put(143,223){\usebox{\plotpoint}}
\put(143,223){\usebox{\plotpoint}}
\put(143,223){\usebox{\plotpoint}}
\put(143,223){\usebox{\plotpoint}}
\put(143,223){\usebox{\plotpoint}}
\put(143,223){\usebox{\plotpoint}}
\put(143,223){\usebox{\plotpoint}}
\put(143,223){\usebox{\plotpoint}}
\put(143,223){\usebox{\plotpoint}}
\put(143,223){\usebox{\plotpoint}}
\put(143,223){\usebox{\plotpoint}}
\put(143,223){\usebox{\plotpoint}}
\put(143,223){\usebox{\plotpoint}}
\put(143,223){\usebox{\plotpoint}}
\put(143,223){\usebox{\plotpoint}}
\put(143,222.67){\rule{0.241pt}{0.400pt}}
\multiput(143.00,222.17)(0.500,1.000){2}{\rule{0.120pt}{0.400pt}}
\put(144,224){\usebox{\plotpoint}}
\put(144,224){\usebox{\plotpoint}}
\put(144,224){\usebox{\plotpoint}}
\put(144,224){\usebox{\plotpoint}}
\put(144,224){\usebox{\plotpoint}}
\put(144,224){\usebox{\plotpoint}}
\put(144,224){\usebox{\plotpoint}}
\put(144,224){\usebox{\plotpoint}}
\put(144,224){\usebox{\plotpoint}}
\put(144,224){\usebox{\plotpoint}}
\put(144,224){\usebox{\plotpoint}}
\put(144,224){\usebox{\plotpoint}}
\put(144,224){\usebox{\plotpoint}}
\put(144,224){\usebox{\plotpoint}}
\put(144,224){\usebox{\plotpoint}}
\put(144,224){\usebox{\plotpoint}}
\put(144,224){\usebox{\plotpoint}}
\put(144,224){\usebox{\plotpoint}}
\put(144,224){\usebox{\plotpoint}}
\put(144,224){\usebox{\plotpoint}}
\put(144,224){\usebox{\plotpoint}}
\put(144,224){\usebox{\plotpoint}}
\put(144,224){\usebox{\plotpoint}}
\put(144,224){\usebox{\plotpoint}}
\put(144,224){\usebox{\plotpoint}}
\put(144,224){\usebox{\plotpoint}}
\put(144,223.67){\rule{0.241pt}{0.400pt}}
\multiput(144.00,223.17)(0.500,1.000){2}{\rule{0.120pt}{0.400pt}}
\put(145,225){\usebox{\plotpoint}}
\put(145,225){\usebox{\plotpoint}}
\put(145,225){\usebox{\plotpoint}}
\put(145,225){\usebox{\plotpoint}}
\put(145,225){\usebox{\plotpoint}}
\put(145,225){\usebox{\plotpoint}}
\put(145,225){\usebox{\plotpoint}}
\put(145,225){\usebox{\plotpoint}}
\put(145,225){\usebox{\plotpoint}}
\put(145,225){\usebox{\plotpoint}}
\put(145,225){\usebox{\plotpoint}}
\put(145,225){\usebox{\plotpoint}}
\put(145,225){\usebox{\plotpoint}}
\put(145,225){\usebox{\plotpoint}}
\put(145,225){\usebox{\plotpoint}}
\put(145,225){\usebox{\plotpoint}}
\put(145,225){\usebox{\plotpoint}}
\put(145,225){\usebox{\plotpoint}}
\put(145,225){\usebox{\plotpoint}}
\put(145,225){\usebox{\plotpoint}}
\put(145,225){\usebox{\plotpoint}}
\put(145,225){\usebox{\plotpoint}}
\put(145,225){\usebox{\plotpoint}}
\put(145,225){\usebox{\plotpoint}}
\put(145,225){\usebox{\plotpoint}}
\put(145,225){\usebox{\plotpoint}}
\put(145.0,225.0){\usebox{\plotpoint}}
\put(146.0,225.0){\usebox{\plotpoint}}
\put(146.0,226.0){\usebox{\plotpoint}}
\put(147.0,226.0){\usebox{\plotpoint}}
\put(147.0,227.0){\usebox{\plotpoint}}
\put(148.0,227.0){\usebox{\plotpoint}}
\put(148.0,228.0){\usebox{\plotpoint}}
\put(149.0,228.0){\usebox{\plotpoint}}
\put(149.0,229.0){\usebox{\plotpoint}}
\put(150.0,229.0){\usebox{\plotpoint}}
\put(150.0,230.0){\usebox{\plotpoint}}
\put(151.0,230.0){\usebox{\plotpoint}}
\put(151.0,231.0){\usebox{\plotpoint}}
\put(152.0,231.0){\usebox{\plotpoint}}
\put(152.0,232.0){\usebox{\plotpoint}}
\put(153.0,232.0){\usebox{\plotpoint}}
\put(153.0,233.0){\usebox{\plotpoint}}
\put(154.0,233.0){\usebox{\plotpoint}}
\put(154.0,234.0){\usebox{\plotpoint}}
\put(155.0,234.0){\usebox{\plotpoint}}
\put(155.0,235.0){\usebox{\plotpoint}}
\put(156.0,235.0){\usebox{\plotpoint}}
\put(156.0,236.0){\usebox{\plotpoint}}
\put(157.0,236.0){\usebox{\plotpoint}}
\put(157.0,237.0){\rule[-0.200pt]{0.482pt}{0.400pt}}
\put(159.0,237.0){\usebox{\plotpoint}}
\put(159.0,238.0){\usebox{\plotpoint}}
\put(160.0,238.0){\usebox{\plotpoint}}
\put(160.0,239.0){\usebox{\plotpoint}}
\put(161.0,239.0){\usebox{\plotpoint}}
\put(161.0,240.0){\usebox{\plotpoint}}
\put(162.0,240.0){\usebox{\plotpoint}}
\put(162.0,241.0){\usebox{\plotpoint}}
\put(163.0,241.0){\usebox{\plotpoint}}
\put(163.0,242.0){\usebox{\plotpoint}}
\put(164.0,242.0){\usebox{\plotpoint}}
\put(164.0,243.0){\rule[-0.200pt]{0.482pt}{0.400pt}}
\put(166.0,243.0){\usebox{\plotpoint}}
\put(166.0,244.0){\usebox{\plotpoint}}
\put(167.0,244.0){\usebox{\plotpoint}}
\put(167.0,245.0){\usebox{\plotpoint}}
\put(168.0,245.0){\usebox{\plotpoint}}
\put(168.0,246.0){\usebox{\plotpoint}}
\put(169.0,246.0){\usebox{\plotpoint}}
\put(169.0,247.0){\rule[-0.200pt]{0.482pt}{0.400pt}}
\put(171.0,247.0){\usebox{\plotpoint}}
\put(171.0,248.0){\usebox{\plotpoint}}
\put(172.0,248.0){\usebox{\plotpoint}}
\put(172.0,249.0){\usebox{\plotpoint}}
\put(173.0,249.0){\usebox{\plotpoint}}
\put(173.0,250.0){\usebox{\plotpoint}}
\put(174.0,250.0){\usebox{\plotpoint}}
\put(174.0,251.0){\rule[-0.200pt]{0.482pt}{0.400pt}}
\put(176.0,251.0){\usebox{\plotpoint}}
\put(176.0,252.0){\usebox{\plotpoint}}
\put(177.0,252.0){\usebox{\plotpoint}}
\put(177.0,253.0){\usebox{\plotpoint}}
\put(178.0,253.0){\usebox{\plotpoint}}
\put(178.0,254.0){\rule[-0.200pt]{0.482pt}{0.400pt}}
\put(180.0,254.0){\usebox{\plotpoint}}
\put(180.0,255.0){\usebox{\plotpoint}}
\put(181.0,255.0){\usebox{\plotpoint}}
\put(181.0,256.0){\rule[-0.200pt]{0.482pt}{0.400pt}}
\put(183.0,256.0){\usebox{\plotpoint}}
\put(183.0,257.0){\usebox{\plotpoint}}
\put(184.0,257.0){\usebox{\plotpoint}}
\put(184.0,258.0){\usebox{\plotpoint}}
\put(185.0,258.0){\usebox{\plotpoint}}
\put(185.0,259.0){\rule[-0.200pt]{0.482pt}{0.400pt}}
\put(187.0,259.0){\usebox{\plotpoint}}
\put(187.0,260.0){\usebox{\plotpoint}}
\put(188.0,260.0){\usebox{\plotpoint}}
\put(188.0,261.0){\rule[-0.200pt]{0.482pt}{0.400pt}}
\put(190.0,261.0){\usebox{\plotpoint}}
\put(190.0,262.0){\usebox{\plotpoint}}
\put(191.0,262.0){\usebox{\plotpoint}}
\put(191.0,263.0){\rule[-0.200pt]{0.482pt}{0.400pt}}
\put(193.0,263.0){\usebox{\plotpoint}}
\put(193.0,264.0){\usebox{\plotpoint}}
\put(194.0,264.0){\usebox{\plotpoint}}
\put(194.0,265.0){\rule[-0.200pt]{0.482pt}{0.400pt}}
\put(196.0,265.0){\usebox{\plotpoint}}
\put(196.0,266.0){\usebox{\plotpoint}}
\put(197.0,266.0){\usebox{\plotpoint}}
\put(197.0,267.0){\rule[-0.200pt]{0.482pt}{0.400pt}}
\put(199.0,267.0){\usebox{\plotpoint}}
\put(199.0,268.0){\usebox{\plotpoint}}
\put(200.0,268.0){\usebox{\plotpoint}}
\put(200.0,269.0){\rule[-0.200pt]{0.482pt}{0.400pt}}
\put(202.0,269.0){\usebox{\plotpoint}}
\put(202.0,270.0){\usebox{\plotpoint}}
\put(203.0,270.0){\usebox{\plotpoint}}
\put(203.0,271.0){\rule[-0.200pt]{0.482pt}{0.400pt}}
\put(205.0,271.0){\usebox{\plotpoint}}
\put(205.0,272.0){\usebox{\plotpoint}}
\put(206.0,272.0){\usebox{\plotpoint}}
\put(206.0,273.0){\rule[-0.200pt]{0.482pt}{0.400pt}}
\put(208.0,273.0){\usebox{\plotpoint}}
\put(208.0,274.0){\rule[-0.200pt]{0.482pt}{0.400pt}}
\put(210.0,274.0){\usebox{\plotpoint}}
\put(210.0,275.0){\usebox{\plotpoint}}
\put(211.0,275.0){\usebox{\plotpoint}}
\put(211.0,276.0){\rule[-0.200pt]{0.482pt}{0.400pt}}
\put(213.0,276.0){\usebox{\plotpoint}}
\put(213.0,277.0){\rule[-0.200pt]{0.482pt}{0.400pt}}
\put(215.0,277.0){\usebox{\plotpoint}}
\put(215.0,278.0){\usebox{\plotpoint}}
\put(216.0,278.0){\usebox{\plotpoint}}
\put(216.0,279.0){\rule[-0.200pt]{0.482pt}{0.400pt}}
\put(218.0,279.0){\usebox{\plotpoint}}
\put(218.0,280.0){\rule[-0.200pt]{0.482pt}{0.400pt}}
\put(220.0,280.0){\usebox{\plotpoint}}
\put(220.0,281.0){\rule[-0.200pt]{0.482pt}{0.400pt}}
\put(222.0,281.0){\usebox{\plotpoint}}
\put(222.0,282.0){\usebox{\plotpoint}}
\put(223.0,282.0){\usebox{\plotpoint}}
\put(223.0,283.0){\rule[-0.200pt]{0.482pt}{0.400pt}}
\put(225.0,283.0){\usebox{\plotpoint}}
\put(225.0,284.0){\rule[-0.200pt]{0.482pt}{0.400pt}}
\put(227.0,284.0){\usebox{\plotpoint}}
\put(227.0,285.0){\rule[-0.200pt]{0.482pt}{0.400pt}}
\put(229.0,285.0){\usebox{\plotpoint}}
\put(229.0,286.0){\rule[-0.200pt]{0.482pt}{0.400pt}}
\put(231.0,286.0){\usebox{\plotpoint}}
\put(231.0,287.0){\rule[-0.200pt]{0.482pt}{0.400pt}}
\put(233.0,287.0){\usebox{\plotpoint}}
\put(233.0,288.0){\usebox{\plotpoint}}
\put(234.0,288.0){\usebox{\plotpoint}}
\put(234.0,289.0){\rule[-0.200pt]{0.482pt}{0.400pt}}
\put(236.0,289.0){\usebox{\plotpoint}}
\put(236.0,290.0){\rule[-0.200pt]{0.482pt}{0.400pt}}
\put(238.0,290.0){\usebox{\plotpoint}}
\put(238.0,291.0){\rule[-0.200pt]{0.482pt}{0.400pt}}
\put(240.0,291.0){\usebox{\plotpoint}}
\put(240.0,292.0){\rule[-0.200pt]{0.482pt}{0.400pt}}
\put(242.0,292.0){\usebox{\plotpoint}}
\put(242.0,293.0){\rule[-0.200pt]{0.482pt}{0.400pt}}
\put(244.0,293.0){\usebox{\plotpoint}}
\put(244.0,294.0){\rule[-0.200pt]{0.482pt}{0.400pt}}
\put(246.0,294.0){\usebox{\plotpoint}}
\put(246.0,295.0){\rule[-0.200pt]{0.482pt}{0.400pt}}
\put(248.0,295.0){\usebox{\plotpoint}}
\put(248.0,296.0){\rule[-0.200pt]{0.482pt}{0.400pt}}
\put(250.0,296.0){\usebox{\plotpoint}}
\put(250.0,297.0){\rule[-0.200pt]{0.482pt}{0.400pt}}
\put(252.0,297.0){\usebox{\plotpoint}}
\put(252.0,298.0){\rule[-0.200pt]{0.482pt}{0.400pt}}
\put(254.0,298.0){\usebox{\plotpoint}}
\put(254.0,299.0){\rule[-0.200pt]{0.482pt}{0.400pt}}
\put(256.0,299.0){\usebox{\plotpoint}}
\put(256.0,300.0){\rule[-0.200pt]{0.482pt}{0.400pt}}
\put(258.0,300.0){\usebox{\plotpoint}}
\put(258.0,301.0){\rule[-0.200pt]{0.482pt}{0.400pt}}
\put(260.0,301.0){\usebox{\plotpoint}}
\put(260.0,302.0){\rule[-0.200pt]{0.723pt}{0.400pt}}
\put(263.0,302.0){\usebox{\plotpoint}}
\put(263.0,303.0){\rule[-0.200pt]{0.482pt}{0.400pt}}
\put(265.0,303.0){\usebox{\plotpoint}}
\put(265.0,304.0){\rule[-0.200pt]{0.482pt}{0.400pt}}
\put(267.0,304.0){\usebox{\plotpoint}}
\put(267.0,305.0){\rule[-0.200pt]{0.482pt}{0.400pt}}
\put(269.0,305.0){\usebox{\plotpoint}}
\put(269.0,306.0){\rule[-0.200pt]{0.482pt}{0.400pt}}
\put(271.0,306.0){\usebox{\plotpoint}}
\put(271.0,307.0){\rule[-0.200pt]{0.723pt}{0.400pt}}
\put(274.0,307.0){\usebox{\plotpoint}}
\put(274.0,308.0){\rule[-0.200pt]{0.482pt}{0.400pt}}
\put(276.0,308.0){\usebox{\plotpoint}}
\put(276.0,309.0){\rule[-0.200pt]{0.482pt}{0.400pt}}
\put(278.0,309.0){\usebox{\plotpoint}}
\put(280,309.67){\rule{0.241pt}{0.400pt}}
\multiput(280.00,309.17)(0.500,1.000){2}{\rule{0.120pt}{0.400pt}}
\put(278.0,310.0){\rule[-0.200pt]{0.482pt}{0.400pt}}
\put(281,311){\usebox{\plotpoint}}
\put(281,311){\usebox{\plotpoint}}
\put(281,311){\usebox{\plotpoint}}
\put(281,311){\usebox{\plotpoint}}
\put(281,311){\usebox{\plotpoint}}
\put(281,311){\usebox{\plotpoint}}
\put(281,311){\usebox{\plotpoint}}
\put(281,311){\usebox{\plotpoint}}
\put(281,311){\usebox{\plotpoint}}
\put(281,311){\usebox{\plotpoint}}
\put(281,311){\usebox{\plotpoint}}
\put(281,311){\usebox{\plotpoint}}
\put(281,311){\usebox{\plotpoint}}
\put(281,311){\usebox{\plotpoint}}
\put(281,311){\usebox{\plotpoint}}
\put(281,311){\usebox{\plotpoint}}
\put(281,311){\usebox{\plotpoint}}
\put(281,311){\usebox{\plotpoint}}
\put(281,311){\usebox{\plotpoint}}
\put(281,311){\usebox{\plotpoint}}
\put(281,311){\usebox{\plotpoint}}
\put(281,311){\usebox{\plotpoint}}
\put(281,311){\usebox{\plotpoint}}
\put(281,311){\usebox{\plotpoint}}
\put(281,311){\usebox{\plotpoint}}
\put(281,311){\usebox{\plotpoint}}
\put(281,311){\usebox{\plotpoint}}
\put(281.0,311.0){\rule[-0.200pt]{0.482pt}{0.400pt}}
\put(283.0,311.0){\usebox{\plotpoint}}
\put(283.0,312.0){\rule[-0.200pt]{0.482pt}{0.400pt}}
\put(285.0,312.0){\usebox{\plotpoint}}
\put(285.0,313.0){\rule[-0.200pt]{0.723pt}{0.400pt}}
\put(288.0,313.0){\usebox{\plotpoint}}
\put(288.0,314.0){\rule[-0.200pt]{0.482pt}{0.400pt}}
\put(290.0,314.0){\usebox{\plotpoint}}
\put(290.0,315.0){\rule[-0.200pt]{0.482pt}{0.400pt}}
\put(292.0,315.0){\usebox{\plotpoint}}
\put(292.0,316.0){\rule[-0.200pt]{0.723pt}{0.400pt}}
\put(295.0,316.0){\usebox{\plotpoint}}
\put(295.0,317.0){\rule[-0.200pt]{0.482pt}{0.400pt}}
\put(297.0,317.0){\usebox{\plotpoint}}
\put(297.0,318.0){\rule[-0.200pt]{0.723pt}{0.400pt}}
\put(300.0,318.0){\usebox{\plotpoint}}
\put(300.0,319.0){\rule[-0.200pt]{0.482pt}{0.400pt}}
\put(302.0,319.0){\usebox{\plotpoint}}
\put(302.0,320.0){\rule[-0.200pt]{0.723pt}{0.400pt}}
\put(305.0,320.0){\usebox{\plotpoint}}
\put(305.0,321.0){\rule[-0.200pt]{0.482pt}{0.400pt}}
\put(307.0,321.0){\usebox{\plotpoint}}
\put(307.0,322.0){\rule[-0.200pt]{0.723pt}{0.400pt}}
\put(310.0,322.0){\usebox{\plotpoint}}
\put(310.0,323.0){\rule[-0.200pt]{0.723pt}{0.400pt}}
\put(313.0,323.0){\usebox{\plotpoint}}
\put(313.0,324.0){\rule[-0.200pt]{0.482pt}{0.400pt}}
\put(315.0,324.0){\usebox{\plotpoint}}
\put(315.0,325.0){\rule[-0.200pt]{0.723pt}{0.400pt}}
\put(318.0,325.0){\usebox{\plotpoint}}
\put(318.0,326.0){\rule[-0.200pt]{0.723pt}{0.400pt}}
\put(321.0,326.0){\usebox{\plotpoint}}
\put(321.0,327.0){\rule[-0.200pt]{0.482pt}{0.400pt}}
\put(323.0,327.0){\usebox{\plotpoint}}
\put(323.0,328.0){\rule[-0.200pt]{0.723pt}{0.400pt}}
\put(326.0,328.0){\usebox{\plotpoint}}
\put(326.0,329.0){\rule[-0.200pt]{0.723pt}{0.400pt}}
\put(329.0,329.0){\usebox{\plotpoint}}
\put(329.0,330.0){\rule[-0.200pt]{0.723pt}{0.400pt}}
\put(332.0,330.0){\usebox{\plotpoint}}
\put(332.0,331.0){\rule[-0.200pt]{0.723pt}{0.400pt}}
\put(335.0,331.0){\usebox{\plotpoint}}
\put(335.0,332.0){\rule[-0.200pt]{0.482pt}{0.400pt}}
\put(337.0,332.0){\usebox{\plotpoint}}
\put(337.0,333.0){\rule[-0.200pt]{0.723pt}{0.400pt}}
\put(340.0,333.0){\usebox{\plotpoint}}
\put(340.0,334.0){\rule[-0.200pt]{0.723pt}{0.400pt}}
\put(343.0,334.0){\usebox{\plotpoint}}
\put(343.0,335.0){\rule[-0.200pt]{0.723pt}{0.400pt}}
\put(346.0,335.0){\usebox{\plotpoint}}
\put(346.0,336.0){\rule[-0.200pt]{0.964pt}{0.400pt}}
\put(350.0,336.0){\usebox{\plotpoint}}
\put(350.0,337.0){\rule[-0.200pt]{0.723pt}{0.400pt}}
\put(353.0,337.0){\usebox{\plotpoint}}
\put(353.0,338.0){\rule[-0.200pt]{0.723pt}{0.400pt}}
\put(356.0,338.0){\usebox{\plotpoint}}
\put(356.0,339.0){\rule[-0.200pt]{0.723pt}{0.400pt}}
\put(359.0,339.0){\usebox{\plotpoint}}
\put(359.0,340.0){\rule[-0.200pt]{0.964pt}{0.400pt}}
\put(363.0,340.0){\usebox{\plotpoint}}
\put(363.0,341.0){\rule[-0.200pt]{0.723pt}{0.400pt}}
\put(366.0,341.0){\usebox{\plotpoint}}
\put(366.0,342.0){\rule[-0.200pt]{0.964pt}{0.400pt}}
\put(370.0,342.0){\usebox{\plotpoint}}
\put(370.0,343.0){\rule[-0.200pt]{0.964pt}{0.400pt}}
\put(374.0,343.0){\usebox{\plotpoint}}
\put(374.0,344.0){\rule[-0.200pt]{0.964pt}{0.400pt}}
\put(378.0,344.0){\usebox{\plotpoint}}
\put(378.0,345.0){\rule[-0.200pt]{1.204pt}{0.400pt}}
\put(383.0,345.0){\usebox{\plotpoint}}
\put(383.0,346.0){\rule[-0.200pt]{1.204pt}{0.400pt}}
\put(388.0,346.0){\usebox{\plotpoint}}
\put(388.0,347.0){\rule[-0.200pt]{1.445pt}{0.400pt}}
\put(394.0,347.0){\usebox{\plotpoint}}
\put(394.0,348.0){\rule[-0.200pt]{1.445pt}{0.400pt}}
\put(400.0,348.0){\usebox{\plotpoint}}
\put(400.0,349.0){\rule[-0.200pt]{1.927pt}{0.400pt}}
\put(408.0,349.0){\usebox{\plotpoint}}
\put(408.0,350.0){\rule[-0.200pt]{2.409pt}{0.400pt}}
\put(418.0,350.0){\usebox{\plotpoint}}
\put(418.0,351.0){\rule[-0.200pt]{3.854pt}{0.400pt}}
\put(434.0,351.0){\usebox{\plotpoint}}
\put(434.0,352.0){\rule[-0.200pt]{97.564pt}{0.400pt}}
\put(679,418){\makebox(0,0)[r]{$10x^2$}}
\multiput(699,418)(20.756,0.000){5}{\usebox{\plotpoint}}
\put(799,418){\usebox{\plotpoint}}
\put(110,304){\usebox{\plotpoint}}
\put(110.00,304.00){\usebox{\plotpoint}}
\put(118.76,316.00){\usebox{\plotpoint}}
\put(128.00,327.51){\usebox{\plotpoint}}
\put(138.00,338.27){\usebox{\plotpoint}}
\put(149.00,349.78){\usebox{\plotpoint}}
\put(160.00,359.53){\usebox{\plotpoint}}
\put(171.29,369.00){\usebox{\plotpoint}}
\put(183.63,378.00){\usebox{\plotpoint}}
\put(196.00,386.39){\usebox{\plotpoint}}
\put(208.14,395.00){\usebox{\plotpoint}}
\put(221.90,402.00){\usebox{\plotpoint}}
\put(234.65,410.00){\usebox{\plotpoint}}
\put(248.41,417.00){\usebox{\plotpoint}}
\put(262.16,424.00){\usebox{\plotpoint}}
\put(276.92,430.00){\usebox{\plotpoint}}
\put(291.68,436.00){\usebox{\plotpoint}}
\put(306.43,442.00){\usebox{\plotpoint}}
\put(321.19,448.00){\usebox{\plotpoint}}
\put(336.94,453.00){\usebox{\plotpoint}}
\put(352.70,458.00){\usebox{\plotpoint}}
\put(368.45,463.00){\usebox{\plotpoint}}
\put(385.21,467.00){\usebox{\plotpoint}}
\put(402.96,470.00){\usebox{\plotpoint}}
\put(421.72,472.00){\usebox{\plotpoint}}
\put(441.48,473.00){\usebox{\plotpoint}}
\put(462.23,473.00){\usebox{\plotpoint}}
\put(482.99,473.00){\usebox{\plotpoint}}
\put(503.74,473.00){\usebox{\plotpoint}}
\put(524.50,473.00){\usebox{\plotpoint}}
\put(545.25,473.00){\usebox{\plotpoint}}
\put(566.01,473.00){\usebox{\plotpoint}}
\put(586.76,473.00){\usebox{\plotpoint}}
\put(607.52,473.00){\usebox{\plotpoint}}
\put(628.27,473.00){\usebox{\plotpoint}}
\put(649.03,473.00){\usebox{\plotpoint}}
\put(669.79,473.00){\usebox{\plotpoint}}
\put(690.54,473.00){\usebox{\plotpoint}}
\put(711.30,473.00){\usebox{\plotpoint}}
\put(732.05,473.00){\usebox{\plotpoint}}
\put(752.81,473.00){\usebox{\plotpoint}}
\put(773.56,473.00){\usebox{\plotpoint}}
\put(794.32,473.00){\usebox{\plotpoint}}
\put(815.07,473.00){\usebox{\plotpoint}}
\put(835.83,473.00){\usebox{\plotpoint}}
\put(839,473){\usebox{\plotpoint}}
\put(110.0,131.0){\rule[-0.200pt]{0.400pt}{88.651pt}}
\put(110.0,131.0){\rule[-0.200pt]{175.616pt}{0.400pt}}
\put(839.0,131.0){\rule[-0.200pt]{0.400pt}{88.651pt}}
\put(110.0,499.0){\rule[-0.200pt]{175.616pt}{0.400pt}}
\end{picture}
\end{center}
\caption{The logarithmic progress rate $\ln f_t$  of RSH-I for maximising $x^2$ and $10x^2$.}
\label{f1-f3}
\end{figure}
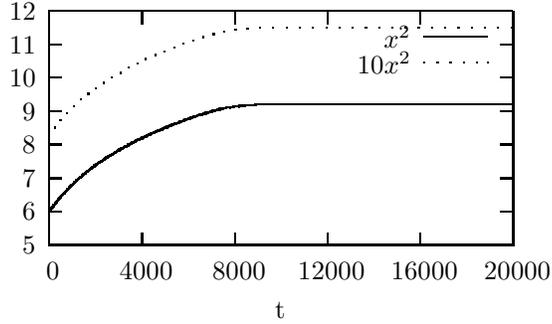
\end{example}

\section{Expected Hitting Times}

\label{secHittingTimes}

\subsection{Theoretical Study 1:: Fundamental Matrix}
In this subsection, we define the expected hitting time  and  fundamental matrix. The first hitting time is the number of iterations to find an optimal solution for the first time, which   is  an important measure of the performance of randomised search heuristics.  Its formal definition  is given as below. 

\begin{definition}
Suppose the initial population $\Phi_0 =X$. The mean   number   of iterations  when a randomised search heuristic encounters an optimal solution for the first time   is called the \emph{expected   hitting time}, denoted by $h(X)$.
\end{definition} 

When we talk about the expected hitting time,  we always assume that  randomised search heuristics  are convergent. Otherwise the expected hitting time is infinite, and that is out of our interest.
 
Let   $( X_1, X_2, \cdots   )$ represent all populations in the non-optimal set. Then the vector   
$$\mathbf{h}  = (h(X_1), h(X_2), \cdots   )^T,$$
represents   expected hitting times corresponding to all non-optimal populations. 

In absorbing Markov chains, the fundamental matrix plays a crucial role which is defined as follows.

\begin{definition}
\citep[Definition 11.3]{grinstead1997introduction} For an absorbing Markov chain $\{\Phi_t; t=0,1, \cdots \}$ with the transition matrix $\mathbf{Q}$, the matrix $\mathbf{N} = (\mathbf{I}-\mathbf{Q})^{-1}$ is
called the \emph{fundamental matrix}. 
\end{definition}
 
Now we explain the meaning of the entry $N(X,Y)$ of the fundamental matrix. Since the chain is convergent (i.e., $\rho(\mathbf{Q})<1$), then
\begin{align*}
\mathbf{N} = (\mathbf{I}-\mathbf{Q})^{-1}=\sum^{+\infty}_{t=0} \mathbf{Q}^t. 
\end{align*}
Rewriting the above equality in the entry form, we get
 \begin{align*}
N(X,Y) =  \sum^{+\infty}_{t=0} P(\Phi_t=Y \mid \Phi_0=X). 
\end{align*}
Therefore $N(X,Y)$  is the mean number of the chain visiting $Y$ when starting at $X$ \citep[Definition 11.3]{grinstead1997introduction}. 
  
The  hitting time vector $\mathbf{h}$ can be calculated by the fundamental matrix. According to Theorem 11.5 in
\citep{grinstead1997introduction}, for an absorbing Markov chain $\{ \Phi_t; t=0,1,  \cdots \}$,    its expected hitting times equal  to
\begin{align}
\mathbf{h}=\mathbf{N}\mathbf{1},
\end{align}
where $\mathbf{1}$ is a column vector all of whose entries are $1$.

However, it is difficult to apply the  above result to the analysis of expected hitting time since it is impossible to calculate the fundamental matrix in most cases.

\subsection{Theoretical Study 2: Average Drift Analysis}
In this subsection, we present average  drift analysis for bounding the expected hitting time, which is seldom investigated before. The first work to use less point-wise drift was \cite{jagerskupper2011combining}. Recently average drift analysis was applied to the runtime analysis of an EA for  unimodal functions.\footnote{Jun He, Tianshi Chen, Xin Yao: Average Drift Analysis and its Application. CoRR abs/1308.3080 (2013)}

It is too difficult to  calculate the expected hitting time   
through the fundamental matrix. Instead it is   more realistic to obtain their lower and upper bounds on  expected hitting time. Drift analysis was  introduced in bounding the expected hitting time of randomised search heuristics~\cite{he1998study,he2001drift}.
 In drift analysis,   $d(X)$ is called a \emph{drift function} if  $d(X) \ge 0$ for any non-optimal state $X$ and $d(X)=0$ for any  optimal state $X$. Given a drift function $d(X)$,   drift  
  represents the progress rate  of  moving towards the optima per iteration. 
\begin{definition}  
  \emph{Drift}    at   point $X$  is  defined by
$$
\Delta (X):=d(X)-\sum_{Y \in S_{\non}}  d(Y)  P(X,Y).
$$
\end{definition}

Let  $(X_1, X_2, \cdots)$   represent  all populations in the non-optimal set and  the vector 
$$\mathbf{d}=(d(X_1), d(X_2), \cdots)^T$$ represents  the drift function values corresponding to each non-optimal state. The
vector 
$$\boldsymbol{\Delta}=(\Delta(X_1), \Delta(X_2), \cdots)^T$$ represents  the drift  value corresponding to each non-optimal state.

The   drift $\Delta (X)$ is  determined by a single state $X$. So  it is called \emph{point-wise drift}.
Now we introduce   average drift which is the average of $\Delta (X)$ over the probability distribution of $\Phi_t=X$.

\begin{definition}
 \emph{Average   drift}   at the $t$-iteration is  
\begin{align}
\bar{\Delta}_t:=
   \sum_{X \in S_{\non}}\Delta(X)   \frac{P(\Phi_t =X)}{ P(\Phi_t \in S_{\non})}.
\end{align}
\end{definition}

 Let  $h(\Phi_0)$  denote the   expected hitting time  when  the initial population is $\Phi_0$, that is, 
$$h(\Phi_0)=\sum_{X \in S_{\non}}  h(X) P(\Phi_0=X).$$

Let $d(\Phi_0)$  denote the   expected drift function  when  the initial population is $\Phi_0$, that is,
$$d(\Phi_0)=\sum_{X \in S_{\non}}  d(X) P(\Phi_0=X).$$

The following average drift theorem is for upper-bounding  the expected hitting time.
\begin{theorem} 
\label{theDriftUpperBound}
Suppose a randomised search heuristic is convergent. If for any  $t   \ge 0$, the average    drift $
\bar{\Delta}_t \ge 1,
$ 
then the    expected hitting time $h(\Phi_0)$ is upper-bounded by
$d(\Phi_0).$
 \end{theorem}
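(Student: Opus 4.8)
The plan is to track the expectation of the drift function along the run, $D_t := \sum_{X\in S_{\non}} d(X)\,P(\Phi_t=X) = \mathbf{q}_t^T\mathbf{d}$, and to show that the hypothesis $\bar{\Delta}_t\ge 1$ forces $D_t$ to drop by at least $P(\Phi_t\in S_{\non})$ at each step. First I would put the point-wise drift in matrix form: since $d$ vanishes on $S_{\opt}$, summing over $S_{\non}$ is the same as summing over $S$, so $\boldsymbol{\Delta}=(\mathbf{I}-\mathbf{Q})\mathbf{d}$. Combining this with the matrix iteration $\mathbf{q}_{t+1}^T=\mathbf{q}_t^T\mathbf{Q}$ of (\ref{equMatrixIteration}) gives
$$
\sum_{X\in S_{\non}}\Delta(X)\,P(\Phi_t=X)=\mathbf{q}_t^T\boldsymbol{\Delta}=\mathbf{q}_t^T\mathbf{d}-\mathbf{q}_t^T\mathbf{Q}\mathbf{d}=D_t-D_{t+1}.
$$
Dividing by $P(\Phi_t\in S_{\non})=\parallel\mathbf{q}_t\parallel_1$ recovers exactly the definition of $\bar{\Delta}_t$, so the assumption $\bar{\Delta}_t\ge 1$ is equivalent to $D_t-D_{t+1}\ge P(\Phi_t\in S_{\non})$ at every $t$ with $P(\Phi_t\in S_{\non})>0$; the inequality holds trivially once this probability is $0$, because then $D_{t+1}=D_t=0$.

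Next I would telescope. Summing $D_t-D_{t+1}\ge P(\Phi_t\in S_{\non})$ over $t=0,\dots,T-1$ yields $D_0-D_T\ge\sum_{t=0}^{T-1}P(\Phi_t\in S_{\non})$, and since $d\ge 0$ we have $D_T\ge 0$, hence $D_0\ge\sum_{t=0}^{T-1}P(\Phi_t\in S_{\non})$. Letting $T\to+\infty$ is legitimate because the search space is finite, so $\mathbf{d}$ is bounded and $D_0<+\infty$, while the partial sums are nondecreasing; this gives $d(\Phi_0)=D_0\ge\sum_{t=0}^{+\infty}P(\Phi_t\in S_{\non})$.

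Finally I would identify the right-hand side with $h(\Phi_0)$. Because the optimal set is absorbing in $\{\Phi_t\}$, the first hitting time equals the total number of iterations the chain spends in $S_{\non}$, so $h(\Phi_0)=\sum_{t=0}^{+\infty}P(\Phi_t\in S_{\non})$; equivalently, this follows from $\mathbf{h}=\mathbf{N}\mathbf{1}=\sum_{t\ge 0}\mathbf{Q}^t\mathbf{1}$ together with $\mathbf{q}_0^T\mathbf{Q}^t\mathbf{1}=\parallel\mathbf{q}_t\parallel_1$. Combining with the previous paragraph gives $h(\Phi_0)\le d(\Phi_0)$, as claimed.

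I expect no deep obstacle here; the work is bookkeeping rather than difficulty. The two points needing care are: handling the iterations where $P(\Phi_t\in S_{\non})=0$ so that $\bar{\Delta}_t$ is well defined, and justifying the passage to the limit in the telescoping sum (monotone, nonnegative terms, $D_0$ finite). Convergence of the heuristic is used only to guarantee $h(\Phi_0)<+\infty$, so that the bound is not vacuous, and that $\sum_t\mathbf{Q}^t$ converges. The conceptual point worth highlighting is that averaging $\Delta(X)$ against the conditional distribution $P(\Phi_t=X\mid\Phi_t\in S_{\non})$ is precisely what makes the telescoping identity exact, in contrast to classical point-wise drift, which requires $\Delta(X)\ge 1$ at every non-optimal state.
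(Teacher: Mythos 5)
Your argument is correct and is essentially the paper's own proof: you write the average-drift hypothesis as $\mathbf{q}_t^T(\mathbf{I}-\mathbf{Q})\mathbf{d}\ge\parallel\mathbf{q}_t\parallel_1$, telescope over $t$, pass to the limit using convergence, and identify the resulting series with $\mathbf{q}_0^T\mathbf{N}\mathbf{1}=h(\Phi_0)$. The only differences are cosmetic bookkeeping (tracking the scalar $D_t=\mathbf{q}_t^T\mathbf{d}$ and dropping the tail via $D_T\ge 0$ instead of $\mathbf{Q}^{k+1}\to\mathbf{O}$), plus a welcome explicit treatment of the degenerate case $\parallel\mathbf{q}_t\parallel_1=0$ that the paper leaves implicit.
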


\begin{proof}
Recall that the  $1$-norm       equals to
$$
\parallel \mathbf{q}_t \parallel_1= P(\Phi_t  \in S_{\non} ),
$$
then  the average    drift can be rewritten in an equivalent vector form:
 \begin{align}
\bar{\Delta}_t&=\frac{\mathbf{q}^T_t}{\parallel\mathbf{q}^T_t \parallel_1} (\mathbf{I}-\mathbf{Q})   \mathbf{d}.
\end{align}

The condition that  $
\bar{\Delta}_t \ge 1 
$  can be rewritten in an equivalent vector  form, 
 \begin{align*}
 \frac{\mathbf{q}^T_t}{\parallel\mathbf{q}^T_t \parallel_1} (\mathbf{I}-\mathbf{Q})   \mathbf{d} \ge 1.
\end{align*}

It follows 
 $$
\mathbf{q}^T_t (\mathbf{I}-\mathbf{Q})   \mathbf{d} \ge \parallel\mathbf{q}^T_t \parallel_1,
$$
then from   $\parallel\mathbf{q}^T_t \parallel_1=\mathbf{q}^T_t \mathbf{ 1}$, it follows
\begin{align}
\mathbf{q}^T_t (\mathbf{I}-\mathbf{Q})  \mathbf{d} \ge  \mathbf{q}^T_t \mathbf{ 1}.
\label{equForwardDrift3}
\end{align}

From the matrix iteration
$ 
\mathbf{q}^T_t = \mathbf{q}^T_0 \mathbf{Q}^t,
$ 
 it follows
$$
\mathbf{q}^T_0 \mathbf{Q}^t (\mathbf{I}-\mathbf{Q})   \mathbf{d} \ge  \mathbf{q}^T_0  \mathbf{Q}^t \mathbf{ 1},
$$

Equivalently
$$
\mathbf{q}^T_0 \left( \mathbf{Q}^t  -\mathbf{Q}^{t+1} \right) \mathbf{d} \ge  \mathbf{q}^T_0  \mathbf{Q}^t \mathbf{ 1},
$$

Now summing $t$ from $0$ to $k$, we get
$$
\sum^k_{t=0} \mathbf{q}^T_0 \left( \mathbf{Q}^t  -\mathbf{Q}^{t+1} \right) \mathbf{d} \ge \sum^k_{t=0} \mathbf{q}^T_0  \mathbf{Q}^t \mathbf{ 1},
$$
and simplifying both sides, it follows
\begin{align}
\label{equSimple}
\mathbf{q}^T_0    \mathbf{d}  -  \mathbf{q}^T_0 \mathbf{Q}^{k+1}   \mathbf{d} \ge  \mathbf{q}^T_0  \left ( \sum^k_{t=0} \mathbf{Q}^t \right) \mathbf{ 1}.
\end{align}

Due to $\rho(\mathbf{Q})<1$, the following two limits exist, 
\begin{align*}
&\lim_{k \to +\infty}\mathbf{Q}^{k+1}=\mathbf{O},\\
&\lim_{k \to +\infty} \sum^k_{t=0}   \mathbf{Q}^t =(\mathbf{I}-\mathbf{Q})^{-1}.
\end{align*}

Thus when $k \to +\infty$, (\ref{equSimple}) becomes
$$
  \mathbf{q}^T_0   \mathbf{d}     \ge  \mathbf{q}^T_0  (\mathbf{I}-\mathbf{Q})^{-1} \mathbf{ 1}.
$$

Recalling that
$ 
\mathbf{h} = \mathbf{N} \mathbf{1}=(\mathbf{I}-\mathbf{Q})^{-1} \mathbf{ 1},
$ 
we have
$$
  \mathbf{q}^T_0   \mathbf{d}     \ge  \mathbf{q}^T_0 \mathbf{h}.
$$ 
which proves the conclusion.
\end{proof}

Similarly we can establish an average drift theorem for  lower-bounding the expected hitting time. Its proof is the almost the same as that for the above theorem. We omit the   proof of theorem.
\begin{theorem} \label{theDriftLowerBound} 
If for any iteration $t   \ge 0$, the average  drift  $
\bar{\Delta}_t \le 1,
$ 
then the expected hitting time $h(\Phi_0)$ is lower-bounded by $d(\Phi_0)$.
\end{theorem}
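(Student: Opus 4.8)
The plan is to mirror the argument of Theorem~\ref{theDriftUpperBound} step for step, reversing every inequality. First I would rewrite the average drift in vector form,
\begin{align*}
\bar{\Delta}_t=\frac{\mathbf{q}^T_t}{\parallel\mathbf{q}^T_t\parallel_1}(\mathbf{I}-\mathbf{Q})\mathbf{d},
\end{align*}
exactly as in that proof, so that the hypothesis $\bar{\Delta}_t\le 1$ becomes $\mathbf{q}^T_t(\mathbf{I}-\mathbf{Q})\mathbf{d}\le\parallel\mathbf{q}^T_t\parallel_1$ after multiplying through by the positive scalar $\parallel\mathbf{q}^T_t\parallel_1$. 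Using $\parallel\mathbf{q}^T_t\parallel_1=\mathbf{q}^T_t\mathbf{1}$ and the matrix iteration $\mathbf{q}^T_t=\mathbf{q}^T_0\mathbf{Q}^t$, this rearranges to $\mathbf{q}^T_0(\mathbf{Q}^t-\mathbf{Q}^{t+1})\mathbf{d}\le\mathbf{q}^T_0\mathbf{Q}^t\mathbf{1}$ for every $t\ge 0$.

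Next I would sum this inequality over $t$ from $0$ to $k$; the left-hand side telescopes, giving
\begin{align*}
\mathbf{q}^T_0\mathbf{d}-\mathbf{q}^T_0\mathbf{Q}^{k+1}\mathbf{d}\le\mathbf{q}^T_0\left(\sum_{t=0}^{k}\mathbf{Q}^t\right)\mathbf{1}.
\end{align*}
Since the heuristic is convergent, $\rho(\mathbf{Q})<1$, so $\mathbf{Q}^{k+1}\to\mathbf{O}$ and $\sum_{t=0}^{k}\mathbf{Q}^t\to(\mathbf{I}-\mathbf{Q})^{-1}$ as $k\to+\infty$. Letting $k\to+\infty$ and recalling $\mathbf{h}=\mathbf{N}\mathbf{1}=(\mathbf{I}-\mathbf{Q})^{-1}\mathbf{1}$ then yields $\mathbf{q}^T_0\mathbf{d}\le\mathbf{q}^T_0\mathbf{h}$, which is precisely $d(\Phi_0)\le h(\Phi_0)$, the claimed lower bound.

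As for the main obstacle, there is none of substance: every manipulation above — multiplying by the positive scalar $\parallel\mathbf{q}^T_t\parallel_1$, adding the inequalities, and passing to the limit — preserves the reversed inequality just as well as it preserved the original one in Theorem~\ref{theDriftUpperBound}. The only points that deserve a moment's care are the same two as in the upper-bound case: that $\bar{\Delta}_t$ be well defined for every $t$, i.e. $P(\Phi_t\in S_{\non})>0$, which we assume throughout (and which holds automatically when $P(\Phi_0\in S_{\non})>0$ and the transition matrix has positive diagonal, as for the elitist heuristics considered here); and that the two limits displayed above exist, which is guaranteed by $\rho(\mathbf{Q})<1$.
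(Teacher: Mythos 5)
Your proposal is correct and follows exactly the route the paper intends: the paper omits this proof, stating it is ``almost the same'' as that of Theorem~\ref{theDriftUpperBound}, and your argument is precisely that proof with every inequality reversed (the reversal survives multiplication by the positive scalar $\parallel\mathbf{q}^T_t\parallel_1$, summation, and the limit $k\to+\infty$ under $\rho(\mathbf{Q})<1$). Your side remarks on well-definedness of $\bar{\Delta}_t$ and the standing convergence assumption are consistent with the paper's framework, so there is nothing to add.
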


  Previous point-wise drift   theorems~\citep[Theorems 2 and 3]{he2003towards}   are direct corollaries of current average drift theorems.

\begin{corollary} 
If for any non-optimal population $X$, its    drift  $  \Delta (X) \ge 1$, then the expected hitting time   $ h(X) \le d(X)$.
\end{corollary}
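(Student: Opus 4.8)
The plan is to obtain this corollary as an immediate consequence of Theorem~\ref{theDriftUpperBound}. The key observation is that the average drift $\bar{\Delta}_t$ is just a weighted average of the point-wise drifts $\Delta(X)$: setting $w_t(X) := P(\Phi_t=X)/P(\Phi_t \in S_{\non})$, these weights are non-negative and, summing over all non-optimal states, satisfy $\sum_{X \in S_{\non}} w_t(X) = 1$ (this uses, as throughout the section, that the heuristic is convergent, so $P(\Phi_t \in S_{\non})$ is positive whenever the average drift is defined).

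First I would fix an arbitrary iteration $t \ge 0$ and write $\bar{\Delta}_t = \sum_{X \in S_{\non}} \Delta(X)\, w_t(X)$ directly from the definition of average drift. If $\Delta(X) \ge 1$ for every non-optimal population $X$, then $\bar{\Delta}_t \ge \sum_{X \in S_{\non}} w_t(X) = 1$. Since $t$ was arbitrary, the hypothesis of Theorem~\ref{theDriftUpperBound}, namely $\bar{\Delta}_t \ge 1$ for all $t \ge 0$, is met.

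Second I would invoke Theorem~\ref{theDriftUpperBound} with the deterministic initial population $\Phi_0 = X$, i.e.\ $P(\Phi_0 = X) = 1$ and $P(\Phi_0 = Y) = 0$ for $Y \neq X$. For this choice $h(\Phi_0) = \sum_{Z \in S_{\non}} h(Z) P(\Phi_0 = Z) = h(X)$ and likewise $d(\Phi_0) = d(X)$, so the theorem's conclusion $h(\Phi_0) \le d(\Phi_0)$ reads exactly $h(X) \le d(X)$, as claimed.

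I do not expect a genuine obstacle here; the argument is the elementary ``a convex combination of numbers each at least $1$ is at least $1$'' step, followed by specialising the initial distribution to a point mass. The only point deserving a word of care is the degenerate case $P(\Phi_t \in S_{\non}) = 0$ for some $t$ (the chain already absorbed), in which the average-drift condition is vacuous for that $t$ while $h(X) = 0 \le d(X)$ holds trivially because $d$ is non-negative; so the reduction to Theorem~\ref{theDriftUpperBound} goes through without extra hypotheses.
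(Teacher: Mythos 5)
Your argument is correct and is essentially the paper's intended one: the paper gives no separate proof of this corollary, merely asserting that the point-wise drift theorems are direct consequences of the average drift theorems, and your reduction (point-wise drift $\ge 1$ implies the convex combination $\bar{\Delta}_t \ge 1$, then apply Theorem~\ref{theDriftUpperBound} with a point-mass initial distribution at $X$) is exactly that route. One small quibble with your side remark: if $P(\Phi_t \in S_{\non})=0$ at some later iteration $t$, then $h(X)$ need not be $0$; the cleaner way to dispose of this degenerate case is to note that the inequality $\mathbf{q}^T_t(\mathbf{I}-\mathbf{Q})\mathbf{d} \ge \mathbf{q}^T_t \mathbf{1}$ used in the proof of Theorem~\ref{theDriftUpperBound} holds trivially when $\mathbf{q}_t=\mathbf{0}$, so the argument goes through unchanged.
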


\begin{corollary}
 If for any non-optimal population $X$, its     drift  $  \Delta (X) \le 1$, then the expected hitting time  $ h(X) \ge d(X)$.
\end{corollary}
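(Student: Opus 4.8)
The plan is to derive this corollary directly from Theorem~\ref{theDriftLowerBound}, exactly as the analogous corollary for the upper bound follows from Theorem~\ref{theDriftUpperBound}. The key observation is that the average drift $\bar{\Delta}_t$ is, by its very definition, a weighted average of the point-wise drifts $\Delta(X)$ over non-optimal states $X$, with weights
\[
w_t(X) := \frac{P(\Phi_t = X)}{P(\Phi_t \in S_{\non})}, \qquad X \in S_{\non}.
\]
First I would check that these weights form a probability distribution on $S_{\non}$: each $w_t(X) \ge 0$, and $\sum_{X \in S_{\non}} w_t(X) = P(\Phi_t \in S_{\non})/P(\Phi_t \in S_{\non}) = 1$. (Here convergence of the heuristic, together with the assumption that $\Phi_0$ is non-optimal with positive probability, guarantees $P(\Phi_t \in S_{\non}) > 0$ whenever $\bar{\Delta}_t$ is defined, so the ratio makes sense.)

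Next, using the hypothesis $\Delta(X) \le 1$ for every non-optimal $X$, I would bound
\[
\bar{\Delta}_t = \sum_{X \in S_{\non}} \Delta(X)\, w_t(X) \le \sum_{X \in S_{\non}} 1 \cdot w_t(X) = 1,
\]
valid for every $t \ge 0$. Thus the hypothesis of Theorem~\ref{theDriftLowerBound} is satisfied.

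Finally, I would apply Theorem~\ref{theDriftLowerBound} to the special case in which the initial population is the deterministic state $X$, i.e. $P(\Phi_0 = X) = 1$. In that case $d(\Phi_0) = \sum_{Y \in S_{\non}} d(Y) P(\Phi_0 = Y) = d(X)$ and likewise $h(\Phi_0) = h(X)$, so the conclusion $h(\Phi_0) \ge d(\Phi_0)$ becomes precisely $h(X) \ge d(X)$, as required. There is no real obstacle here; the only point demanding a little care is confirming that the average-drift weights are a genuine probability distribution (so that the convex-combination bound is legitimate) and that specialising to a point-mass initial population is permissible, both of which are immediate from the definitions and the standing convergence assumption.
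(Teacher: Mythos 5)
Your argument is correct and is precisely the derivation the paper intends: it states these point-wise results as direct corollaries of the average drift theorems, and your proof fills in exactly that route --- since $\bar{\Delta}_t$ is a convex combination of the point-wise drifts, $\Delta(X)\le 1$ for all non-optimal $X$ forces $\bar{\Delta}_t\le 1$, and specialising Theorem~\ref{theDriftLowerBound} to the point-mass initial population $P(\Phi_0=X)=1$ gives $h(X)\ge d(X)$. No gaps; your side remarks on the weights summing to one and on $P(\Phi_t\in S_{\non})>0$ are the only technical points and you handle them adequately.
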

 
In point-wise drift theorems, its requirement is that the  drift is not less than 1 (or not more than 1) for all non-optimal states.  In average drift  theorems,  the condition is  replaced by that  average drift is not less than 1 (or not more than 1). Hence average drift analysis is more powerful than point-wise drift analysis.

\subsection{Theoretical Study 3: Backward Drift Analysis}
In this subsection, we present  novel backward drift analysis, which was never discussed before. We call the drift defined in the previous subsection  forward drift  in order to distinguish it from the backward drift introduced in the current subsection.
Forward and backward drift analysis can be regarded as a dual pair.

Starting from the fundamental matrix, we already know that the vector
$
\mathbf{h}  =\mathbf{N} \mathbf{1} 
$ represents  all expected hitting times in the non-optimal set.  
Similarly the vector
$$\mathbf{s}^T:=\mathbf{1}^T \mathbf{N}$$ gives another type of important times   for  randomised search heuristics. Now we explain the intuitive meaning of the vector $\mathbf{s}$. Notice that  the entry
\begin{align*}
s(Y)=\sum_{X \in S_{\non}} N(X,Y).
\end{align*}
and recall that $N(X,Y)$ is  the expected number of  that the Markov chain visits $Y$ when starting at  $X$, then $s(Y)$ is the sum of the expected number of visiting state $Y$ when starting from all non-optimal states.  We call $s(Y)$ the \emph{expected staying time} in non-optimal state $Y$.

The expected hitting time and expected staying time  have the following relationship. 

 \begin{theorem} 
 \label{theForwardBackward}
 Let $h(X)$ be the expected hitting time from non-optimal population  $X$ and $s(Y)$ the expected staying time in non-optimal population $Y$. Then 
\begin{align*}
 \sum_{X \in S_{\non}} h(X) = \sum_{Y \in S_{\non}} s(Y)
\end{align*}
\end{theorem}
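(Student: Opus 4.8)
The plan is to reduce the identity to the elementary observation that the triple product $\mathbf{1}^{T}\mathbf{N}\mathbf{1}$ can be bracketed in two ways. Recall from the preceding subsections that, since the heuristic is convergent, $\rho(\mathbf{Q})<1$, so the fundamental matrix $\mathbf{N}=(\mathbf{I}-\mathbf{Q})^{-1}=\sum_{t\ge 0}\mathbf{Q}^{t}$ is well defined and has finite nonnegative entries; moreover the search space $D$, and hence $S_{\non}$, is finite, so $\mathbf{N}$, $\mathbf{h}$ and $\mathbf{s}$ are an honest finite matrix and finite vectors, and all the sums below are finite.

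First I would rewrite each side as a bilinear form in $\mathbf{N}$. On the left, using $\mathbf{h}=\mathbf{N}\mathbf{1}$,
\begin{align*}
\sum_{X\in S_{\non}} h(X)=\mathbf{1}^{T}\mathbf{h}=\mathbf{1}^{T}\mathbf{N}\mathbf{1}.
\end{align*}
On the right, using $\mathbf{s}^{T}=\mathbf{1}^{T}\mathbf{N}$,
\begin{align*}
\sum_{Y\in S_{\non}} s(Y)=\mathbf{s}^{T}\mathbf{1}=\mathbf{1}^{T}\mathbf{N}\mathbf{1}.
\end{align*}
Associativity of matrix multiplication gives $(\mathbf{1}^{T}\mathbf{N})\mathbf{1}=\mathbf{1}^{T}(\mathbf{N}\mathbf{1})$, which is precisely the claimed equality.

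As a cross-check I would also verify the statement entry-wise: both sides equal $\sum_{X\in S_{\non}}\sum_{Y\in S_{\non}} N(X,Y)=\sum_{X,Y\in S_{\non}}\sum_{t=0}^{+\infty} P(\Phi_{t}=Y\mid\Phi_{0}=X)$, i.e. the total expected number of visits to non-optimal states accumulated over all non-optimal starting states. Summing this double series by first fixing the start $X$ and then the visited state $Y$ recovers $\sum_{X} h(X)$, while summing by first fixing $Y$ and then $X$ recovers $\sum_{Y} s(Y)$.

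There is essentially no obstacle here beyond bookkeeping: the only point that needs a word of care is that $\mathbf{N}$, $\mathbf{h}$ and $\mathbf{s}$ are finite, which is guaranteed by the convergence hypothesis $\rho(\mathbf{Q})<1$ together with the finiteness of the state space; once that is in place the interchange of summation in the entry-wise argument (equivalently, the associativity used above) is immediate.
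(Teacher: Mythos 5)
Your proof is correct and takes essentially the same route as the paper: both express $\sum_X h(X)=\mathbf{1}^T\mathbf{N}\mathbf{1}$ via $\mathbf{h}=\mathbf{N}\mathbf{1}$ and $\sum_Y s(Y)=\mathbf{1}^T\mathbf{N}\mathbf{1}$ via $\mathbf{s}^T=\mathbf{1}^T\mathbf{N}$, so the identity follows by associativity. Your added remarks on finiteness and the entry-wise interpretation are harmless extras beyond the paper's two-line argument.
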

 \begin{proof}
From the equalities
\begin{align*}
  \sum_{X \in S_{\non}} h(X)=  \mathbf{1}^T \mathbf{N}\mathbf{1}, \\
   \sum_{X \in S_{\non}} s(X)=   \mathbf{1}^T \mathbf{N}\mathbf{1},
\end{align*}
we draw the conclusion.
\end{proof} 

The above theorem  implies that the expected hitting time equals to the expected staying time when the initial population is chosen at uniformly random.  

Next we establish   backward drift analysis  for bounding the expected staying time.
Like forward drift analysis, a drift function is used in backward drift analysis. $d(Y)$ is  called a \emph{drift function} if $d(Y) \ge 0$ for any non-optimal state $Y$ and $d(Y)=0$ for any  optimal state $Y$.
 
\begin{definition}
Let $d(Y)$ be a drift function. For a  non-optimal population $Y$, the  backward forward drift   is  
$$
\nabla (Y):= d(Y)-\sum_{X \in S_{\non}}   d(X) P(X,Y).
$$
\end{definition} 

Backward drift is to measure the move from $Y$ to $X$ (backward). This is different from
forward drift
$$
\Delta (X):=d(X)-\sum_{Y \in S_{\non}}  d(Y)  P(X,Y).
$$
which  is to measure the move from $X$ to $Y$ (forward).

  The following backward drift theorem is used for upper-bounding the staying time.

\begin{theorem}
\label{theBackwardUpperBound}
 If for any non-optimal population $Y$, its    backward drift  $  \nabla (Y) \ge 1$, then the staying time  $ s(Y) \le d(Y)$.
\end{theorem}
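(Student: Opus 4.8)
The plan is to mimic the telescoping argument in the proof of Theorem~\ref{theDriftUpperBound}, but with everything transposed: where the forward proof propagated an inequality by multiplying on the left by the nonnegative probability vector $\mathbf{q}_0$, the backward proof will propagate it by multiplying by the nonnegative matrix powers $(\mathbf{Q}^T)^t$. First I would rewrite the hypothesis $\nabla(Y)\ge 1$ in vector form. Writing $\mathbf{d}=(d(X_1),d(X_2),\cdots)^T$ and observing that $\sum_{X\in S_{\non}} d(X)P(X,Y)$ is exactly the $Y$-th entry of $\mathbf{Q}^T\mathbf{d}$, the condition ``$\nabla(Y)\ge 1$ for every non-optimal $Y$'' becomes the entrywise vector inequality
\begin{align*}
(\mathbf{I}-\mathbf{Q}^T)\,\mathbf{d}\ \ge\ \mathbf{1}.
\end{align*}

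Next I would iterate this inequality. Since every entry of $\mathbf{Q}$ is a transition probability, $\mathbf{Q}\ge\mathbf{O}$ entrywise, hence $(\mathbf{Q}^T)^t\ge\mathbf{O}$ for every $t\ge 0$, and multiplying an entrywise vector inequality by a nonnegative matrix preserves it. Applying $(\mathbf{Q}^T)^t$ to both sides gives
\begin{align*}
(\mathbf{Q}^T)^t\mathbf{d}-(\mathbf{Q}^T)^{t+1}\mathbf{d}\ \ge\ (\mathbf{Q}^T)^t\mathbf{1},
\end{align*}
and summing over $t=0,1,\cdots,k$ the left-hand side telescopes, so that
\begin{align*}
\mathbf{d}-(\mathbf{Q}^T)^{k+1}\mathbf{d}\ \ge\ \left(\sum_{t=0}^{k}(\mathbf{Q}^T)^t\right)\mathbf{1}.
\end{align*}

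Finally I would let $k\to+\infty$. Because the chain is convergent, $\rho(\mathbf{Q})=\rho(\mathbf{Q}^T)<1$, so $(\mathbf{Q}^T)^{k+1}\to\mathbf{O}$ and $\sum_{t=0}^{k}(\mathbf{Q}^T)^t\to(\mathbf{I}-\mathbf{Q}^T)^{-1}=\mathbf{N}^T$, exactly as in the proof of Theorem~\ref{theDriftUpperBound}. Hence $\mathbf{d}\ge\mathbf{N}^T\mathbf{1}=(\mathbf{1}^T\mathbf{N})^T=\mathbf{s}$, which is precisely $d(Y)\ge s(Y)$ for every non-optimal population $Y$.

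I do not expect a serious obstacle here: the whole argument is the formal dual of the forward drift proof. The one point deserving care — the analogue of invoking nonnegativity of $\mathbf{q}_0$ in the forward case — is the justification that multiplying the drift inequality by $(\mathbf{Q}^T)^t$ keeps the inequality direction, which rests only on $\mathbf{Q}$ being a substochastic, hence nonnegative, matrix. The interchange of limit and summation in the last step uses nothing beyond $\rho(\mathbf{Q})<1$, the standing assumption whenever expected (staying or hitting) times are discussed.
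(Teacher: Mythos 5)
Your proof is correct. It differs from the paper's own argument mainly in how the inverse enters: the paper keeps everything in row-vector form, writes the hypothesis as $\mathbf{d}^T(\mathbf{I}-\mathbf{Q})-\mathbf{1}^T\ge\mathbf{0}^T$, and multiplies once on the right by the fundamental matrix $\mathbf{N}=(\mathbf{I}-\mathbf{Q})^{-1}$, invoking its nonnegativity to get $\mathbf{d}^T\ge\mathbf{1}^T\mathbf{N}=\mathbf{s}^T$ in a single step. You instead transpose the inequality to $(\mathbf{I}-\mathbf{Q}^T)\mathbf{d}\ge\mathbf{1}$, multiply by $(\mathbf{Q}^T)^t$, telescope, and pass to the limit using $\rho(\mathbf{Q})<1$; this is exactly the machinery of the forward Theorem~\ref{theDriftUpperBound}, and it effectively re-derives the two facts the paper quotes about $\mathbf{N}$ (its nonnegativity and the Neumann-series identity $\mathbf{N}=\sum_{t\ge0}\mathbf{Q}^t$). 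So the two proofs rest on the same underlying facts; the paper's is shorter because it uses $\mathbf{N}\ge\mathbf{O}$ as a known property, while yours is longer but self-contained and makes the duality with the forward average-drift proof explicit. Your handling of the standing convergence assumption ($\rho(\mathbf{Q})<1$, needed for $s(Y)$ to be defined at all) and of the identity $\mathbf{N}^T\mathbf{1}=(\mathbf{1}^T\mathbf{N})^T=\mathbf{s}$ is also fine.
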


\begin{proof}
Let  $(Y_1, Y_2, \cdots)$  represents   all non-optimal populations and  the vector 
$${\boldsymbol\nabla}^T=(\nabla (Y_1), \nabla (Y_2), \cdots)$$ 
represents their drift function values respectively.   Then from the definition,  $\boldsymbol{\nabla}^T$ can be rewritten in the vector form as follows:
 \begin{align}
    {\boldsymbol\nabla}^T:=\mathbf{d}^T (\mathbf{I}-\mathbf{Q}).
\end{align}

The condition that    $  \nabla (X) \ge 1$ is rewritten in a  vector form $
  \mathbf{d}^T (\mathbf{I}-\mathbf{Q}) \ge  \mathbf{1}^T  
$, and it follows
$
  \mathbf{d}^T (\mathbf{I}-\mathbf{Q})  - \mathbf{1}^T \ge \mathbf{0} ^T.
$

Since the fundamental matrix  $\mathbf{N} $ is non-negative, then
$$
  \left(\mathbf{d}^T (\mathbf{I}-\mathbf{Q})  - \mathbf{1}^T\right) \mathbf{N} \ge \mathbf{0}^T,
$$

Since $\mathbf{N} = (\mathbf{I}-\mathbf{Q})^{-1}$, so it follows
$
  \mathbf{d}^T   - \mathbf{1}^T  \mathbf{N} \ge \mathbf{0}^T,
$
and $\mathbf{s}^T \le \mathbf{d}^T$ which proves the conclusion.
\end{proof}

Similarly we can establish a backward drift theorem for lower-bounding the staying time.
\begin{theorem}
\label{theBackwardLowerBound}
 If for any non-optimal population $X$, its    backward drift  $ \nabla (X) \le 1$, then the staying time satisfies $ s(X) \ge d(X)$.
\end{theorem}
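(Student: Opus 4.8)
The plan is to mirror the proof of Theorem~\ref{theBackwardUpperBound}, reversing every inequality, since the two statements form a dual pair in exactly the way Theorems~\ref{theDriftUpperBound} and~\ref{theDriftLowerBound} do. As in that proof, I would first enumerate the non-optimal populations as $(Y_1, Y_2, \dots)$, collect the backward drift values into the row vector $\boldsymbol{\nabla}^T = (\nabla(Y_1), \nabla(Y_2), \dots)$, and rewrite the definition of backward drift in matrix form as $\boldsymbol{\nabla}^T = \mathbf{d}^T(\mathbf{I}-\mathbf{Q})$. I would also recall, as is standing throughout this section, that the randomised search heuristic is assumed convergent, so $\rho(\mathbf{Q})<1$ and the fundamental matrix $\mathbf{N}=(\mathbf{I}-\mathbf{Q})^{-1}=\sum_{t=0}^{+\infty}\mathbf{Q}^t$ exists and is finite.

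Next I would translate the hypothesis $\nabla(X)\le 1$ for every non-optimal $X$ into the vector inequality $\mathbf{d}^T(\mathbf{I}-\mathbf{Q})\le \mathbf{1}^T$, equivalently $\mathbf{1}^T - \mathbf{d}^T(\mathbf{I}-\mathbf{Q}) \ge \mathbf{0}^T$. Then I would multiply this inequality on the right by $\mathbf{N}$. Since $\mathbf{N}=\sum_{t=0}^{+\infty}\mathbf{Q}^t$ is entrywise non-negative (because $\mathbf{Q}$ has non-negative entries and the series converges), right-multiplication by $\mathbf{N}$ preserves a coordinatewise $\ge$ between row vectors, giving $\mathbf{1}^T\mathbf{N} - \mathbf{d}^T(\mathbf{I}-\mathbf{Q})\mathbf{N} \ge \mathbf{0}^T$. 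Using $(\mathbf{I}-\mathbf{Q})\mathbf{N}=\mathbf{I}$, this simplifies to $\mathbf{1}^T\mathbf{N} - \mathbf{d}^T \ge \mathbf{0}^T$, i.e. $\mathbf{s}^T \ge \mathbf{d}^T$. Reading off coordinates yields $s(X)\ge d(X)$ for every non-optimal $X$, which is the claim.

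The only delicate point — and the single place where the direction of the argument could be misapplied — is making sure the inequality survives right-multiplication by $\mathbf{N}$, which is precisely where non-negativity of $\mathbf{N}$ is used: if $\mathbf{u}^T\ge\mathbf{0}^T$ and $N(i,j)\ge 0$ for all $i,j$, then $(\mathbf{u}^T\mathbf{N})_j=\sum_i u_i\,N(i,j)\ge 0$. This fact was already invoked in the proof of Theorem~\ref{theBackwardUpperBound}, so no new ingredient is needed; everything else is the routine cancellation $(\mathbf{I}-\mathbf{Q})\mathbf{N}=\mathbf{I}$. Because the proof is essentially identical to that of Theorem~\ref{theBackwardUpperBound} with the inequality flipped, I would present it briefly rather than in full detail.
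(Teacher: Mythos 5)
Your proposal is correct and is exactly the argument the paper intends: the paper omits this proof, noting only that it follows as the previous backward drift theorem, and your mirrored derivation ($\mathbf{1}^T - \mathbf{d}^T(\mathbf{I}-\mathbf{Q}) \ge \mathbf{0}^T$, right-multiplied by the non-negative fundamental matrix $\mathbf{N}$) is precisely that dual argument.
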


Backward drift analysis provides an alternative way of bounding the   expected hitting time  when the initial population is chosen at uniformly random. 
It is possible to establish average backward drift analysis similar to  average forward drift analysis. We will not discuss it in the detail.

\subsection{Case Studies}

In this subsection, we demonstrate how average drift analysis and backward drift analysis are applied to the estimation of the expected hitting time. 
There are three steps when applying drift analysis. First, choose an appropriate  drift function; then   estimate  drift; finally,  obtain  a bound  on the expected hitting time or staying time.

The first example is to show that average drift theorems work well but point-wise drift theorems are not applicable. 
\begin{example}
Consider RSH-I  for the maximizing problem    
$$\max \, x^2,   \quad x \in \{ 0,1, \cdots, 100 \}.$$ 
 
 Choose the drift function as  follows
\begin{equation*}
d(x)=\left\{
\begin{array}{lll}
\frac{100 \times 101}{99}(100-x), &\mbox{if } 1 \le x \le 99,\\
d(x+1), &\mbox{ if } x=0.  
\end{array}
\right.
\end{equation*}

Calculate the   drift $\Delta(x)$. For $1 \le x \le 99,$
\begin{align*}
\Delta(x)=&   d(x) -0.99 d(x)-0.01 d(x+1)=\frac{101}{99},\\
\Delta(0)=& d(0) -0.99 d(0)-0.01 d(1) =0.
\end{align*}
Notice that  $\Delta(0)= 0$, thus point-wise drift theorems  cannot be applied here.

However, average drift theorems  work well. Assume that the initial population $\Phi_0$ is chosen at uniformly  random, that is, $P(\Phi_0=x)=1/101$.    The average  forward drift  is  
\begin{align*}
  \bar{\Delta}_0 = \left(\frac{1}{101} \Delta(0) +  \frac{1}{101} \sum^{99}_{x=0}  \Delta(x)\right)  =1. 
\end{align*} 

Since RSH-I adopts elitist selection, so that if initial population $\Phi_0=x$ is not at state $0$, then for any $t \ge 0$, its offspring $\Phi_t$ never returns to state $0$. Thus  the average  forward drift  is not less than 
\begin{align*}
 \bar{\Delta}_t=  \left(\frac{1}{101} \Delta(0) +  \frac{1}{101} \sum^{99}_{x=0}  \Delta(x)\right)  =1. 
\end{align*}

According to Theorem~\ref{theDriftUpperBound},  the     expected hitting time  is not more than
$$
 \frac{1}{101} \sum^{99}_{x=0} d(x)= 5000.
$$
\end{example}

The second example is to show that we can derive the same conclusion using backward drift analysis.
 
\begin{example}
Still consider RSH-I  for the maximizing problem,   
$$\max \, x^2,   \quad x \in \{ 0,1, \cdots, 100 \}.$$

Choose the drift function as follow
$$d(x) =100(x+1) , \quad 0\le x \le 99,$$  

Calculate the  backward drift $\nabla(x)$. For $ 1 \le x \le 99,$
\begin{align*}
\nabla(x)=  &d(x)-0.99d(x)-0.01 d(x-1) =1, \\
\nabla(0)=  
  &d(0)-0.99d(0)=1.
\end{align*}

According to  Theorems~\ref{theBackwardUpperBound} and \ref{theBackwardLowerBound}, the staying time 
$$s(x)=d(x)=100(x+1).$$

Furthermore  if the initial population is chosen at uniformly random, then according to Theorem~\ref{theForwardBackward}, the   expected hitting time  equals to
\begin{align*}
\frac{1}{101} \sum^{99}_{x=0}s(x)=\frac{1}{101} \sum^{99}_{x=0} 100(x+1)=5000.
\end{align*}

The result is the same as that in the first example by applying average drift analysis.
\end{example}

\subsection{Computational Study}
In this subsection, we illustrate a computational approach to the calculation of the expected hitting time.

In the computational study,    we runs a randomised search heuristic for $k$ times. Let $\tau_i (X)$  be the  first hitting time for the $i$-th run during these $k$ runs.

  From the law of large numbers, the expected hitting time $h(x)$ is approximated by the average
\begin{align}
 \frac{1}{k}  \sum^k_{i=1} \tau_i (X), \mbox{ when } k \to +\infty.
\end{align}
The above average value is taken as the expected hitting time $h(X)$ in the computational study.

Sometimes it is easy to study the expected hitting time  through the computational study. 
\begin{example} 
Consider RSH-I and RSH-II for solving the maximization problem    
\begin{align*}
 \max \,  x^2, \qquad  {x \in \{ 0,1, \cdots, 100 \}}. 
\end{align*}   

We run each algorithm for 100,000 times. The initial population is $\Phi_0=20$.  Figure \ref{f1a-time} shows  that the expected hitting time of RSH-II is about $16000$, which  is twice as long as that of RSH-I, about $8000$.  

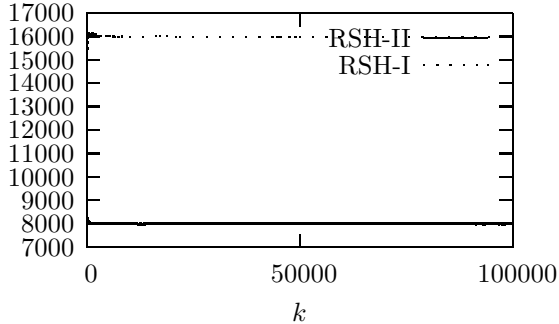
\begin{figure}[ht]
\begin{center}
\setlength{\unitlength}{0.240900pt}
\ifx\plotpoint\undefined\newsavebox{\plotpoint}\fi
\begin{picture}(900,540)(0,0)
\sbox{\plotpoint}{\rule[-0.200pt]{0.400pt}{0.400pt}}%
\put(170.0,131.0){\rule[-0.200pt]{4.818pt}{0.400pt}}
\put(150,131){\makebox(0,0)[r]{ 7000}}
\put(819.0,131.0){\rule[-0.200pt]{4.818pt}{0.400pt}}
\put(170.0,168.0){\rule[-0.200pt]{4.818pt}{0.400pt}}
\put(150,168){\makebox(0,0)[r]{ 8000}}
\put(819.0,168.0){\rule[-0.200pt]{4.818pt}{0.400pt}}
\put(170.0,205.0){\rule[-0.200pt]{4.818pt}{0.400pt}}
\put(150,205){\makebox(0,0)[r]{ 9000}}
\put(819.0,205.0){\rule[-0.200pt]{4.818pt}{0.400pt}}
\put(170.0,241.0){\rule[-0.200pt]{4.818pt}{0.400pt}}
\put(150,241){\makebox(0,0)[r]{ 10000}}
\put(819.0,241.0){\rule[-0.200pt]{4.818pt}{0.400pt}}
\put(170.0,278.0){\rule[-0.200pt]{4.818pt}{0.400pt}}
\put(150,278){\makebox(0,0)[r]{ 11000}}
\put(819.0,278.0){\rule[-0.200pt]{4.818pt}{0.400pt}}
\put(170.0,315.0){\rule[-0.200pt]{4.818pt}{0.400pt}}
\put(150,315){\makebox(0,0)[r]{ 12000}}
\put(819.0,315.0){\rule[-0.200pt]{4.818pt}{0.400pt}}
\put(170.0,352.0){\rule[-0.200pt]{4.818pt}{0.400pt}}
\put(150,352){\makebox(0,0)[r]{ 13000}}
\put(819.0,352.0){\rule[-0.200pt]{4.818pt}{0.400pt}}
\put(170.0,389.0){\rule[-0.200pt]{4.818pt}{0.400pt}}
\put(150,389){\makebox(0,0)[r]{ 14000}}
\put(819.0,389.0){\rule[-0.200pt]{4.818pt}{0.400pt}}
\put(170.0,425.0){\rule[-0.200pt]{4.818pt}{0.400pt}}
\put(150,425){\makebox(0,0)[r]{ 15000}}
\put(819.0,425.0){\rule[-0.200pt]{4.818pt}{0.400pt}}
\put(170.0,462.0){\rule[-0.200pt]{4.818pt}{0.400pt}}
\put(150,462){\makebox(0,0)[r]{ 16000}}
\put(819.0,462.0){\rule[-0.200pt]{4.818pt}{0.400pt}}
\put(170.0,499.0){\rule[-0.200pt]{4.818pt}{0.400pt}}
\put(150,499){\makebox(0,0)[r]{ 17000}}
\put(819.0,499.0){\rule[-0.200pt]{4.818pt}{0.400pt}}
\put(170.0,131.0){\rule[-0.200pt]{0.400pt}{4.818pt}}
\put(170,90){\makebox(0,0){ 0}}
\put(170.0,479.0){\rule[-0.200pt]{0.400pt}{4.818pt}}
\put(504.0,131.0){\rule[-0.200pt]{0.400pt}{4.818pt}}
\put(504,90){\makebox(0,0){ 50000}}
\put(504.0,479.0){\rule[-0.200pt]{0.400pt}{4.818pt}}
\put(839.0,131.0){\rule[-0.200pt]{0.400pt}{4.818pt}}
\put(839,90){\makebox(0,0){ 100000}}
\put(839.0,479.0){\rule[-0.200pt]{0.400pt}{4.818pt}}
\put(170.0,131.0){\rule[-0.200pt]{0.400pt}{88.651pt}}
\put(170.0,131.0){\rule[-0.200pt]{161.162pt}{0.400pt}}
\put(839.0,131.0){\rule[-0.200pt]{0.400pt}{88.651pt}}
\put(170.0,499.0){\rule[-0.200pt]{161.162pt}{0.400pt}}
\put(504,29){\makebox(0,0){$k$}}
\put(679,459){\makebox(0,0)[r]{RSH-II}}
\put(699.0,459.0){\rule[-0.200pt]{24.090pt}{0.400pt}}
\put(170,159){\usebox{\plotpoint}}
\put(170.0,159.0){\rule[-0.200pt]{0.400pt}{1.927pt}}
\put(170.0,140.0){\rule[-0.200pt]{0.400pt}{6.504pt}}
\put(170.0,140.0){\rule[-0.200pt]{0.400pt}{7.950pt}}
\put(170.0,169.0){\rule[-0.200pt]{0.400pt}{0.964pt}}
\put(170.0,169.0){\rule[-0.200pt]{0.400pt}{0.482pt}}
\put(170.0,168.0){\rule[-0.200pt]{0.400pt}{0.723pt}}
\put(170.0,168.0){\rule[-0.200pt]{0.400pt}{0.482pt}}
\put(170.0,169.0){\usebox{\plotpoint}}
\put(170.0,169.0){\rule[-0.200pt]{0.400pt}{1.927pt}}
\put(170.0,175.0){\rule[-0.200pt]{0.400pt}{0.482pt}}
\put(170.0,175.0){\usebox{\plotpoint}}
\put(170.0,174.0){\rule[-0.200pt]{0.400pt}{0.482pt}}
\put(170.0,174.0){\usebox{\plotpoint}}
\put(170.0,172.0){\rule[-0.200pt]{0.400pt}{0.723pt}}
\put(170.0,172.0){\rule[-0.200pt]{0.400pt}{1.445pt}}
\put(170.0,177.0){\usebox{\plotpoint}}
\put(170.0,177.0){\rule[-0.200pt]{0.400pt}{0.482pt}}
\put(170.0,175.0){\rule[-0.200pt]{0.400pt}{0.964pt}}
\put(170.0,175.0){\rule[-0.200pt]{0.400pt}{0.723pt}}
\put(170.0,176.0){\rule[-0.200pt]{0.400pt}{0.482pt}}
\put(170.0,176.0){\rule[-0.200pt]{0.400pt}{0.964pt}}
\put(170,176.67){\rule{0.241pt}{0.400pt}}
\multiput(170.00,177.17)(0.500,-1.000){2}{\rule{0.120pt}{0.400pt}}
\put(170.0,178.0){\rule[-0.200pt]{0.400pt}{0.482pt}}
\put(171,177){\usebox{\plotpoint}}
\put(171,177){\usebox{\plotpoint}}
\put(171.0,174.0){\rule[-0.200pt]{0.400pt}{0.723pt}}
\put(171.0,174.0){\usebox{\plotpoint}}
\put(171.0,174.0){\usebox{\plotpoint}}
\put(171.0,174.0){\rule[-0.200pt]{0.400pt}{0.482pt}}
\put(171.0,175.0){\usebox{\plotpoint}}
\put(171.0,175.0){\rule[-0.200pt]{0.400pt}{0.482pt}}
\put(171.0,175.0){\rule[-0.200pt]{0.400pt}{0.482pt}}
\put(171.0,175.0){\usebox{\plotpoint}}
\put(171.0,173.0){\rule[-0.200pt]{0.400pt}{0.723pt}}
\put(171.0,173.0){\usebox{\plotpoint}}
\put(171.0,173.0){\usebox{\plotpoint}}
\put(171.0,173.0){\usebox{\plotpoint}}
\put(171.0,172.0){\rule[-0.200pt]{0.400pt}{0.482pt}}
\put(171.0,172.0){\usebox{\plotpoint}}
\put(171.0,172.0){\usebox{\plotpoint}}
\put(171.0,172.0){\usebox{\plotpoint}}
\put(171.0,172.0){\usebox{\plotpoint}}
\put(171,171.67){\rule{0.241pt}{0.400pt}}
\multiput(171.00,172.17)(0.500,-1.000){2}{\rule{0.120pt}{0.400pt}}
\put(171.0,172.0){\usebox{\plotpoint}}
\put(172.0,172.0){\usebox{\plotpoint}}
\put(172.0,172.0){\usebox{\plotpoint}}
\put(172.0,172.0){\usebox{\plotpoint}}
\put(172.0,172.0){\usebox{\plotpoint}}
\put(172.0,172.0){\usebox{\plotpoint}}
\put(172.0,171.0){\rule[-0.200pt]{0.400pt}{0.482pt}}
\put(172.0,171.0){\rule[-0.200pt]{0.400pt}{0.482pt}}
\put(172.0,172.0){\usebox{\plotpoint}}
\put(172.0,172.0){\usebox{\plotpoint}}
\put(172.0,172.0){\usebox{\plotpoint}}
\put(172.0,172.0){\usebox{\plotpoint}}
\put(172.0,172.0){\usebox{\plotpoint}}
\put(172.0,172.0){\usebox{\plotpoint}}
\put(172.0,172.0){\usebox{\plotpoint}}
\put(172.0,172.0){\usebox{\plotpoint}}
\put(172.0,171.0){\rule[-0.200pt]{0.400pt}{0.482pt}}
\put(172.0,171.0){\usebox{\plotpoint}}
\put(172.0,171.0){\usebox{\plotpoint}}
\put(172.0,171.0){\rule[-0.200pt]{0.482pt}{0.400pt}}
\put(174.0,170.0){\usebox{\plotpoint}}
\put(174.0,170.0){\usebox{\plotpoint}}
\put(174.0,169.0){\rule[-0.200pt]{0.400pt}{0.482pt}}
\put(174.0,169.0){\usebox{\plotpoint}}
\put(175.0,169.0){\usebox{\plotpoint}}
\put(175.0,169.0){\usebox{\plotpoint}}
\put(175.0,169.0){\usebox{\plotpoint}}
\put(175.0,169.0){\usebox{\plotpoint}}
\put(175.0,169.0){\usebox{\plotpoint}}
\put(176.0,168.0){\usebox{\plotpoint}}
\put(176.0,168.0){\usebox{\plotpoint}}
\put(176.0,168.0){\usebox{\plotpoint}}
\put(176.0,168.0){\usebox{\plotpoint}}
\put(176.0,169.0){\rule[-0.200pt]{0.482pt}{0.400pt}}
\put(178.0,168.0){\usebox{\plotpoint}}
\put(178.0,168.0){\usebox{\plotpoint}}
\put(178.0,168.0){\usebox{\plotpoint}}
\put(178.0,168.0){\usebox{\plotpoint}}
\put(178.0,169.0){\usebox{\plotpoint}}
\put(179.0,168.0){\usebox{\plotpoint}}
\put(179.0,168.0){\usebox{\plotpoint}}
\put(179.0,168.0){\usebox{\plotpoint}}
\put(179.0,168.0){\rule[-0.200pt]{0.723pt}{0.400pt}}
\put(182.0,168.0){\usebox{\plotpoint}}
\put(182.0,168.0){\usebox{\plotpoint}}
\put(182.0,168.0){\usebox{\plotpoint}}
\put(182.0,168.0){\usebox{\plotpoint}}
\put(182.0,168.0){\rule[-0.200pt]{16.381pt}{0.400pt}}
\put(250.0,167.0){\usebox{\plotpoint}}
\put(250.0,167.0){\usebox{\plotpoint}}
\put(251.0,167.0){\usebox{\plotpoint}}
\put(251.0,167.0){\usebox{\plotpoint}}
\put(251.0,167.0){\usebox{\plotpoint}}
\put(251.0,167.0){\usebox{\plotpoint}}
\put(251.0,167.0){\usebox{\plotpoint}}
\put(251.0,167.0){\usebox{\plotpoint}}
\put(251.0,167.0){\usebox{\plotpoint}}
\put(251.0,168.0){\usebox{\plotpoint}}
\put(252.0,167.0){\usebox{\plotpoint}}
\put(252.0,167.0){\usebox{\plotpoint}}
\put(252.0,167.0){\usebox{\plotpoint}}
\put(252.0,167.0){\usebox{\plotpoint}}
\put(252.0,167.0){\usebox{\plotpoint}}
\put(252.0,167.0){\usebox{\plotpoint}}
\put(252.0,167.0){\usebox{\plotpoint}}
\put(252.0,167.0){\usebox{\plotpoint}}
\put(253.0,167.0){\usebox{\plotpoint}}
\put(253.0,168.0){\rule[-0.200pt]{0.723pt}{0.400pt}}
\put(256.0,167.0){\usebox{\plotpoint}}
\put(256.0,167.0){\usebox{\plotpoint}}
\put(256.0,167.0){\usebox{\plotpoint}}
\put(256.0,167.0){\usebox{\plotpoint}}
\put(256.0,167.0){\usebox{\plotpoint}}
\put(256.0,167.0){\usebox{\plotpoint}}
\put(256.0,168.0){\usebox{\plotpoint}}
\put(257.0,167.0){\usebox{\plotpoint}}
\put(257.0,167.0){\usebox{\plotpoint}}
\put(257.0,167.0){\usebox{\plotpoint}}
\put(257.0,167.0){\usebox{\plotpoint}}
\put(257.0,167.0){\usebox{\plotpoint}}
\put(257.0,167.0){\usebox{\plotpoint}}
\put(257.0,167.0){\usebox{\plotpoint}}
\put(257.0,167.0){\usebox{\plotpoint}}
\put(257.0,167.0){\usebox{\plotpoint}}
\put(257.0,167.0){\usebox{\plotpoint}}
\put(257.0,167.0){\usebox{\plotpoint}}
\put(257.0,167.0){\usebox{\plotpoint}}
\put(258.0,167.0){\usebox{\plotpoint}}
\put(258.0,167.0){\usebox{\plotpoint}}
\put(258.0,167.0){\usebox{\plotpoint}}
\put(258.0,167.0){\usebox{\plotpoint}}
\put(258.0,167.0){\usebox{\plotpoint}}
\put(258.0,167.0){\usebox{\plotpoint}}
\put(258.0,167.0){\usebox{\plotpoint}}
\put(258.0,167.0){\usebox{\plotpoint}}
\put(258.0,167.0){\usebox{\plotpoint}}
\put(258.0,167.0){\usebox{\plotpoint}}
\put(258.0,167.0){\usebox{\plotpoint}}
\put(258.0,168.0){\usebox{\plotpoint}}
\put(259.0,167.0){\usebox{\plotpoint}}
\put(259.0,167.0){\usebox{\plotpoint}}
\put(259.0,167.0){\usebox{\plotpoint}}
\put(259.0,167.0){\usebox{\plotpoint}}
\put(259.0,167.0){\usebox{\plotpoint}}
\put(259.0,167.0){\usebox{\plotpoint}}
\put(259.0,167.0){\usebox{\plotpoint}}
\put(259.0,167.0){\usebox{\plotpoint}}
\put(259.0,167.0){\usebox{\plotpoint}}
\put(259.0,167.0){\usebox{\plotpoint}}
\put(259.0,167.0){\usebox{\plotpoint}}
\put(259.0,167.0){\usebox{\plotpoint}}
\put(259.0,167.0){\usebox{\plotpoint}}
\put(259.0,167.0){\usebox{\plotpoint}}
\put(259.0,167.0){\usebox{\plotpoint}}
\put(259.0,167.0){\usebox{\plotpoint}}
\put(259,166.67){\rule{0.241pt}{0.400pt}}
\multiput(259.00,166.17)(0.500,1.000){2}{\rule{0.120pt}{0.400pt}}
\put(259.0,167.0){\usebox{\plotpoint}}
\put(260,168){\usebox{\plotpoint}}
\put(260,168){\usebox{\plotpoint}}
\put(260,168){\usebox{\plotpoint}}
\put(260,168){\usebox{\plotpoint}}
\put(260,168){\usebox{\plotpoint}}
\put(260.0,167.0){\usebox{\plotpoint}}
\put(260.0,167.0){\usebox{\plotpoint}}
\put(260.0,167.0){\usebox{\plotpoint}}
\put(260.0,167.0){\usebox{\plotpoint}}
\put(260.0,168.0){\rule[-0.200pt]{125.268pt}{0.400pt}}
\put(780.0,167.0){\usebox{\plotpoint}}
\put(780.0,167.0){\usebox{\plotpoint}}
\put(780.0,167.0){\usebox{\plotpoint}}
\put(780.0,167.0){\usebox{\plotpoint}}
\put(780.0,167.0){\usebox{\plotpoint}}
\put(780.0,167.0){\usebox{\plotpoint}}
\put(780.0,168.0){\rule[-0.200pt]{2.891pt}{0.400pt}}
\put(792.0,167.0){\usebox{\plotpoint}}
\put(792.0,167.0){\usebox{\plotpoint}}
\put(792.0,168.0){\usebox{\plotpoint}}
\put(793.0,167.0){\usebox{\plotpoint}}
\put(793.0,167.0){\usebox{\plotpoint}}
\put(793.0,167.0){\usebox{\plotpoint}}
\put(793.0,167.0){\usebox{\plotpoint}}
\put(793.0,167.0){\usebox{\plotpoint}}
\put(793.0,167.0){\usebox{\plotpoint}}
\put(793.0,167.0){\usebox{\plotpoint}}
\put(793.0,167.0){\usebox{\plotpoint}}
\put(794.0,167.0){\usebox{\plotpoint}}
\put(794.0,168.0){\rule[-0.200pt]{5.541pt}{0.400pt}}
\put(817.0,167.0){\usebox{\plotpoint}}
\put(817.0,167.0){\usebox{\plotpoint}}
\put(817.0,167.0){\usebox{\plotpoint}}
\put(817.0,167.0){\usebox{\plotpoint}}
\put(817.0,167.0){\usebox{\plotpoint}}
\put(817.0,167.0){\rule[-0.200pt]{1.204pt}{0.400pt}}
\put(822.0,167.0){\usebox{\plotpoint}}
\put(822.0,167.0){\usebox{\plotpoint}}
\put(822.0,167.0){\usebox{\plotpoint}}
\put(822.0,168.0){\rule[-0.200pt]{0.482pt}{0.400pt}}
\put(824.0,167.0){\usebox{\plotpoint}}
\put(824.0,167.0){\usebox{\plotpoint}}
\put(824.0,167.0){\usebox{\plotpoint}}
\put(824.0,167.0){\usebox{\plotpoint}}
\put(824.0,167.0){\usebox{\plotpoint}}
\put(824.0,167.0){\usebox{\plotpoint}}
\put(824.0,167.0){\usebox{\plotpoint}}
\put(824.0,167.0){\usebox{\plotpoint}}
\put(824.0,167.0){\usebox{\plotpoint}}
\put(824.0,167.0){\usebox{\plotpoint}}
\put(824.0,167.0){\usebox{\plotpoint}}
\put(824.0,167.0){\usebox{\plotpoint}}
\put(825.0,167.0){\usebox{\plotpoint}}
\put(825.0,167.0){\usebox{\plotpoint}}
\put(825.0,167.0){\usebox{\plotpoint}}
\put(825.0,167.0){\usebox{\plotpoint}}
\put(825.0,167.0){\usebox{\plotpoint}}
\put(825.0,167.0){\usebox{\plotpoint}}
\put(825.0,167.0){\usebox{\plotpoint}}
\put(825.0,167.0){\usebox{\plotpoint}}
\put(825.0,167.0){\usebox{\plotpoint}}
\put(825.0,167.0){\usebox{\plotpoint}}
\put(825.0,167.0){\usebox{\plotpoint}}
\put(825.0,167.0){\usebox{\plotpoint}}
\put(825.0,167.0){\usebox{\plotpoint}}
\put(826.0,167.0){\usebox{\plotpoint}}
\put(826.0,167.0){\usebox{\plotpoint}}
\put(826.0,167.0){\usebox{\plotpoint}}
\put(826.0,167.0){\usebox{\plotpoint}}
\put(826.0,167.0){\usebox{\plotpoint}}
\put(826.0,168.0){\rule[-0.200pt]{3.132pt}{0.400pt}}
\put(679,418){\makebox(0,0)[r]{RSH-I}}
\multiput(699,418)(20.756,0.000){5}{\usebox{\plotpoint}}
\put(799,418){\usebox{\plotpoint}}
\put(170,458){\usebox{\plotpoint}}
\multiput(170,458)(0.000,-20.756){4}{\usebox{\plotpoint}}
\multiput(170,406)(0.000,20.756){2}{\usebox{\plotpoint}}
\put(170.00,431.47){\usebox{\plotpoint}}
\put(170.00,410.71){\usebox{\plotpoint}}
\put(170.00,392.04){\usebox{\plotpoint}}
\put(170.00,406.80){\usebox{\plotpoint}}
\put(170.00,427.55){\usebox{\plotpoint}}
\put(170.00,437.69){\usebox{\plotpoint}}
\put(170.00,429.07){\usebox{\plotpoint}}
\put(170.00,421.82){\usebox{\plotpoint}}
\put(170.00,424.58){\usebox{\plotpoint}}
\put(170.00,435.33){\usebox{\plotpoint}}
\put(170.00,440.09){\usebox{\plotpoint}}
\put(170.00,444.84){\usebox{\plotpoint}}
\put(170.00,444.40){\usebox{\plotpoint}}
\put(171.00,443.35){\usebox{\plotpoint}}
\put(171.00,452.11){\usebox{\plotpoint}}
\put(171.00,453.13){\usebox{\plotpoint}}
\put(171.00,460.38){\usebox{\plotpoint}}
\put(172.00,463.38){\usebox{\plotpoint}}
\put(172.00,463.87){\usebox{\plotpoint}}
\put(173.00,467.89){\usebox{\plotpoint}}
\put(173.00,465.36){\usebox{\plotpoint}}
\put(174.00,465.60){\usebox{\plotpoint}}
\put(175.00,466.15){\usebox{\plotpoint}}
\put(176.00,464.09){\usebox{\plotpoint}}
\put(178.00,466.66){\usebox{\plotpoint}}
\put(179.00,465.58){\usebox{\plotpoint}}
\put(180.00,464.18){\usebox{\plotpoint}}
\put(182.00,465.07){\usebox{\plotpoint}}
\put(184.00,465.69){\usebox{\plotpoint}}
\put(191.44,463.00){\usebox{\plotpoint}}
\put(200.20,463.00){\usebox{\plotpoint}}
\put(205.00,462.95){\usebox{\plotpoint}}
\put(206.00,462.71){\usebox{\plotpoint}}
\put(211.00,462.46){\usebox{\plotpoint}}
\put(214.22,462.00){\usebox{\plotpoint}}
\put(219.00,462.02){\usebox{\plotpoint}}
\put(224.00,462.27){\usebox{\plotpoint}}
\put(225.00,462.51){\usebox{\plotpoint}}
\put(237.24,462.00){\usebox{\plotpoint}}
\put(258.00,462.00){\usebox{\plotpoint}}
\put(278.00,462.75){\usebox{\plotpoint}}
\put(284.51,463.00){\usebox{\plotpoint}}
\put(295.26,463.00){\usebox{\plotpoint}}
\put(309.00,462.02){\usebox{\plotpoint}}
\put(315.77,462.00){\usebox{\plotpoint}}
\put(336.53,462.00){\usebox{\plotpoint}}
\put(357.29,462.00){\usebox{\plotpoint}}
\put(378.04,462.00){\usebox{\plotpoint}}
\put(398.80,462.00){\usebox{\plotpoint}}
\put(419.55,462.00){\usebox{\plotpoint}}
\put(440.31,462.00){\usebox{\plotpoint}}
\put(461.00,461.94){\usebox{\plotpoint}}
\put(468.00,461.82){\usebox{\plotpoint}}
\put(472.00,461.43){\usebox{\plotpoint}}
\put(473.33,462.00){\usebox{\plotpoint}}
\put(487.67,461.00){\usebox{\plotpoint}}
\put(496.00,461.43){\usebox{\plotpoint}}
\put(498.00,461.82){\usebox{\plotpoint}}
\put(503.94,461.00){\usebox{\plotpoint}}
\put(524.69,461.00){\usebox{\plotpoint}}
\put(545.45,461.00){\usebox{\plotpoint}}
\put(566.20,461.00){\usebox{\plotpoint}}
\put(586.96,461.00){\usebox{\plotpoint}}
\put(605.71,461.00){\usebox{\plotpoint}}
\put(613.47,462.00){\usebox{\plotpoint}}
\put(634.23,462.00){\usebox{\plotpoint}}
\put(654.98,462.00){\usebox{\plotpoint}}
\put(675.74,462.00){\usebox{\plotpoint}}
\put(696.49,462.00){\usebox{\plotpoint}}
\put(717.25,462.00){\usebox{\plotpoint}}
\put(738.00,462.00){\usebox{\plotpoint}}
\put(758.76,462.00){\usebox{\plotpoint}}
\put(779.51,462.00){\usebox{\plotpoint}}
\put(800.27,462.00){\usebox{\plotpoint}}
\put(821.03,462.00){\usebox{\plotpoint}}
\put(839,462){\usebox{\plotpoint}}
\put(170.0,131.0){\rule[-0.200pt]{0.400pt}{88.651pt}}
\put(170.0,131.0){\rule[-0.200pt]{161.162pt}{0.400pt}}
\put(839.0,131.0){\rule[-0.200pt]{0.400pt}{88.651pt}}
\put(170.0,499.0){\rule[-0.200pt]{161.162pt}{0.400pt}}
\end{picture}
\end{center} 
\caption{Expected hitting times of RSH-I and RSH-II for maximising $f_2(x)$. $k$ is the number of runs.}
\label{f1a-time}
\end{figure} 
\end{example}

However, sometimes the calculation of the expected hitting time needs an extremely long computation time.
 
\begin{example} 
Consider  RSH-II for solving the maximization problem    
\begin{align*}
 \max \,  (x-49)^2, \qquad  {x \in \{ 0,1, \cdots, 100 \}}. 
\end{align*}   

We run each algorithm for 100,000 times. We found that RSH-II always got stuck at the local optimum $0$. Even for the simple problem, it seems not easy to make a computational study of the expected hitting time.
\end{example}

\section{Conclusions and Future Work}
\label{secConclusion}
A unified Markov chain approach has been proposed for studying the convergence, convergence rate and   expected hitting time  of randomised search heuristics in this paper. The core of the approach is to  model  randomised search heuristics by   absorbing Markov chains and  then to study the chain based on matrix iteration analysis. A novelty in the analysis is that  the vector 1-norm is used to represent the probability of a population in the non-optimal solution set. It plays the role of the distance between a population and the optimal solution set. All theoretical results are proven in a unified manner. 

The results of the paper are summarised as follow:  First, Theoreom~\ref{theConvergence} establishes a  sufficient and necessary condition of convergence in distribution. The theorem states a randomised search heuristic is convergent if and only if the algorithm can make an improvement  in  finite iterations suppose  the current solution is not optimal. 

Then    the   average convergence rate is introduced, which  refers to the average reduction factor of the probability of a population in the non-optimal set per iteration in terms of  the   logarithmic mean.    Theorems~\ref{theRateLowerBound} and \ref{theRateUpperBound} provide   lower and upper bounds on the average convergence rate. 

Finally, two new types of drift analysis, average drift analysis and backward drift analysis,  are proposed for analysing the expected hitting time. Theorems~\ref{theDriftUpperBound} and \ref{theDriftLowerBound} state that the expected hitting time can be bounded by a drift function and related average drift.     
Theorem~\ref{theForwardBackward} reveals that the expected hitting time and expected staying time are equal if the initial population is chosen at uniformly random.  Theorems~\ref{theBackwardUpperBound} and \ref{theBackwardLowerBound} state that  the expected staying time can be bounded by a drift function and related backward drift.

Besides the theoretical study, computation approaches are also presented to study the convergence, average convergence rate and expected hitting time. 

Comparing computational  and theoretical studies, we see none of them are perfect. The computational study may provide an intuitive description of randomised search heuristics' behaviour, but it belongs to a posterior and case study.  
 On  the other hand, the theoretical study belongs to a prior and general study. Theoretical results may provide some understanding of randomised search heuristics' ability.   But it is still too hard  to obtain an exact value of the expected hitting time or average convergence rate. 
 
 There is a   gap between the theoretical and computational studies. Our future work is to improve  theoretical tools, to apply them to different types of randomised search heuristics and  to study their convergence, convergence rate and expected hitting time.  

\end{document}